\DeclarePairedDelimiter{\ceil}{\lceil}{\rceil}
\newtheorem{theorem}{Theorem}[section]
\newtheorem{corollary}[theorem]{Corollary}
\newtheorem{proposition}[theorem]{Proposition}
\newtheorem{lemma}[theorem]{Lemma}
\newtheorem{convention}[theorem]{Convention}
\newtheorem*{theoremportrait}{Theorem~\ref{thm:word_from_portrait}}
\theoremstyle{definition}
\newtheorem{definition}[theorem]{Definition}
\newcommand{\A}{\mathcal A}
\newcommand{\B}{\mathcal{B}}
\newcommand{\G}{\mathcal G}
\newcommand{\N}{\mathcal{N}}
\newcommand{\Z}{\mathbb Z}
\newcommand{\Aut}{\mathop{\rm Aut}\nolimits}
\newcommand{\Sym}{\mathop{\rm Sym}\nolimits}
\DeclareMathOperator{\St}{St}
\pgfplotsset{compat=1.18}
\tikzset{
        ->,  
        >=stealth',
        node distance=3cm, 
        every state/.style={thick, fill=gray!10}, 
        initial text=$ $,
        }
\newcommand*\circled[2][1.6]{\tikz[baseline=(char.base)]{
    \node[shape=circle, draw, inner sep=1pt, 
        minimum height={\f@size*#1},] (char) {\vphantom{WAH1g}#2};}}
\begin{document}

\title{Contracting Self-Similar Groups in Group-Based Cryptography}

\author[1]{Delaram Kahrobaei}
\author[2]{Arsalan Akram Malik}
\author[3]{Dmytro Savchuk}

\affil[1]{Computer Science and Mathematics Departments\\
        The City University of New York, Queens College, CUNY\\
        The Initiative for the Theoretical Sciences, CUNY Graduate Center\\
        Department of Computer Science, University of York (UK)\\
        Department of Computer Science and Engineering, Tandon School of Engineering, New York University\\
        \href{mailto:Delaram.Kahrobaei@qc.cuny.edu}{Delaram.Kahrobaei@qc.cuny.edu}}
\affil[2]{Department of Mathematical Sciences\\
        Indiana University South Bend\\
        1700 Mishawaka Ave\\
        South Bend, IN 46615\\
        \href{mailto:malikaa@iu.edu}{malikaa@iu.edu}}
\affil[3]{Department of Mathematics and Statistics\\
        University of South Florida\\
        4202 E Fowler Ave\\
        Tampa, FL 33620-5700\\
        \href{mailto:savchuk@usf.edu}{savchuk@usf.edu}}

\date{}

\maketitle
\begin{abstract}
We propose self-similar contracting groups as a platform for cryptographic schemes based on simultaneous conjugacy search problem (SCSP). The class of these groups contains extraordinary examples like Grigorchuk group, which is known to be non-linear, thus making some of existing attacks against SCSP inapplicable. The groups in this class admit a natural normal form based on the notion of a nucleus portrait, that plays a key role in our approach. While for some groups in the class the conjugacy search problem has been studied, there are many groups for which no algorithms solving it are known. Moreover, there are some self-similar groups with undecidable conjugacy problem. We discuss benefits and drawbacks of using these groups in group-based cryptography and provide computational analysis of variants of the length-based attack on SCSP for some groups in the class, including Grigorchuk group, Basilica group, and others.
\end{abstract}

\tableofcontents

\section{Introduction}

Group-based cryptography is a relatively new area in post-quantum cryptography that started to develop in 1990's and rapidly grew in the recent 20 years. Several sources covering main developments in the area include~\cite{myasnikov_su:group-based_cryptography08,myasnikov_su:non-commutative_crypto_book11,diekert:discrete_algebraic_methods16,kahrobaeinotices, Kahrobaei-BattarbeeBook}.

\begin{comment}
\textcolor{red}{*******START*******}

\textcolor{red}{Group based cryptography references - non CSP}
1. Decision problem in cryptography \cite{myasnikov2008decision} \cite{shpilrain2009zapata-decision}

\textcolor{red}{Group based cryptography references - CSP}
2. Subgroup membership search problem - suggest free matabilian groups \cite{shpilrain2006zapata}
3. Stickel key exchange \cite{stickel2005new} - inspired by DH - suggested platform is group of invertible matrices over a finite field - vulnerable to linear algebra attack because of platform choice.
4. Decomposition problem is suggested using Thompson groups as platform \cite{shpilrain2005thompson}
5. Triple decomposition problem \cite{kurt2006new} 6. Walnut DSA based on E-Multiplication - \cite{anshel2017walnutdsa,anshel2021walnutdsa}. All of the aforementioned protocols have been attacked with varying degrees of success. 

\textcolor{red}{*******END**********}
\end{comment}

Among many cryptographic schemes that were proposed based on non-abelian groups, there are several that are based on the complexity of various problems related to conjugacy in groups, such as conjugacy search problem, simultaneous conjugacy search problem, and other problems related to them. The conjugacy decision problem asks whether or not two given elements $a,b$ of a group $G$ are conjugate. The conjugacy search problem (CSP), on the other hand, given two elements $a,b$ of a group $G$ that are conjugate, asks to find at least one $r\in G$ such that $a^r=r^{-1}ar=b$. And the simultaneous conjugacy search problem (SCSP) similarly asks, given two tuples $(a_1,a_2,\ldots, a_n)$ and $(b_1,b_2,\ldots, b _n)$ of elements in $G$, such that there is $r\in G$ satisfying $a_i^r=b_i$ for all $1\leq i\leq n$, to find at least one such $r$. The first protocol in this family was proposed by Anshel-Anshel-Goldfeld (AAG) commutator key exchange protocol was proposed in~\cite{AAG}. Later Ko, Lee et al in~\cite{KoLee00} introduced a non-commutative version of Diffie-Hellman key exchange protocol. This was followed by many other schemes that were based on variants and generalizations of the conjugacy search problem. We mention here a relatively recent WalnutDSA scheme~\cite{anshel2017walnutdsa,anshel2021walnutdsa} that was proposed for standardization at a 2016 NIST call for post-quantum cryptographic constructions. A (nonexhaustive) list of earlier proposed schemes include Stickel key exchange protocol~\cite{stickel2005new}, Shpilrain-Ushakov scheme based on the decomposition problem~\cite{shpilrain2005thompson}, Kurt key exchange primitive based on the triple decomposition problem~\cite{kurt2006new}, Shpilrain-Zapata scheme  based on subgroup membership search problem~\cite{shpilrain2006zapata},
Kahrobaei-Koupparis digital signature scheme~\cite{KK12}, Kahrobaei-Khan non-commutative ElGamal public key encryption scheme~\cite{KK06}. All of these schemes are generic protocols that require a choice of an appropriate platform group or a class of groups to operate. For example, the original Ko-Lee and AAG protocols were proposing to use braid groups as platform groups, while Shpilrain and Ushakov considered Thompson's group $F$ as a platform group in~\cite{shpilrain2005thompson}. Other generalizations of AAG and Ko-Lee protocols were recently proposed by Habeeb~\cite{habeeb:oblivious23} and very recently by Lin and Han~\cite{lin2026quantumsafekeyexchangescheme}. Several cryptanalysis attacks were suggested~\cite{kreuzer_mu:linear_algebra_attack14,ben2018cryptanalysis} that showed certain vulnerabilities of the schemes in the case when the group has efficient matrix representations.

Automaton groups have been proposed as a platform for some cryptographic protocols with history described in detail in a recent paper~\cite{KNR}.
The first protocol of this type was based on Grigorchuk groups~\cite{grigorchukcrypto}, although it was shown to be insecure in \cite{grigorchukcripto2}. In~\cite{grigorchuk2019keyagreement}, Grigorchuk and Grigoriev suggested some specific families of automaton groups, known as branch groups, as platforms for Anshel-Anshel-Goldfeld key-agreement. Among others, they suggest the aforementioned Grigorchuk group and the Basilica group~\cite{grigorch_z:basilica,bondarenko_gkmnss:full_clas32}.

In most cases the groups generated by automata possess very peculiar and interesting characteristics. They provide a range of new examples that essentially do not have a simpler description than the one given by automaton. They include groups with properties such as torsion infinite finitely generated, or being of intermediate growth, and thus non-linear by Tits alternative~\cite{tits:free_groups_in_linear_groups72}. The class of automaton groups is also very interesting from algorithmic viewpoint. These groups admit effective calculations with group elements as the word problem is decidable (with exponential time complexity in general and polynomial time complexity in the case of contracting groups). However, many standard algorithmic problems are either undecidable or their decidability status is unknown. For example, \v Suni\'c and Ventura showed that the conjugacy problem is undecidable~\cite{sunic_v:conjugacy_undecidable} and Gillibert showed that the finiteness of an automaton semigroups is undecidable~\cite{gillibert:finiteness14}, and so is the order problem~\cite{gillibert:order_undecidable18,bartholdi_m:order_undecidable}.

These algorithmic properties make the class of automaton groups a good candidate for group-based cryptography. Moreover, many automaton groups possess naturally commuting subgroups, which is a requirement for some protocols, such as Ko-Lee key exchange protocol. However, there are some positive results regarding the solutions of conjugacy (search) problem in automaton groups. For example, it was proved that the conjugacy problem in Grigorchuk group $\G$ is decidable by Leonov~\cite{leonov:conjugacy} and Rozhkov~\cite{rozhkov:conjugacy}. These ideas were used by Grigorchuk and Wilson to construct an algorithm solving the conjugacy problem in the certain class of branch groups~\cite{grigorch_w:conjugacy}. Later polynomial time algorithm solving the conjugacy problem in $\G$ was constructed by Lysenok, Myasnikov, and Ushakov~\cite{lysenok_mu:conjugacy_problem_in_grigorchuk_group_polynomial10} and very recently a linear time algorithm was developed by Modi, Seedhom, and Ushakov in~\cite{modi_su:linear_time_algorithm_conjugacy_grigorchuk21}. One more large group for which the conjugacy problem is decidable is the group $B_d$ of all bounded automata over a $d$-letter alphabet~\cite{bondarenko_bsz:conjugacy}. This group, as well as the notion of polynomially and exponentially growing automata, was introduced by Sidki~\cite{sidki:nofree} who proved that finite automata of polynomial growth  generate groups that do not contain free nonabelian subgroups. Note that group $B_d$ is very large and not finitely generated, and the solution of the conjugacy problem in $B_d$ does not yield a solution of this problem in any subgroup of $B_d$.

In this paper we study a possibility to use the class of contracting automaton groups for cryptographic protocols based on variants of conjugacy search problem. This class includes many famous groups, such as Grigorchuk group, Gupta-Sidki group, Basilica group, etc. As mentioned above, it allows for effective calculations with group elements since there is a polynomial time algorithm solving the word problem that goes back to calculations of Grigorchuk~\cite{grigorch:degrees} and whose complexity was estimated by the third author in~\cite{savchuk:wp}. The class of contracting groups was formally introduced by Nekrashevych who discovered a deep connection between automaton groups and holomorphic dynamics via the notion of iterated monodromy groups (see~\cite{nekrash:self-similar}). Contracting groups can either be defined through their actions on regular rooted trees or directly using the structure of automata defining group elements. 

Each element of a contracting group can be identified with a, so-called, nucleus portrait that is a finite tree whose non-leaf vertices are labeled by permutations of the input alphabet, and the leaves are labeled by the elements of a finite subset of a group called the nucleus. Such a representation can be used as an analogue of the normal form (which is generally not defined for automaton). Using nucleus portraits instead of words to represent elements of a group potentially offers additional security for cryptographic protocols. For example, it was pointed out by Dehornoy~\cite{dehornoy:braid-based_cryptography2004} that while using cryptographic protocols based on the conjugation in braid groups, the (word) length of the secret conjugator can be inferred from the publicly transmitted data as a consequence of the fact that the normal form in braid groups does not significantly change the length of any reduced word. The size and the structure of a nucleus portrait are much less predictable under conjugation in the case of contracting groups. Moreover, to the best of our knowledge there is no algorithm for a general contracting group that recovers some word in generators that represents a given nucleus portrait. However, we present a polynomial time algorithm in the case of regular branch contracting groups:
\begin{theoremportrait}
Let $G$ be a contracting self-similar regular branch group acting on $T_d$, generated by a symmetric set $S$ that contains the nucleus of $G$. There is an algorithm that, given the nucleus portrait of an element $g\in G$ that can be defined by an (unknown) word in $S$ of length up to $n$, constructs a word in $S$ of polynomial in $n$ length, representing $g$. The complexity of this algorithm is polynomial in $n$.
\end{theoremportrait}
Note, that the words obtained as the outcome of the algorithm in the above theorem are very different from the input words, so even in the case of regular branch contracting groups nucleus portraits significantly hide the information about the input words. The proposed algorithm is a generalization of a similar algorithm for Grigorchuk group described in~\cite{grigorch:solved,lysenok_mu:conjugacy_problem_in_grigorchuk_group_polynomial10} and uses the standard techniques in computational group theory such as Reidemeister-Schreier process and Schreier graphs. Theorem~\ref{thm:word_from_portrait} is a direct corollary of a more general Theorem~\ref{thm:word_from_portrait_depth} that also gives a way to check if a given portrait in the contracting closure $\mathcal NG$ of $G$ (as defined in Section~\ref{sec:word_from_portrait}) represents an element of $G$, thus addressing a membership problem for $G$ in $\mathcal NG$.

We explore the techniques used for cryptanalysis of AAG protocol known as length-based Attack (LBA) on SCSP. This attack was initially introduced by Hughes and Tannenbaum in~\cite{HughesTann02} in the context of braid groups and its various forms were studied in several classes of groups, including free groups and groups with generic free basis property~\cite{myasnikov_u:random_subgroups_and_analysis_of_length-based_and_quotient_attacks08,myasnikov_u:length-based_attack_on_braid_groups07} and Kahrobaei-Garber-Lam did the analysis for polycyclic groups~\cite{graber_kn:length-based_attack15}. The LBA on SCSP attempts to find a secret conjugator heuristically by subsequently finding a sequence of ``partial conjugators'' $r_k$, $k\geq1$ that decrease the ``length'' of a tuple $(a_1^{r_k}b_1^{-1},a_2^{r_k}b_2^{-1},\ldots, a_n^{r_k}b_n^{-1})$, where the length function can be defined in multiple ways. In our implementation we found that the most reasonable choice for the length function is the sum of the depths of nucleus portraits of the entries of a given tuple. There are several more adjustable parameters of the version of LBA that we implemented that are discussed in detail in Section~\ref{sec:LBA}.

Myasnikov and Ushakov suggested in~\cite{myasnikov_u:random_subgroups_and_analysis_of_length-based_and_quotient_attacks08} that the LBA should work well in groups that have generically free subgroups. By the result of Nekrashevych~\cite{nekrashevych:contracting_no_free} contracting groups have no free nonabelian subgroups, so they constitute a natural class where LBA may not be effective.

For our statistical analysis we have chosen several contracting groups and ran the LBA attack against SCSP for them. These include Grigorchuk group $\G$~\cite{grigorch:burnside}, Basilica group $\B\cong \textrm{IMG}(z^2-1)$~\cite{grigorch_z:basilica} and its generalizations $p$-Basilica groups introduced in~\cite{didomenico_fnt:p-basilica22}, the group of intermediate growth $\textrm{IMG}(z^2+i)$~\cite{bux_p:iter_monodromy,grigorch_ss:img}, Universal Grigorchuk group (acting on 6-ary regular rooted tree)~\cite[Problem~9.5]{grigorch:solved}, two virtually abelian groups, and 2 groups associated with iterated monodromy groups of quadratic rational functions.  We found that the LBA variant against SCSP that we studied is mostly ineffective for groups which are not virtually abelian, but works quite well (with success rate of more than fifty percent for our choice of parameters) for 2 groups that contain $\Z^2$ as a subgroup of index 2.

Analyzing all possible attacks against SCSP in the case of contracting groups is not in the scope of this paper. Instead, we are setting up the stage for further research, some of which is under way. Scuderi, Seekamp, and the third author are currently studying a new portrait-based heuristic attack against SCSP in the case of contracting groups acting on binary tree~\cite{portrait_based_attack:github} when the length of the secret conjugator is known. The paper containing the detailed result of this attack is currently under preparation.

The paper is organized as follows. In Section~\ref{sec:cryptosystems} we recall several group-based cryptosystems and the algorithmic problems they are based upon. Sections~\ref{sec:automaton_groups} and~\ref{sec:contracting} we introduce the basic notions related to automaton and contracting groups. Our main theoretical result is contained in Section~\ref{sec:word_from_portrait}, in which we develop an algorithm of recovering a word representing a given nucleus portrait of an element of a contracting regular branch group. Section~\ref{sec:LBA} describes in detail the variant of the length-based attack that we study, and Section~\ref{sec:statistics} contains the results of our statistical analysis of the effectiveness of the proposed length-based attack.

\section{Cryptosystems Based on Variants of the Conjugacy Search Problem in Groups}
\label{sec:cryptosystems}

In this section we describe some well-studied cryptography protocols that are based on variants of conjugacy search problem. The list is not exhaustive and we present it here to provide additional motivation for our work.

\subsection{Commutator Key Exchange: a.k.a. Anshel-Anshel-Goldfeld (AAG) protocol}
\label{AAG}
As usual, we use two entities, called Alice and Bob, for presenting the two parties which plan to communicate over an insecure channel.

Let $G$ be a group with generators $g_1, \ldots, g_n$. First, Alice chooses her public set $\overline{a}=(a_1,\ldots,a_{N_1})$, where $a_i \in G$, and Bob chooses his public set $\overline{b}=(b_1,\ldots,b_{N_2})$, where $b_i \in G$. They both publish their sets. Alice then chooses her private key $A=a_{s_1}^{\varepsilon_1} \cdots a_{s_L}^{\varepsilon_L}$, where $a_{s_i} \in \overline{a}$ and $\varepsilon_i \in \{\pm1\}$. Bob also chooses his private key $B=b_{t_1}^{\delta_1} \cdots b_{t_M}^{\delta_M}$, where $b_{t_i} \in \overline{b}$ and $\delta_i \in \{\pm1\}$. Alice computes $b'_i=A^{-1}b_iA$ for all $b_i \in \overline{b}$ and sends it to Bob. Bob also computes $a'_i=B^{-1}a_iB$ for all $a_i \in \overline{a}$ and sends it to Alice. Now, the shared secret key is $K=A^{-1}B^{-1}AB$. Alice can computes this key by:
\begin{eqnarray}
K_A & = & A^{-1} (a'^{\varepsilon_1}_{s_1} \cdots a'^{\varepsilon_L}_{s_L}) = A^{-1} (B^{-1} a_{s_1} B)^{\varepsilon_1} \cdots (B^{-1} a_{s_L} B)^{\varepsilon_L} = \nonumber \\
& = & A^{-1}B^{-1} (a_{s_1}^{\varepsilon_1} \cdots a_{s_L}^{\varepsilon_L}) B = A^{-1}B^{-1}AB = K. \nonumber
\end{eqnarray}
Similarly, Bob can compute $K_B=B^{-1} (b'^{\delta_1}_{t_1} \cdots b'^{\delta_M}_{t_M}) = B^{-1}A^{-1}BA$, and then he knows the shared secret key by $K=K_B^{-1}$.

We note that solving the simultaneous conjugacy problem is generally not enough to break AAG protocol as the secret conjugators $A$ and $B$ must be found in the respective public subgroups $\langle a_1,\ldots,a_{N_1}\rangle$ and $\langle b_1,\ldots,b_{N_2}\rangle$. Thus, attacking the protocol requires solving the membership search problem in addition to the SCSP. The security of the AAG protocol is based on the assumption that the subgroup-restricted simultaneous conjugacy search problem is hard. For more details, see~\cite{AAG}.

\subsection{Non-commutative Diffie-Hellman: a.k.a. Ko-Lee protocol}
Originally specified by Ko, Lee, et al.~\cite{KoLee00} using braid groups, their non-commutative analogue of Diffie-Hellman key exchange can be generalized to work over other platform groups. Let $G$ be a finitely presented group, with $A,B \leq G$ such that all elements of $A$ and $B$ commute. %(i.e., $[A,B]=1$).

An element $g\in G$ is chosen, and $g, G, A, B$ are made public. A shared secret can then be constructed as follows:
\begin{itemize}
\item Alice chooses a random element $a\in A$ and sends $g^{a}$ to Bob.
\item Bob chooses a random element $b\in B$ and sends $g^{b}$ to Alice.
\item The shared key is then $g^{ab}$, as Alice computes $(g^b)^a$, which is equal to Bob's computation of $(g^a)^b$ as $a$ and $b$ commute.
\end{itemize}

The security of Ko-Lee rests upon solving the conjugacy search problem within the subgroups $A, B$.

\subsection{Non-Commutative ElGamal Key-Exchange: a.k.a. Kahrobaei-Khan Protocol}
In the 2006 paper by the first author and Khan~\cite{KK06}, the authors proposed two adaptations of the ElGamal asymmetric key encryption algorithm for use in non-commutative groups. Let $S,T$ be finitely generated subgroups such that all elements of $S$ and $T$ commute. In any exchange, the triple $\langle G, S, T\rangle$ is made public.

%\subsubsection{Non-Commutative Key Exchange Using Conjugacy Search}

\begin{itemize}
\item Bob chooses $s\in S$ as his private key, a random element $b\in G$, and publishes as his public key the tuple $\langle b, c\rangle$, with $c=b^s$.
\item To create a shared secret $x\in G$, Alice chooses $x$ and a $t\in T$. Using Bob's public key, she publishes $\langle h, E\rangle$, with $h=b^t$ and $E=x^{c^t}$.
\item To recover $x$, Bob first computes $h^s$, which, as elements of $S$ and $T$ commute, yields
$$
h^s=(b^t)^s=(b^s)^t=c^t.
$$
Bob can then calculate $x=E^{(c^t)^{-1}}$.
\end{itemize}

The security of this scheme relies upon the conjugacy search problem in $G$.

\section{Automaton Groups}
\label{sec:automaton_groups}

This section provides an overview of automaton groups. We discuss the motivation behind their various geometric descriptions and establish the notation required to analyze them.

\subsection{Trees}
Automaton groups act by automorphisms on regular rooted trees. We begin with setting up the terminology.

\begin{definition}
    A connected graph with no cycles is called a \emph{tree}. A tree $T$ is called \textit{rooted} if one of its vertices, called the \textit{root} is selected. 
\end{definition}

The trees are endowed with a combinatorial metric in which the distance between two vertices is equal to the number of edges in the shortest path connecting them.

\begin{definition}
    The \emph{$n$-th level} of a rooted tree $T$ is the set of vertices whose distance from the root is $n$.
\end{definition}

Given that trees have no cycles, each vertex $v$ at level $n$ is connected to the root by a unique path. The vertex $v'$ in this path that lies in level $n-1$ is called the \emph{parent} of $v$. The vertex $v$ in this case is called a \emph{child} of $v'$. Thus every vertex except the root has exactly one parent and may have several children. 

\begin{definition}
A rooted tree $T$ is called \emph{regular} (or $d$-regular, or $d$-ary) if there is $d\geq1$ such that each vertex of $T$ has exactly $d$ children. In case when $d=2$ such a tree is called \emph{binary}.
\end{definition}

Throughout our discussion, trees grow form top to bottom. Root is the highest vertex and the children of each vertex $v$ are located right under $v$.  We will use $T_d$ as the standard notation to denote the $d$-ary tree.  Figure~\ref{fig:binary_tree} depicts a binary tree.

\begin{definition}
The group of all graph automorphisms of the regular rooted tree $T_d$ with the operation of composition is called the \emph{full group of automorphisms of $T_d$} and is denoted $\Aut T_d$.
\end{definition}

\begin{figure}
\begin{center}
\epsfig{file=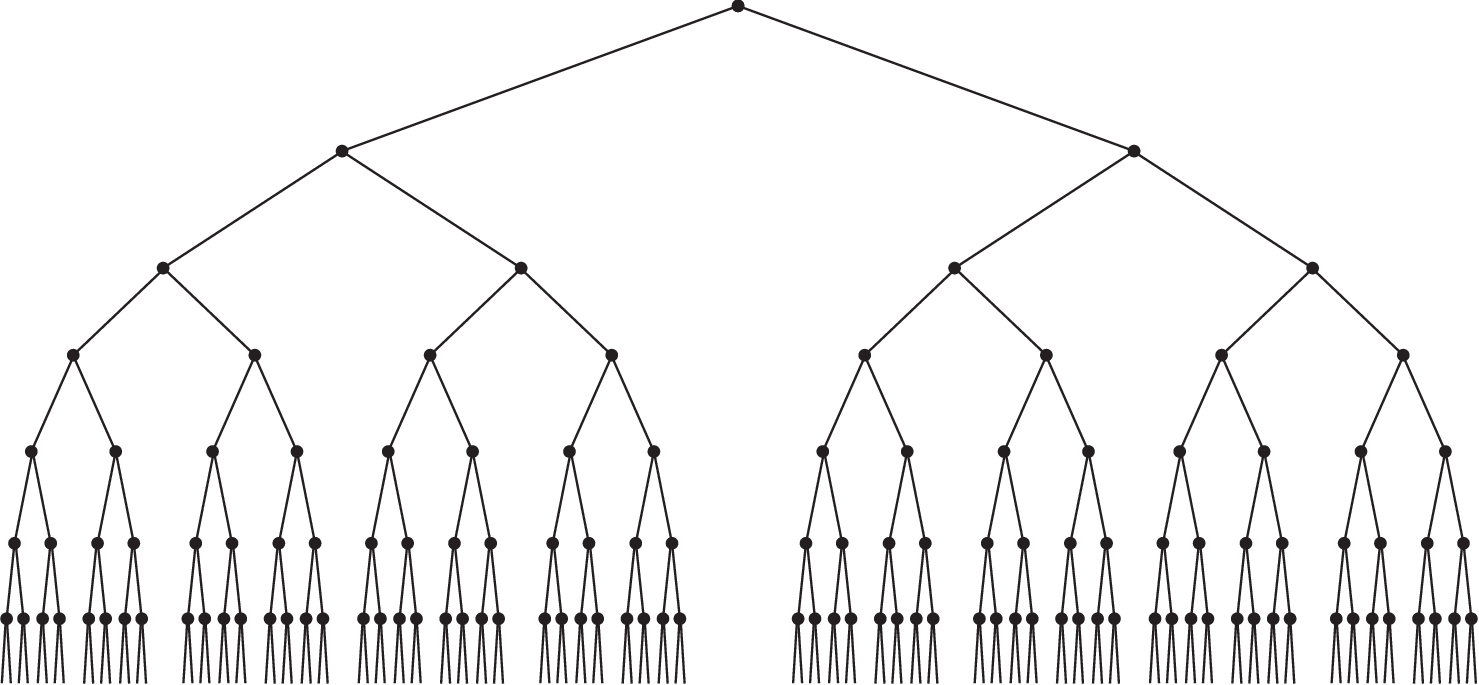,width=250pt}
\end{center}
\caption{Binary tree\label{fig:binary_tree}}
\end{figure}

We adopt the following convention throughout the paper: 
\begin{convention}[Right Actions]
\label{conv:right_actions}
    If $g$ and $h$ are elements of some group acting on a set $Y$ and $y\in Y$, then $gh(y)=h(g(y))$. In other words, we agree to consider only right actions of groups.
\end{convention}

Let $X$ be a finite set of cardinality $d$ and let $X^*$ denote the set of all finite words over $X$ (that can be thought as the free monoid generated by $X$). This set can be naturally endowed with a structure of a rooted $d$-ary tree by declaring that $v$ is adjacent to $vx$ for
any $v\in X^*$ and $x\in X$. So the vertices of $T_d$ are labeled by finite words over $X$. The empty word corresponds to the root
of the tree $T_d$ and $X^n$ corresponds to the $n$-th level of the tree. Given this interpretation, we will often write $\Aut X^*$ for $\Aut T_d$. Thus, we view  $X^*$ as a tree and consequently its automorphisms as those of a tree instead of a free monoid on $X$. 

\begin{definition}
\label{section-hom}
    For an automorphism $g$ of the tree $X^*$ and $x\in X$, we have that for any $v\in X^*$, $g(xv) = g(x)v'$ for some $v' \in X^*$. Then the map $g|_x \colon X^* \to X^*$ given by $g|_x(v) = v'$ defines an automorphism of $X^*$ called \textit{section} of $g$ at vertex $x$. Furthermore,  for any $w=x_1x_2\ldots x_n\in X^*$ we define the section $g|_w$ of $g$ at $w$ as 
\[g|_{w}=g|_{x_1}|_{x_2}\ldots|_{x_n}.\]
\end{definition}

\subsection{Automaton and self-similar groups}
Automorphisms of the tree $X^*$ can be described using the language of Mealy automata.

\begin{definition}
A \emph{Mealy automaton} (or simply \emph{automaton}) is a tuple
$(Q,X,\pi,\lambda)$, where $Q$ is a set (a set of \emph{states}), $X$ is a
finite alphabet, $\pi\colon Q\times X\to Q$ is the \emph{transition function}
and $\lambda\colon Q\times X\to X$ is the \emph{output function}. If the set
of states $Q$ is finite the automaton is called \emph{finite}. If
for every state $q\in Q$ the output function $\lambda(q,x)$ induces
a permutation of $X$, the automaton $\A$ is called \emph{invertible}.
Selecting a state $q\in Q$ produces an \emph{initial automaton}
$\A_q$.
\end{definition}

Automata are often represented by the \emph{Moore diagrams}. The
Moore diagram of an automaton $\A=(Q,X,\pi,\lambda)$ is a directed
graph in which the vertices are the states from $Q$ and the edges
have form

\begin{center}
    \begin{tikzpicture}
    \node[state] (q1) {\phantom{a}\hspace{1.6mm}$q$\hspace{1.6mm}\phantom{a}};
    \node[state, right of=q1] (q2) {\!\!$\pi(q,x)$\!\!};
    \draw (q1) edge[above] node{$x\,|\,\lambda(q,x)$} (q2);
    \end{tikzpicture}
\end{center}

\noindent for $q\in Q$ and $x\in X$. If the automaton is invertible, then it is
common to label vertices of the Moore diagram by the permutation
$\lambda(q,\cdot)$ and leave just first components from the labels of the edges. Figure \ref{fig:aut_grig} shows Moore diagram for automaton generating the Grigorchuk group $\G$. 

The action of an initial automaton $\A_q$ on $X^*$ is informally defined as follows. Given a word
$v=x_1x_2x_3\ldots x_n\in X^*$, the automaton $\A_q$ scans its first letter $x_1$ and outputs $\lambda(q,x_1)$. The rest of the word is handled in a similar fashion by the initial automaton $\A_{\pi(q,x_1)}$. Formally speaking, the functions $\pi$ and $\lambda$ can be extended to $\pi\colon
Q\times X^*\to Q$ and $\lambda\colon  Q\times X^*\to X^*$ via
\[\begin{array}{l}
\pi(q,x_1x_2\ldots x_n)=\pi(\pi(q,x_1),x_2x_3\ldots x_n),\\
\lambda(q,x_1x_2\ldots x_n)=\lambda(q,x_1)\lambda(\pi(q,x_1),x_2x_3\ldots x_n).\\
\end{array}
\]

The invertible initial automata define automorphisms of $X^*$. Moreover, any automorphism $g$ of $X^*$ can be encoded by the action of an invertible initial automaton with the following construction. Define an initial automaton $\A_g$ whose action on $X^*$ coincides with that of $g$ as follows. The set of states of $\A_g$ is the set $\{g|_v\colon  v\in X^*\}$ of different sections of $g$ at the vertices of the tree. The transition and output functions are defined by
\begin{equation}
\label{eqn:defn_homo}
\begin{array}{l}
\pi(g|_v,x)=g|_{vx},\\
\lambda(g|_v,x)=g|_v(x),
\end{array}
\end{equation}
and the initial state of $\A_g$ is $g$. We note that $\A_g$ does not need to be finite.

\begin{definition}
The group generated by all states of an invertible automaton $\A$ is
called an \emph{automaton group} and
denoted by $\mathbb G(\A)$.
\end{definition}

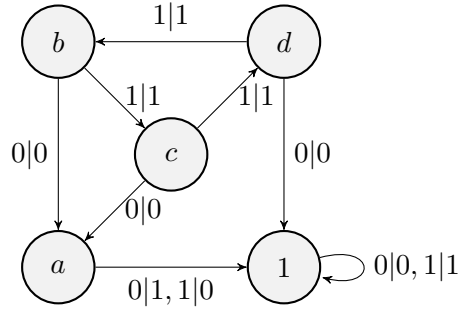
\begin{figure}%[ht]
    \centering
    \begin{tikzpicture}

    \node[state] (q1) {$a$};
    \node[state, above of=q1] (q2) {$b$};
    \node[state, right of=q2] (q4) {$d$};
    \node[state, below of=q4] (q0) {$1$};
    \node[state] (q3) at (1.5, 1.5) {$c$};

    \draw (q1) edge[below] node{$0|1 , 1|0$} (q0);
    \draw (q2) edge[left] node{$0|0$} (q1);
    \draw (q2) edge[right] node{$1|1$} (q3);
    \draw (q4) edge[above] node{$1|1$} (q2);
    \draw (q4) edge[right] node{$0|0$} (q0);
    \draw (q3) edge[right] node{$0|0$} (q1);
    \draw (q3) edge[right] node{$1|1$} (q4);
    \draw (q0) edge[loop right] node{$0|0 , 1|1$} (q0);
    
    \end{tikzpicture}
\caption{Five-state Automaton $\A_\G$ generating the Grigorchuk group $\G$}
    \label{fig:aut_grig}
\end{figure}

Figure \ref{fig:aut_grig} shows the five-state automaton $\A_\G$ that generates the Grigorchuk group $\G$ acting on $X^*$ for $X = \{0,1\}$.

Often the language of self-similar groups is  used to describe automorphisms of a tree $X^*$ as well.

\begin{definition}
A subset $S$ of $\Aut X^*$ is \textit{self-similar} if all sections (as defined in Definition~\ref{section-hom}) of each element in $S$ are in $S$ again. A group $G<\Aut X^*$ is called \textit{self-similar} if it is self-similar as a set.
\end{definition}

A self-similar set $S$ has a structure of an automaton with transitions defined by:
\begin{center}
    \begin{tikzpicture}
    \node[state] (q1) {\phantom{!}$g$\phantom{!}};
    \node[state, right of=q1] (q2) {\!\!$g|_x$\!\!};
    \draw (q1) edge[above] node{$x\,|\,g(x)$} (q2);
    \end{tikzpicture}
\end{center}
for each $g\in S$ and $x\in X$.

An automaton group can be naturally defined as a group generated by a self-similar set. On the other hand, the whole self-similar group $G$ defines an automaton $\A(G)$ whose states are elements of the group.

The following trivial observation relates self-similar and automaton groups.

\begin{proposition}
A group $G$ is self-similar if and only if it is generated by (possibly infinite) automaton.
\end{proposition}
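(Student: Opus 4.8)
The plan is to prove both implications by translating directly between the two structures, since the statement is essentially a matter of unpacking the definitions already laid out in the excerpt.

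For the forward direction, suppose $G$ is self-similar, i.e.\ $G < \Aut X^*$ and all sections of all elements of $G$ lie in $G$. I would exhibit the generating automaton explicitly: take the state set to be $Q = G$ itself, let $X$ be the same alphabet, and define the transition and output functions by $\pi(g,x) = g|_x$ and $\lambda(g,x) = g(x)$, exactly as in equation~\eqref{eqn:defn_homo} and the diagram immediately preceding this proposition. Self-similarity guarantees $\pi$ is well-defined as a map $G \times X \to G$ (the section $g|_x$ is again in $G$), and since each $g$ is a tree automorphism, $\lambda(g,\cdot)$ is a permutation of $X$, so the automaton $\A(G)$ is invertible. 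One then checks that the automorphism of $X^*$ defined by the initial automaton $\A(G)_g$ coincides with $g$: this follows by induction on word length from the recursive formulas for the extended $\pi$ and $\lambda$, matching them against $g(xv) = g(x)\,(g|_x)(v)$. Hence every element of $G$ is a state of $\A(G)$, so $G$ is generated by (this possibly infinite) invertible automaton.

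For the converse, suppose $G = \mathbb{G}(\A)$ for some invertible automaton $\A = (Q,X,\pi,\lambda)$, possibly with $Q$ infinite. Each initial automaton $\A_q$ defines an automorphism of $X^*$, and $G$ is the group these generate inside $\Aut X^*$. I would show $G$ is self-similar as a subset of $\Aut X^*$, i.e.\ closed under taking sections. The key point is the section formula: for a state $q$, the section $(\A_q)|_x$ equals $\A_{\pi(q,x)}$, which is again a generator of $G$. For a general product $g = \A_{q_1}^{\pm1} \cdots \A_{q_k}^{\pm1} \in G$, one uses the cocycle-type identity $(gh)|_x = g|_x \cdot h|_{g(x)}$ together with $(g^{-1})|_x = (g|_{g^{-1}(x)})^{-1}$ to express $g|_x$ as a product of sections of the individual factors, each of which is a generator or inverse of a generator, hence in $G$. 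Iterating over all $x \in X$ and then over all vertices $w \in X^*$ shows every section $g|_w$ lies in $G$, which is precisely self-similarity.

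I do not anticipate a genuine obstacle here — the proposition is labelled "trivial observation" in the excerpt, and both directions amount to verifying that the dictionary between self-similar sets and automata (already half-stated in the two Moore-diagram displays) is consistent. The only mild care needed is (i) being explicit that "automaton" in this proposition is allowed to be infinite, so that the state set $Q = G$ is legitimate in the forward direction, and (ii) correctly handling inverses of generators when showing closure under sections in the converse, via the identity $(g^{-1})|_x = \bigl(g|_{g^{-1}(x)}\bigr)^{-1}$. Everything else is a routine induction on word length in $X^*$.
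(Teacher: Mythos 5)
Your proof is correct and follows exactly the route the paper intends: the paper states this as a ``trivial observation'' with no written proof, but the constructions it supplies immediately beforehand --- the automaton $\A(G)$ with state set $G$ and transitions $\pi(g,x)=g|_x$, $\lambda(g,x)=g(x)$, and the section formulas for products and inverses coming from the wreath recursion --- are precisely the two halves of your argument. Your added care about invertibility, the possibly infinite state set, and the identity $(g^{-1})|_x=\bigl(g|_{g^{-1}(x)}\bigr)^{-1}$ is consistent with the paper's right-action conventions, so there is nothing to correct.
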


The group $\Aut(X^*)$ is naturally isomorphic to $\Aut(X^*)\wr\Sym(d)$ via the map
\begin{equation}
\label{eqn:wreath_embedding}
G\ni g\mapsto (g|_0,g|_1,\ldots,g|_{d-1})\sigma_g\in G\wr \Sym(X),
\end{equation}
where $\sigma_g\in\Sym(d)$ is the permutation of $X$ induced by the action of $g$ on the first level of $X^*$. We will often identify $g$ with its image under this embedding and write
\[g = (g|_0,g|_1,\ldots,g|_{d-1})\sigma_g.\]
The right-hand side of the last expression is called the \emph{wreath recursion} of $g$. If $\sigma_g$ is the trivial permutation, it is customary to omit it in the wreath recursion.

The language of wreath recursion allows us to do effective calculations with elements of $\Aut(X^*)$. We will use the Right Action Convention~\ref{conv:right_actions} for formulas below. Left actions are also used quite often in the literature in this context, so it is important to clearly state what notation we use. The translation from right to left actions consists just in reversing the order of all multiplications. We choose the right actions here to be compatible with the right actions of permutations in a symmetric group in GAP~\cite{GAP4}.

Now, if
$g=(g_0,g_1,\ldots,g_{d-1})\sigma_g$ and $h= (h_1,h_2,\ldots,h_d)\sigma_h$ are two elements of $\Aut X^*$, then by the definition of the wreath product we get:
\begin{equation}
\label{eqn:product}
gh=\left(g_0h_{\sigma_g(0)},g_1h_{\sigma_g(1)},\ldots,g_{d-1}h_{\sigma_g(d-1)}\right)\sigma_g\sigma_h
\end{equation}
and
\begin{equation}
\label{eqn:inverse}
g^{-1}=\left(g^{-1}_{\sigma_g^{-1}(0)},g^{-1}_{\sigma_g^{-1}(1)},\ldots,g^{-1}_{\sigma_g^{-1}(d-1)}\right)\sigma_g^{-1}.
\end{equation}
The last two equalities for the basis for effective calculations in groups generated by automata.

\subsection{Variants of conjugacy problem in automaton groups}

The study of the conjugacy problem in automaton groups goes back to pioneering works of Leonov~\cite{leonov:conjugacy} and Rozhkov~\cite{rozhkov:conjugacy}, who proved that the conjugacy problem is decidable in Grigorchuk group $\G$. These ideas were used by Grigorchuk and Wilson to construct an algorithm solving the conjugacy problem in the certain class of branch groups~\cite{grigorch_w:conjugacy}. Later polynomial time algorithm solving the conjugacy problem in $\G$ was constructed by Lysenok, Myasnikov, and Ushakov~\cite{lysenok_mu:conjugacy_problem_in_grigorchuk_group_polynomial10} and very recently a linear time algorithm was developed by Modi, Seedhom, and Ushakov in~\cite{modi_su:linear_time_algorithm_conjugacy_grigorchuk21}. The techniques used in the aforementioned papers in most cases allow to solve also the conjugacy search problem. For example, the linear-time algorithm from~\cite{modi_su:linear_time_algorithm_conjugacy_grigorchuk21} can be modified to produce a conjugator in polynomial time.

One more large group for which the conjugacy problem is decidable is the group $B_d$ of all bounded automata over a $d$-letter alphabet~\cite{bondarenko_bsz:conjugacy}. This group, as well as the notion of polynomially and exponentially growing automata, was introduced by Sidki~\cite{sidki:nofree} who proved that finite automata of polynomial growth generate groups that do not contain free nonabelian subgroups. Note that group $B_d$ is very large and not finitely generated, and the solution of the conjugacy problem in $B_d$ does not yield a solution of this problem in any subgroup of $B_d$.

In connection to the cryptographic applications the simultaneous conjugacy search problem and its variants are the most relevant. To the best of our knowledge, there are no results on SCSP in automaton groups. The exact variant of SCSP depends on the cryptosystem used.

%%%%%%%%%% CONTRACTING GROPUS %%%%%%%%%%

\section{Contracting Groups}
\label{sec:contracting}
In this section we provide an overview of the class of contracting groups. In particular we recall the notion of a \emph{nucleus portrait} of an element of a contracting group that will play a role of the normal form of an element and that we propose to use as the way to transmit elements of these groups through communication channels. 
We refer the reader to~\cite{nekrash:self-similar} for details.

% In this section, after providing an overview of contracting groups, we define the unique geometric notion of \emph{nucleus portrait} peculiar to this class of automaton groups. This unique geometric notion is tantamount to \emph{normal form} of group elements and it is a vital characteristic for proposing contracting groups as platform for certain cryptography protocols.

\subsection{Basic definitions and properties}

\begin{definition}%\textbf{Contracting Groups}
A self-similar group $G <\Aut X^* $ is called \textit{contracting} if there exists a finite set $\N \subset G$ such that for each $g \in G$ there is a level $l \geq 0$ so that for all $ k \geq l $ we have $g|_v \in \N$ for all $v \in X^k$. The minimal set $\N$ is called the \textit{nucleus} of $G$. Any finite subset of $G$ containing the nucleus is called \textit{quasinucleus}.
\end{definition}

The following proposition can be considered as a second definition of contracting groups (see \cite[Lemma~2.11.12]{nekrash:self-similar}) and justifies the term ``contracting''. For an element $g$ of a group $G$ generated by a set $S$ we will denote by $|g|_S$ the length of $g$ with respect to $S$, i.e., the length of the shortest word in $S^{\pm1}$ representing $g$.

\begin{proposition}
    \label{prop:contracting_length}
A self-similar group $G$ generated by a self-similar generating set $S$ is contracting if and only if there exist $C>0$ and level $N\geq 1$ such that for any element $g$ of $G$
\[\big|g|_v\big|_S<\frac12|g|_S+C\]
for each vertex $v\in X^N$.
\end{proposition}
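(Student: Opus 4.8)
The plan is to prove the two implications separately, using throughout the iterated form of the product recursion (\ref{eqn:product}), i.e.\ $(gh)|_v=g|_v\cdot h|_{g(v)}$, and the fact that, $S$ being finite, each ball $\{h\in G:|h|_S\le r\}$ is finite. As a preliminary I would record the \emph{monotonicity of length under sections}: $|g|_v|_S\le|g|_S$ for all $g\in G$ and $v\in X^*$. Indeed, if $g=t_1\cdots t_m$ is a geodesic word with $t_i\in S^{\pm1}$ and $m=|g|_S$, then $g|_v$ is a product of $m$ sections $t_i|_{w_i}$ (all $w_i$ at the level of $v$), and each such section lies in $S^{\pm1}$: sections of elements of $S$ stay in $S$ by self-similarity of $S$, and sections of their inverses lie in $S^{-1}$ by (\ref{eqn:inverse}).

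Assume first that the length inequality holds with parameters $N$ and $C$. Iterating it along the levels $N,2N,3N,\dots$ (the induction step applies the hypothesis to the element $g|_{v'}$ for $v'\in X^{kN}$) and summing the resulting geometric series gives $|g|_v|_S<2^{-k}|g|_S+2C$ for all $v\in X^{kN}$. Choosing $k$ with $2^k\ge|g|_S$ forces $|g|_v|_S<2C+1$, and by monotonicity the same bound then holds for every section of $g$ at a level $\ge kN$. Hence every sufficiently deep section of every $g\in G$ lies in the finite set $\N_0:=\{h\in G:|h|_S\le 2C+1\}$, so $G$ is contracting.

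Conversely, assume $G$ is contracting with nucleus $\N$, and set $M:=\max_{h\in\N}|h|_S$ and $B:=\max(2,2M)$. Since the ball $\{h\in G:|h|_S\le B\}$ is finite and $G$ is contracting, there is a single level $N\ge1$ such that $h|_v\in\N$ whenever $|h|_S\le B$ and $v\in X^N$ (take $N$ to be the maximum, over this finite ball, of the levels guaranteed by the definition of contracting). Now let $g$ be arbitrary with $|g|_S=m$, fix a geodesic word for $g$, and cut it into $q=\lceil m/B\rceil$ consecutive blocks, so $g=h_1h_2\cdots h_q$ with $|h_j|_S\le B$ for each $j$. For $v\in X^N$ the iterated recursion gives $g|_v=(h_1|_{w_1})(h_2|_{w_2})\cdots(h_q|_{w_q})$ with every $w_j\in X^N$, so each factor lies in $\N$ and has $S$-length at most $M$, whence
\[
|g|_v|_S\ \le\ qM\ =\ \lceil m/B\rceil\,M\ \le\ \tfrac{M}{B}\,m+M\ \le\ \tfrac12\,|g|_S+M.
\]
This is the desired inequality with $C:=M+1$ (and $N\ge1$ by construction).

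The easy direction is the first one; the content, and the step I expect to be the main obstacle, is the converse. The naive hope of choosing a uniform $N$ that drives \emph{all} level-$N$ sections into $\N$ fails, because the depth at which the sections of $g$ stabilize inside the nucleus grows with $|g|_S$. The way around this is to fix $N$ only for the finitely many ``short'' elements (those of length $\le B$) and then use the subadditivity of $|\cdot|_S$ along the block decomposition $g=h_1\cdots h_q$, choosing the block size $B\ge 2M$ exactly so that the per-block blow-up factor $M/B$ is at most $\frac12$. The remaining care is bookkeeping: keeping $N\ge1$ and $C>0$, and noting that the argument relies essentially on finiteness of $S$ (for finite balls), consistent with the automaton-group setting of the paper.
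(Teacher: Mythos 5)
Your proof is correct. The paper does not actually prove this proposition itself --- it only cites \cite[Lemma~2.11.12]{nekrash:self-similar} --- and your argument is essentially the standard one found there: the forward direction by iterating the inequality along blocks of $N$ levels, using monotonicity of $|\cdot|_S$ under sections and finiteness of balls to produce a finite set absorbing all deep sections, and the converse by cutting a geodesic into blocks of length at most $B\geq 2M$ so that each block's level-$N$ sections land in the nucleus and the per-block contribution is at most $M\leq B/2$. Two cosmetic points: $N=\max_h l_h$ over the finite ball could a priori be $0$, so one should set $N:=\max(1,\max_h l_h)$; and, as you note, finiteness of $S$ (implicit in the paper's automaton-group setting and in the cited lemma) is genuinely needed for the finite-ball steps in both directions.
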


For the first time contracting type arguments were applied by Grigorchuk in his construction of the first Burnside group of intermediate growth (a.k.a. Grigorchuk group) $\G$~\cite{grigorch:burnside,grigorch:degrees}. It is contracting with the nucleus consisting of five elements $\N = \{ 1,a,b,c,d \}$. We note that in this case the nucleus coincides with the set of states of the automaton generating $\G$, but in general it need not be the case. The term ``contracting'' was coined by Nekrashevych~\cite{nekrash:self-similar}, who developed the theory of Iterated Monodromy Groups, in which contracting groups play a key role.

Every element $g$ of $G$ can be uniquely described by its nucleus portrait that we define below. By definition, there is some minimal level $l \geq 0$ so that for all $k \geq l$ we have $g|_v \in \N $ for all $v \in X^k$. Therefore, we can identify $g$ with a finite tree of depth $l$ whose vertices are labeled by permutations of $X$ and leaves are labeled by the elements of the nucleus.

\begin{definition}
\label{def:portrait}
    Let $g$ be an element of a contracting group $G$ acting on a $d$-ary rooted tree $X^*$ with the nucleus $\mathcal N$. The \emph{nucleus portrait} $\pi(g)$ of $g \in G$ is a labeled tree obtained recurrently as follows. If $g\in\N$ then the nucleus portrait of $g$ is a single vertex labeled by $g$. Otherwise the nucleus portrait of $g$ starts with a single vertex labeled by the permutation induced by $g$ on the first level of $X^*$ that is connected to $d$ children on the first level that are roots of the portraits of $g|_x$ for $x\in X$.     
\end{definition}

Note that the contracting property of the groups guarantees that the nucleus portrait of every element is finite. If two words in the generators of a contracting group give rise to two identical nucleus portraits then they represent the same element of the group. 

When we will describe multiplication of nucleus portraits in Subsection~\ref{ssec:using_contracting} we will also need a bit more general notion of a \emph{portrait} of an element of a self-similar group, which is defined  similarly to the nucleus portrait, but without the requirements that the leaves must be labeled by the elements of the nucleus and that the development of the portrait must stop once an element of the nucleus is reached. In other words, a portrait of $g$ is any finite subtree of $X^*$ containing the root whose internal vertices are labeled by the permutations induced by $g$ at these vertices, and whose leaves are labeled by the sections of $g$ at the corresponding vertices. A portrait of $g$ is called an \emph{extension of the nucleus portrait} if all of its leaves are labeled by the elements of the nucleus. Such extensions are obtained from nucleus portraits by a finite number of operations that add children to a terminal node labeled by an element $n$ of the nucleus, label those children by sections of $n$, and replace the label $n$ of the original node by the permutation $\sigma_n$ induced by $n$ at the first level. Unlike in the case of nucleus portraits, there are many portraits (or extended nucleus portraits) representing the same element of a group.

One of the main computational advantages of contracting groups is that the word problem can be decided in polynomial time~\cite{nekrash:self-similar} (with the main idea of the algorithm consisting in comparing the nucleus portraits of the input words). Additionally, one can bound the degree of a polynomial solving the word problem by $(|\N|^2-1)\log_2d+\varepsilon$ as shown in~\cite{savchuk:wp}.

\begin{definition}
    For an element $g$ of a contracting group $G$ the \emph{depth} of $g$, denoted by $\partial(g)$, is the number of levels of $\pi(g)$). The \emph{portrait boundary size} of $g$, denoted by $s(g)$, is the number of leaves of $\pi(g)$.
\label{portrait-depth-def}
\end{definition}

The notion of the depth of an element was introduced by Sidki in~\cite{sidki:on_a_2-generated_infinite_3-group_the_subgroups_and_automorphisms87}. The nucleus portraits were introduced for the first time by Grigorchuk in~\cite{grigorch:solved} under the name of \emph{core portraits}. Additionally, Grigorchuk defined the portrait growth function and provided some bounds for this function for Grigorchuk group. \v Suni\'c and Uria-Albizuri in a recent paper~\cite{sunic_u:portrait_growth23} derived asymptotic behavior of this function for Grigorchuk group (that turns out to be double-exponential) and obtained recursive formulas for the portrait growth of any finitely generated contracting regular branch group.

%The size characterizes the amount of data required to transmit $g$ via its nucleus portrait.

If $G$ is a contracting group generated by a finite set $S$, there is a direct relation between the length $|g|_S$ of $g\in G$ with respect to $S$ and both $\partial(g)$ and $s(g)$.

\begin{proposition}
\label{prop:log_depth}
Suppose $G$ is a contracting group generated by a finite self-similar set $S$ containing the nucleus of $G$. Then there is $R>0$ such that for each $g\in G$ 
\[\partial(g)\leq \ceil{R\log_2 |g|_S+R}.\]
\end{proposition}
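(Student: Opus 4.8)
The plan is to iterate the length-contraction estimate of Proposition~\ref{prop:contracting_length} and then cash it in against the finiteness of $S$. First I would use that proposition to fix a constant $C>0$ and a level $N\ge 1$ such that $\bigl|h|_v\bigr|_S<\tfrac12|h|_S+C$ for every $h\in G$ and every $v\in X^{N}$. Splitting a vertex $u\in X^{kN}$ into $k$ consecutive blocks of length $N$ and using the composition rule $g|_{u_1u_2}=(g|_{u_1})|_{u_2}$ from Definition~\ref{section-hom}, a straightforward induction on $k$ then gives
\[
\bigl|g|_u\bigr|_S<\frac{1}{2^{k}}\,|g|_S+C\sum_{i=0}^{k-1}\frac{1}{2^{i}}<\frac{1}{2^{k}}\,|g|_S+2C\qquad\text{for all }u\in X^{kN}.
\]
Apart from summing this geometric series, there is no real computation in the argument.

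Next, for a fixed nontrivial $g$ (so $|g|_S\ge1$) I would take $k:=\max\bigl(1,\ceil{\log_2|g|_S}\bigr)$, the least integer $\ge1$ with $2^{k}\ge|g|_S$, and note that $k\le\log_2|g|_S+1$. For this $k$ every section $g|_u$ with $u\in X^{kN}$ satisfies $\bigl|g|_u\bigr|_S<1+2C$, hence lies in the \emph{finite} set $F:=\{h\in G:\,|h|_S\le\ceil{2C}+1\}$, which is finite precisely because $S$ is finite. Now comes the crucial step. Since $F$ is finite and $G$ is contracting, for each $h\in F$ there is a finite level $\ell(h)$ with $h|_w\in\N$ for all $w\in X^{m}$ whenever $m\ge\ell(h)$; I then set $L:=\max_{h\in F}\ell(h)<\infty$. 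For any vertex $u'\in X^{kN+L}$, writing $u'=uw$ with $u\in X^{kN}$ and $w\in X^{L}$, we get $g|_{u'}=(g|_u)|_w\in\N$, because $g|_u\in F$ and $L\ge\ell(g|_u)$; the same holds at every level below $kN+L$ as well. Thus the nucleus portrait of $g$ has all of its leaves at level at most $kN+L$, so
\[
\partial(g)\ \le\ kN+L+1\ \le\ N\log_2|g|_S+\bigl(N+L+1\bigr).
\]

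Finally I would set $R:=N+L+1$; then $\partial(g)\le R\log_2|g|_S+R$, and since $\partial(g)$ is a nonnegative integer this yields $\partial(g)\le\ceil{R\log_2|g|_S+R}$. The identity element (and, if one wishes to be pedantic, any $g$ with $|g|_S\le1$) is handled directly: $\partial(1_G)=1\le R$, so the bound holds there too — equivalently one may read the statement as $\partial(g)\le\ceil{R\log_2\max(|g|_S,2)+R}$.

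The step I expect to be the real obstacle — or rather the one carrying the actual content — is the choice of the uniform level $L$. One needs a \emph{single} level that simultaneously stabilizes the nucleus portraits of \emph{all} group elements of $S$-length below the fixed threshold $\ceil{2C}+1$, not a level that is allowed to grow with $g$. This is exactly the point where finiteness of the generating set $S$ is indispensable: it is what makes $F$ a finite set and hence $L=\max_{h\in F}\ell(h)$ an honest constant. The rest — iterating Proposition~\ref{prop:contracting_length} into a convergent geometric series and folding the various additive constants into a single $R$ — is routine bookkeeping.
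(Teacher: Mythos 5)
Your proof is correct and follows essentially the same route as the paper's: iterate Proposition~\ref{prop:contracting_length} roughly $\log_2|g|_S$ times to force all sections at level about $N\log_2|g|_S$ into the finite set of elements of $S$-length below $1+2C$, then absorb the uniform extra depth needed to contract that finite set (your $L$, the paper's $M$) into the constant $R$. The only differences are bookkeeping — you take $R=N+L+1$ where the paper takes $R=\max\{N,M\}$ — and your treatment of the composition rule for sections and of very short elements is, if anything, slightly more careful than the paper's.
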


\begin{proof}
By applying Proposition~\ref{prop:contracting_length} $\ceil{\log_2|g|_S}$ times we obtain that for any vertex $v$ of level $\ceil{N\log_2|g|_S}$:
\[\big|g|_v\big|_S<\frac1{2^{\ceil{\log_2|g|_S}}}|g|_S+\sum_{i=0}^{\ceil{\log_2|g|_S}-1}\frac{C}{2^i}\leq 1+\sum_{i=0}^{\infty}\frac{C}{2^i}=1+2C.\]
Let 
\[M=\max\{\partial(h)\colon h\in G, |h|_s\leq 1+2C\}\] 
be the level at which all elements in $G$ of length up to $1+2C$ contract to the nucleus. Then $g$ will contract to the nucleus at level $\ceil{N\log_2|g|_S}+M=\ceil{N\log_2|g|_S+M}$. Thus, we can choose $R=\max\{N,M\}$.
\end{proof}

\begin{corollary}
\label{cor:log_depth}
Suppose $G$ is a contracting group generated by a finite self-similar set $S$ containing the nucleus of $G$. Then the portrait boundary size of an element $g\in G$ is at most polynomial in $|g|_S$.
\end{corollary}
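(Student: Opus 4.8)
The claim is that $s(g)$ is bounded by a polynomial in $|g|_S$. The plan is to bound the portrait boundary size in terms of the depth and then invoke the logarithmic depth bound from Proposition~\ref{prop:log_depth}. The key observation is structural: the nucleus portrait $\pi(g)$ is a subtree of the full $d$-ary tree $X^*$, and its depth is $\partial(g)$. Since a rooted $d$-ary tree of depth $\ell$ has at most $d^\ell$ vertices on any level and at most $d^\ell$ leaves total, we immediately get $s(g) \leq d^{\partial(g)}$.

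Now I would combine this with Proposition~\ref{prop:log_depth}, which gives a constant $R>0$ (depending only on $G$ and $S$) with $\partial(g) \leq \ceil{R\log_2|g|_S + R}$. Substituting,
\[
s(g) \leq d^{\partial(g)} \leq d^{\ceil{R\log_2|g|_S+R}} \leq d^{R\log_2|g|_S + R + 1} = d^{R+1}\cdot d^{R\log_2|g|_S} = d^{R+1}\cdot |g|_S^{R\log_2 d}.
\]
The right-hand side is a polynomial in $|g|_S$ of degree $R\log_2 d$, with the leading constant $d^{R+1}$; both the degree and the constant depend only on $G$ and $S$, not on $g$. (One should handle the trivial edge case $|g|_S = 0$, i.e.\ $g = 1$, separately: there $\pi(g)$ is a single leaf labelled by the identity, so $s(g) = 1$, and the bound holds vacuously; alternatively, restrict the asymptotic statement to $|g|_S \geq 1$.)

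I do not expect any real obstacle here — the result is essentially an immediate consequence of Proposition~\ref{prop:log_depth} together with the elementary fact that a depth-$\ell$ subtree of $T_d$ has at most $d^\ell$ leaves. The only point requiring a moment's care is making sure the ceiling in the exponent is absorbed cleanly (using $\ceil{x} \leq x+1$), and noting that exponentiating the logarithmic depth bound is exactly what converts the $\log$ into a polynomial degree. It is worth remarking that the degree $R\log_2 d$ of this polynomial is typically quite large, so while $s(g)$ is polynomially bounded in principle, in practice nucleus portraits can still grow rapidly — consistent with the double-exponential portrait growth in word length observed by \v Suni\'c and Uria-Albizuri, since word length itself can be exponential in other natural complexity measures.
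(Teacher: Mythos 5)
Your argument is exactly the paper's proof: bound $s(g)\leq d^{\partial(g)}$ by the number of leaves of a depth-$\partial(g)$ subtree of the $d$-ary tree, then apply Proposition~\ref{prop:log_depth} and absorb the ceiling via $\lceil x\rceil\leq x+1$ to obtain the same bound $d^{R+1}|g|_S^{R\log_2 d}$. The proposal is correct and requires no changes.
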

\begin{proof}
The portrait boundary size $s(g)$ of $g\in G$ is bounded from above by the number of leaves of the $d$-ary tree of depth $\partial(g)$, so we get by Proposition~\ref{prop:log_depth} that there is $R>0$ such that 
\begin{multline*}s(g)\leq d^{\partial(g)}\leq d^{\ceil{R\log_2 |g|_S+R}}\leq d^{R\log_2 |g|_S+R+1}\\
=d^{R+1}d^{\frac{\log_d|g|_S^R}{\log_d2}}=d^{R+1}d^{\log_d|g|_S^{R\log_2d}}=d^{R+1}|g|_S^{R\log_2d}.
\end{multline*}
\end{proof}

The last corollary shows that the amount of information required to transmit the portrait is polynomial in the length of a word representing that element. However, in most cases the actual contracting depth of an element is much smaller than the theoretical bound from Proposition~\ref{prop:log_depth} and the number of the nodes in the portraits is very similar (and in many cases, like in Grigorchuk group is actually much smaller) than the length of the word defining the element.

We have also computed, in the case of Grigorchuk group, the average portrait depths and the ratios of portrait boundary sizes to the number of vertices at the contracting depth levels. More precisely, given a contracting group $G$ acting on a $d$-ary tree, for $g \in G$, we define the ratio $\mathcal{R}_g = \frac{\displaystyle s(g) }{\displaystyle d^{\partial(g)}} $. Table~\ref{ratio-grigrochuk} displays variation of average $\mathcal{R}_g$ and average portrait depth $\partial(g)$ against the reduced word length for randomly generated elements of Grigorchuk group. Each average value is calculated using 100 randomly generated elements for the respective word length.

%%%%%%%%%%%%%%%%%%%%%%%%%%%%%%%%%%%%%%%%%%%%%%%%%%%%
%%%%%%%%%%%%% RATIO TABLE %%%%%%%%%%%%%%%%%%%%%%%%%%

\hspace{1cm}
\begin{table}[h]
\centering
\begin{tabular}{|p{4.0 cm}||c|c|c|c|c|c|c|}
    \hline
    Group & \multicolumn{7}{|c|}{Grigorchuk group} \\
    \hline
    Reduced word length & 100 & 200 & 500 & 1000 & 2000 & 5000 & 10000 \\
    \hline
    Average ratio $\mathcal{R}_g$
&0.47 
&0.41 
&0.38
&0.35
&0.33
&0.30
&0.32\\
%&0.469219 
%&0.414375 
%&0.383242
%&0.351836
%&0.334453
%&0.300576
%&0.319385 \\
    \hline 
    Average portrait depth 
&5.49
&6.18
&6.97
&7.64
&8.11
&8.95
&9.42 \\

    \hline
\end{tabular}
\caption{Variation in average $\mathcal{R}_g$ and average $\partial(g)$ against reduced word length for Grigorchuk group. Each average is calculated using 100 randomly generated elements for the respective word length. } 
\label{ratio-grigrochuk}
\end{table}

%%%%%%%%%%% END RATIO TABLE %%%%%%%%%%%%%%%%%%%%%%%%

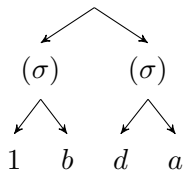
\begin{figure}
    \centering
\begin{forest}
   [ [$(\sigma)$ [1]
               [$b$]
                   ]   
            [$(\sigma)$ [$d$][$a$]]]  
\end{forest}
\caption{Nucleus portrait for $acab \in \G$}
\label{fig:nucleus portrait example}
\end{figure}

%%%%%%%%%% Complexity functions for contracting groups %%%%%%%%%%

\subsection{Computations with nucleus portraits}
\label{ssec:using_contracting}

In this section, we show that the nucleus portraits can be used effectively for computations in contracting groups. Following a description of performing basic computations using nucleus portraits, we provide a specific simple example of deploying contracting groups as platforms for AAG cryptosystem. Each step of the key-generation process is demonstrated using examples from Grigorchuk group. The described routines are implemented in the GitHub repository~\cite{kahrobaei_ms:lba_github}.

Throughout this section let $G$ be a contracting group with nucleus $\N$ and let $g,h\in G$ be arbitrary elements given by their portraits $\pi(g)$ and $\pi(h)$. Below we show how to construct the portraits of $gh$ and $g^{-1}$.

%\vspace{1cm}

For our computations we will represent the nucleus portraits of elements of contracting groups as nested lists defined as follows (in accordance with Definition~\ref{def:portrait} and as implemented in \verb|AutomPortrait| function from the \verb|AutomGrp| package~\cite{muntyan_s:automgrp}): 
\begin{itemize}
    \item If $g \in \N $ then
    
    \verb|AutomPortrait|$(g)$ = $[g]$,
    
    \item Otherwise, for $g =(g_1,\ldots,g_d)\sigma$,
    
    \verb|AutomPortrait|$(g)$ = [$\sigma$, \verb|AutomPortrait|$(g_1), \ldots,  \verb|AutomPortrait|(g_d)$].
\end{itemize}

The recursive algorithms for computing the product and the inverse of nucleus portraits are given in Algorithm~\ref{alg:portrait_product} and Algorithm~\ref{alg:portrait_inverse}, respectively. In the following, we give their informal descriptions and obtain bounds for their complexity.

\subsubsection*{Algorithm 1: Multiplication of two nucleus portraits $p=\pi(g)$ and $q=\pi(h)$.}
    \begin{enumerate}
        \item Precompute the decompositions of nucleus elements on the first level of the tree.
        \item Precompute nucleus portraits for the pairwise products $n_1n_2$ for $n_1,n_2\in\N$.
        \item If both $p$ and $q$ have length 1, so that $p=[g], q=[h]$, $g,h\in\N$, then $\pi(gh)$ was precomputed in the previous step.
        \item If exactly one of $p$ and $q$ has length 1, say, $p=[g]$ for $g\in\N$, then replace it with its decomposition computed in Step~(a): 
        \[\big[\sigma_g,[g|_1],[g|_2],\ldots,[g|_d]\big],\] 
        where $\sigma_g$ is the permutation of $X$ induced by $g$, and $[g|_i]$'s are nucleus portraits of the sections of $g$ (which also belong to $\N$). Then we can assume that the portraits of $p$ and $q$ have a form
        \[\begin{array}{l}
        p=[\sigma_g,p_1,p_2,\ldots,p_d],\\
        q=[\sigma_h,q_1,q_2,\ldots,q_d],\\
        \end{array}\]
        where $p_i$'s and $q_i$'s are the portraits of the sections of $g$ and $h$, respectively, on the first level.
        \item Compute the portrait (not necessarily the nucleus portrait) of $gh$ recursively as
        \begin{multline*}
            \textsc{Product}(p,q)\\=[\sigma_g\sigma_h, \textsc{Product}(p_1,q_{\sigma_g(1)}), \textsc{Product}(p_2,q_{\sigma_g(2)}),\ldots,\textsc{Product}(p_d,q_{\sigma_g(d)})].
        \end{multline*}
        \item (Portrait pruning) The portrait obtained in the previous step may not be the nucleus portrait of $gh$ as it may require some \emph{pruning}: For each tuple of $d$ consecutive leaves $(n_1,n_2,\ldots,n_d)$ whose labels are in $\N$ that are children of the same vertex $v$ check if there is an element $n\in\N$ whose wreath recursion at the first level coincides with $(n_1,n_2,\ldots,n_d)\sigma_v$, where $\sigma_v$ is the permutation of $gh$ at vertex $v$. If this is the case, delete leaves $(n_1,n_2,\ldots,n_d)$ from the portrait of $gh$ and label the new leaf at $v$ by $n$.
        \item Repeat the last step while possible.
        \item The obtained portrait is the nucleus portrait $\pi(gh)$ of $gh$.
    \end{enumerate}
\subsubsection*{Algorithm 2: Inverse of nucleus portrait $p=\pi(g)$ (see Figure~\ref{fig:portrait_inverse} for illustration).}
    \begin{enumerate}
        \item Act on the portrait of $p$ by permutations that are the labels of the internal vertices. Namely, at each internal vertex $v$ of $p$ labeled by a permutation $\sigma$ permute the portraits attached to this vertex according to this permutation.
        \item Replace all the labels (both internal labels and the labels of the leafs) by their inverses. Note that the nucleus is always closed under taking inverses since it is the self-similar closure of all the cycles in the automaton of the whole group.
        \item The obtained portrait is the nucleus portrait $\pi(g^{-1})$ of $g^{-1}$.
    \end{enumerate}

For complexity estimates we start from an easy lemma whose proof we leave as an exercise.

\begin{lemma}
\label{lem:non-terminal-nodes}
    Let $T$ be a finite rooted tree in which every vertex has either $d$ or $0$ children and let $l$ and $i$ denote the numbers of its terminal (with 0 children) and non-terminal (with $d$ children) nodes, respectively. Then $i = \frac{l-1}{d-1}$.
\end{lemma}

First of all, this lemma gives a natural way to compute the memory required to store a nucleus portrait.

\begin{corollary}
\label{cor:memory}
    Let $G$ be a contracting group with nucleus $\N$ acting on $T_d$. The amount of memory (in bits) required to store the contracting portrait $\pi(g)$ of $g\in G$ with the boundary size $s(g)$ is equal to 
    \begin{equation}
        \label{eqn:portrait_size}
        \left(\lceil\log_2|\N|\rceil+\frac{d(\lceil\log_2d\rceil+2)}{d-1}\right)s(g)+o(s(g)),\quad s(g)\to\infty.
    \end{equation}
\end{corollary}

\begin{proof}
    We observe that the amount of memory required for a portrait with boundary size $s(g)$ is given by
    \begin{equation}
        \label{eqn:portrait_size2}
        s(g)\lceil\log_2|\N|\rceil+\frac{s(g)-1}{d-1}d\lceil\log_2d\rceil+2\left(\frac{ds(g)-1}{d-1}\right)+o\left(\frac{ds(g)-1}{d-1}\right),\quad s(g)\to\infty.
    \end{equation}

    Indeed, the first term in expression~\eqref{eqn:portrait_size2} is responsible for storing the labels of $s(g)$ leaves of the portrait (which come from the set $\N$). The second term comes for storing the labels of $\frac{ds(g)-1}{d-1}$ internal vertices of the portrait (coming from the set $\Sym(d)$). Finally, the last 2 terms come from the balanced parenthesis encoding of a rooted tree with $\frac{ds(g)-1}{d-1}$ vertices.

    The expression~\eqref{eqn:portrait_size} is a trivial simplification of~\eqref{eqn:portrait_size2}. 
\end{proof}

For practical purposes, we provide the empirical relation between the length of the word representing an element of a contracting group and the average contracting portrait boundary size, as well as the average time required to compute these portraits. Figure~\ref{fig:grigorchuk_hist} describes the results for the Grigorchuk group, Figure~\ref{fig:basilica_hist} for the Basilica group, and Figure~\ref{fig:7basilica_hist} for the 7-Basilica group  (all for words of lengths 500, 1000, and 5000).

\tikzset{
        -,  
        >=stealth',
        node distance=3cm, 
        every state/.style={thick, fill=gray!10}, 
        initial text=$ $,
        }

\begin{figure}[H]
\centering
Word length 500:\\
\begin{minipage}[t]{9cm}
\vspace*{0mm}
\begin{tikzpicture}
\begin{axis}[
    title={},
    xlabel={Boundary size},
    ylabel={Frequency},
    ymajorgrids=true,
    grid style={dashed,gray!40},
    width=9cm,
    height=6.3cm,
    ymin=0,
    xtick distance=25,
    ytick distance=5,
]

\addplot[
    ybar,
    hist={
        bins=10
    },
    fill=blue!60,
    draw=black
]         table [row sep=\\,y index=0] {
            data\\
40\\ 47\\ 54\\ 36\\ 54\\ 41\\ 45\\ 45\\ 43\\ 41\\ 37\\ 55\\ 48\\ 50\\ 43\\ 36\\ 57\\ 43\\ 46\\ 42\\ 68\\ 51\\
35\\ 47\\ 50\\ 48\\ 52\\ 50\\ 53\\ 50\\ 40\\ 35\\ 49\\ 46\\ 46\\ 49\\ 44\\ 50\\ 49\\ 40\\ 50\\ 52\\ 56\\ 42\\
49\\ 34\\ 34\\ 41\\ 45\\ 55\\ 50\\ 57\\ 51\\ 59\\ 45\\ 61\\ 49\\ 55\\ 57\\ 50\\ 54\\ 46\\ 48\\ 48\\ 52\\ 41\\
49\\ 40\\ 44\\ 46\\ 54\\ 41\\ 39\\ 39\\ 45\\ 59\\ 49\\ 49\\ 54\\ 40\\ 45\\ 43\\ 48\\ 50\\ 50\\ 57\\ 47\\ 45\\
35\\ 49\\ 52\\ 52\\ 51\\ 46\\ 49\\ 40\\ 42\\ 50\\ 54\\ 48\\
    };
\end{axis}
\end{tikzpicture}
\end{minipage}
\begin{minipage}[t]{7cm}
\vspace*{0mm}
%Grigorchuk 500 boundary size
\begin{tikzpicture}
\begin{axis}[
    ytick={1},
    yticklabels={Boundary size\phantom(},
    yticklabel style = {align=center, font=\small, rotate=90},
    height=3cm,
    width=7cm,
    grid=major,
    grid style={dashed, gray!30}
    ]
    \addplot+ [boxplot, fill=blue!60]
        table [row sep=\\,y index=0] {
            data\\
40\\ 47\\ 54\\ 36\\ 54\\ 41\\ 45\\ 45\\ 43\\ 41\\ 37\\ 55\\ 48\\ 50\\ 43\\ 36\\ 57\\ 43\\ 46\\ 42\\ 68\\ 51\\
35\\ 47\\ 50\\ 48\\ 52\\ 50\\ 53\\ 50\\ 40\\ 35\\ 49\\ 46\\ 46\\ 49\\ 44\\ 50\\ 49\\ 40\\ 50\\ 52\\ 56\\ 42\\
49\\ 34\\ 34\\ 41\\ 45\\ 55\\ 50\\ 57\\ 51\\ 59\\ 45\\ 61\\ 49\\ 55\\ 57\\ 50\\ 54\\ 46\\ 48\\ 48\\ 52\\ 41\\
49\\ 40\\ 44\\ 46\\ 54\\ 41\\ 39\\ 39\\ 45\\ 59\\ 49\\ 49\\ 54\\ 40\\ 45\\ 43\\ 48\\ 50\\ 50\\ 57\\ 47\\ 45\\
35\\ 49\\ 52\\ 52\\ 51\\ 46\\ 49\\ 40\\ 42\\ 50\\ 54\\ 48\\
    };
\end{axis}
\end{tikzpicture}\\[0.4cm]
%Grigorchuk 500: time
\begin{tikzpicture}
\begin{axis}[
    ytick={1},
    yticklabels={Time (ms)},
    yticklabel style = {align=center, font=\small, rotate=90},
    height=3cm,
    width=7cm,
    grid=major,
    grid style={dashed, gray!30}
    ]
    \addplot+ [boxplot, fill=blue!60]
        table [row sep=\\,y index=0] {
            data\\
78\\ 94\\ 94\\ 78\\ 156\\ 94\\ 125\\ 93\\ 94\\ 78\\ 78\\ 79\\ 109\\ 94\\ 140\\ 78\\ 94\\ 141\\ 109\\ 110\\
187\\ 125\\ 63\\ 125\\ 171\\ 79\\ 109\\ 109\\ 141\\ 141\\ 109\\ 109\\ 125\\ 141\\ 109\\ 94\\ 94\\ 78\\ 109\\
94\\ 78\\ 110\\ 109\\ 78\\ 94\\ 78\\ 94\\ 78\\ 94\\ 78\\ 78\\ 140\\ 110\\ 109\\ 125\\ 110\\ 125\\ 93\\ 78\\
94\\ 94\\ 94\\ 140\\ 125\\ 94\\ 141\\ 125\\ 109\\ 109\\ 157\\ 156\\ 78\\ 63\\ 62\\ 94\\ 94\\ 62\\ 109\\ 125\\
94\\ 125\\ 94\\ 78\\ 109\\ 79\\ 140\\ 156\\ 110\\ 125\\ 109\\ 94\\ 109\\ 110\\ 78\\ 94\\ 78\\ 78\\ 94\\ 109\\
125\\
    };
\end{axis}
\end{tikzpicture}
\end{minipage}\\
\hrulefill\\
Word length 1000:\\
\begin{minipage}[t]{9cm}
\vspace*{0mm}
\begin{tikzpicture}
\begin{axis}[
    title={},
    xlabel={Boundary size},
    ylabel={Frequency},
    ymajorgrids=true,
    grid style={dashed,gray!40},
    width=9cm,
    height=6.3cm,
    ymin=0,
    xtick distance=25,
    ytick distance=5,
]

\addplot[
    ybar,
    hist={
        bins=10
    },
    fill=blue!60,
    draw=black
]         table [row sep=\\,y index=0] {
            data\\
62\\ 86\\ 80\\ 66\\ 68\\ 65\\ 61\\ 48\\ 79\\ 67\\ 65\\ 75\\ 76\\ 74\\ 58\\ 64\\ 70\\ 61\\ 64\\ 67\\ 67\\ 67\\
62\\ 65\\ 74\\ 67\\ 83\\ 56\\ 72\\ 61\\ 69\\ 69\\ 52\\ 62\\ 56\\ 86\\ 50\\ 61\\ 67\\ 71\\ 62\\ 69\\ 53\\ 60\\
73\\ 60\\ 70\\ 68\\ 62\\ 78\\ 67\\ 81\\ 84\\ 59\\ 71\\ 72\\ 48\\ 67\\ 62\\ 59\\ 61\\ 52\\ 57\\ 76\\ 55\\ 58\\
67\\ 68\\ 55\\ 54\\ 75\\ 57\\ 75\\ 66\\ 63\\ 54\\ 57\\ 65\\ 57\\ 68\\ 55\\ 76\\ 69\\ 58\\ 57\\ 75\\ 74\\ 65\\
71\\ 79\\ 69\\ 67\\ 75\\ 65\\ 65\\ 55\\ 78\\ 64\\ 75\\ 79\\
    };
\end{axis}
\end{tikzpicture}
\end{minipage}
\begin{minipage}[t]{7cm}
\vspace*{0mm}
%Basilica 1000 boundary size
\begin{tikzpicture}
\begin{axis}[
    ytick={1},
    yticklabels={Boundary size\phantom(},
    yticklabel style = {align=center, font=\small, rotate=90},
    height=3cm,
    width=7cm,
    grid=major,
    grid style={dashed, gray!30}
    ]
    \addplot+ [boxplot, fill=blue!60]
        table [row sep=\\,y index=0] {
            data\\
62\\ 86\\ 80\\ 66\\ 68\\ 65\\ 61\\ 48\\ 79\\ 67\\ 65\\ 75\\ 76\\ 74\\ 58\\ 64\\ 70\\ 61\\ 64\\ 67\\ 67\\ 67\\
62\\ 65\\ 74\\ 67\\ 83\\ 56\\ 72\\ 61\\ 69\\ 69\\ 52\\ 62\\ 56\\ 86\\ 50\\ 61\\ 67\\ 71\\ 62\\ 69\\ 53\\ 60\\
73\\ 60\\ 70\\ 68\\ 62\\ 78\\ 67\\ 81\\ 84\\ 59\\ 71\\ 72\\ 48\\ 67\\ 62\\ 59\\ 61\\ 52\\ 57\\ 76\\ 55\\ 58\\
67\\ 68\\ 55\\ 54\\ 75\\ 57\\ 75\\ 66\\ 63\\ 54\\ 57\\ 65\\ 57\\ 68\\ 55\\ 76\\ 69\\ 58\\ 57\\ 75\\ 74\\ 65\\
71\\ 79\\ 69\\ 67\\ 75\\ 65\\ 65\\ 55\\ 78\\ 64\\ 75\\ 79\\
    };
\end{axis}
\end{tikzpicture}\\[0.4cm]
%Basilica 1000: time
\begin{tikzpicture}
\begin{axis}[
    ytick={1},
    yticklabels={Time (ms)},
    yticklabel style = {align=center, font=\small, rotate=90},
    height=3cm,
    width=7cm,
    grid=major,
    grid style={dashed, gray!30}
    ]
    \addplot+ [boxplot, fill=blue!60]
        table [row sep=\\,y index=0] {
            data\\
234\\ 329\\ 343\\ 547\\ 266\\ 328\\ 328\\ 250\\ 375\\ 328\\ 328\\ 297\\ 391\\ 312\\ 344\\ 297\\ 328\\ 266\\
343\\ 313\\ 281\\ 250\\ 219\\ 344\\ 281\\ 234\\ 407\\ 359\\ 312\\ 219\\ 375\\ 344\\ 297\\ 250\\ 234\\ 360\\
390\\ 203\\ 328\\ 235\\ 250\\ 234\\ 281\\ 235\\ 219\\ 218\\ 282\\ 375\\ 203\\ 265\\ 250\\ 313\\ 375\\ 281\\
375\\ 297\\ 312\\ 235\\ 265\\ 204\\ 218\\ 219\\ 266\\ 250\\ 187\\ 219\\ 234\\ 235\\ 218\\ 204\\ 250\\ 312\\
266\\ 234\\ 234\\ 235\\ 219\\ 359\\ 234\\ 344\\ 281\\ 328\\ 219\\ 250\\ 219\\ 234\\ 297\\ 266\\ 250\\ 312\\
219\\ 219\\ 234\\ 422\\ 313\\ 296\\ 282\\ 234\\ 328\\ 313\\
    };
\end{axis}
\end{tikzpicture}
\end{minipage}\\
\hrulefill\\
Word length 5000:\\
\begin{minipage}[t]{9cm}
\vspace*{0mm}
\begin{tikzpicture}
\begin{axis}[
    title={},
    xlabel={Boundary size},
    ylabel={Frequency},
    ymajorgrids=true,
    grid style={dashed,gray!40},
    width=9cm,
    height=6.3cm,
    ymin=0,
    xtick distance=25,
    ytick distance=5,
]

\addplot[
    ybar,
    hist={
        bins=10
    },
    fill=blue!60,
    draw=black
]         table [row sep=\\,y index=0] {
            data\\
125\\ 139\\ 139\\ 151\\ 152\\ 143\\ 140\\ 157\\ 154\\ 143\\ 147\\ 149\\ 141\\ 142\\ 146\\ 146\\ 157\\ 144\\
139\\ 162\\ 160\\ 145\\ 162\\ 163\\ 139\\ 152\\ 132\\ 151\\ 126\\ 155\\ 143\\ 167\\ 166\\ 127\\ 146\\ 151\\
154\\ 158\\ 129\\ 168\\ 146\\ 133\\ 138\\ 137\\ 151\\ 132\\ 154\\ 129\\ 132\\ 150\\ 153\\ 141\\ 152\\ 148\\
141\\ 168\\ 158\\ 138\\ 145\\ 117\\ 148\\ 141\\ 128\\ 120\\ 142\\ 137\\ 162\\ 153\\ 161\\ 148\\ 144\\ 130\\
129\\ 145\\ 137\\ 133\\ 157\\ 134\\ 166\\ 168\\ 145\\ 154\\ 147\\ 147\\ 155\\ 141\\ 128\\ 164\\ 156\\ 158\\
147\\ 169\\ 157\\ 140\\ 136\\ 147\\ 136\\ 146\\ 123\\ 147\\
    };
\end{axis}
\end{tikzpicture}
\end{minipage}
\begin{minipage}[t]{7cm}
\vspace*{0mm}
%Basilica 5000 boundary size
\begin{tikzpicture}
\begin{axis}[
    ytick={1},
    yticklabels={Boundary size\phantom(},
    yticklabel style = {align=center, font=\small, rotate=90},
    height=3cm,
    width=7cm,
    grid=major,
    grid style={dashed, gray!30}
    ]
    \addplot+ [boxplot, fill=blue!60]
        table [row sep=\\,y index=0] {
            data\\
125\\ 139\\ 139\\ 151\\ 152\\ 143\\ 140\\ 157\\ 154\\ 143\\ 147\\ 149\\ 141\\ 142\\ 146\\ 146\\ 157\\ 144\\
139\\ 162\\ 160\\ 145\\ 162\\ 163\\ 139\\ 152\\ 132\\ 151\\ 126\\ 155\\ 143\\ 167\\ 166\\ 127\\ 146\\ 151\\
154\\ 158\\ 129\\ 168\\ 146\\ 133\\ 138\\ 137\\ 151\\ 132\\ 154\\ 129\\ 132\\ 150\\ 153\\ 141\\ 152\\ 148\\
141\\ 168\\ 158\\ 138\\ 145\\ 117\\ 148\\ 141\\ 128\\ 120\\ 142\\ 137\\ 162\\ 153\\ 161\\ 148\\ 144\\ 130\\
129\\ 145\\ 137\\ 133\\ 157\\ 134\\ 166\\ 168\\ 145\\ 154\\ 147\\ 147\\ 155\\ 141\\ 128\\ 164\\ 156\\ 158\\
147\\ 169\\ 157\\ 140\\ 136\\ 147\\ 136\\ 146\\ 123\\ 147\\
    };
\end{axis}
\end{tikzpicture}\\[0.4cm]
%Basilica 5000: time
\begin{tikzpicture}
\begin{axis}[
    ytick={1},
    yticklabels={Time (ms)},
    yticklabel style = {align=center, font=\small, rotate=90},
    height=3cm,
    width=7cm,
    grid=major,
    grid style={dashed, gray!30}
    ]
    \addplot+ [boxplot, fill=blue!60]
        table [row sep=\\,y index=0] {
            data\\
2985\\ 2937\\ 3359\\ 3485\\ 3484\\ 3266\\ 3406\\ 3453\\ 3703\\ 3657\\ 3687\\ 3844\\ 3797\\ 3453\\ 3562\\
3438\\ 3750\\ 3406\\ 3359\\ 3672\\ 3485\\ 3422\\ 3250\\ 3875\\ 3593\\ 3547\\ 3641\\ 3265\\ 3016\\ 3250\\
3109\\ 3704\\ 3468\\ 3141\\ 3578\\ 3203\\ 3047\\ 3656\\ 2985\\ 3203\\ 3390\\ 2938\\ 2312\\ 3125\\ 2782\\
2672\\ 2968\\ 2735\\ 2484\\ 3359\\ 3750\\ 3219\\ 3188\\ 3359\\ 2750\\ 3500\\ 3453\\ 3016\\ 3047\\ 2812\\
3594\\ 3062\\ 3094\\ 3016\\ 2969\\ 3625\\ 3421\\ 3313\\ 3750\\ 3375\\ 5047\\ 6031\\ 6344\\ 6844\\ 6718\\
7672\\ 7688\\ 3172\\ 3468\\ 3375\\ 3016\\ 2750\\ 3422\\ 3500\\ 3281\\ 3797\\ 3219\\ 3250\\ 2890\\ 3219\\
3406\\ 3328\\ 3250\\ 3266\\ 2875\\ 3125\\ 3016\\ 2922\\ 2843\\ 2907\\
    };
\end{axis}
\end{tikzpicture}
\end{minipage}
\caption{The histograms of the sizes of boundaries of contracting portraits of 100 elements in Grigorchuk group $\G$ represented by words of length 500 (top), 1000 (middle), and 5000 (bottom), and corresponding boxplots for the boundary sizes and times required to compute them (in milliseconds).\label{fig:grigorchuk_hist}}
\end{figure}

%%%%%%%%%%%%%%%% Basilica %%%%%%%%%%%%%%%%%%%%%%%%%%%%%%%%%%
\begin{figure}[H]
\centering
Word length 500:\\
\begin{minipage}[t]{9cm}
\vspace*{0mm}
\begin{tikzpicture}
\begin{axis}[
    title={},
    xlabel={Boundary size},
    ylabel={Frequency},
    ymajorgrids=true,
    grid style={dashed,gray!40},
    width=9cm,
    height=6.3cm,
    ymin=0,
    xtick distance=25,
    ytick distance=5,
]

\addplot[
    ybar,
    hist={
        bins=10
    },
    fill=blue!60,
    draw=black
]         table [row sep=\\,y index=0] {
            data\\
88\\ 91\\ 107\\ 90\\ 116\\ 114\\ 132\\ 101\\ 138\\ 119\\ 107\\ 120\\ 92\\ 114\\ 125\\ 103\\ 119\\ 125\\ 102\\
111\\ 127\\ 115\\ 108\\ 102\\ 90\\ 124\\ 114\\ 90\\ 94\\ 140\\ 81\\ 105\\ 99\\ 111\\ 82\\ 142\\ 98\\ 123\\
98\\ 98\\ 106\\ 97\\ 111\\ 110\\ 115\\ 122\\ 145\\ 102\\ 98\\ 102\\ 97\\ 83\\ 95\\ 144\\ 118\\ 95\\ 74\\
120\\ 107\\ 134\\ 96\\ 112\\ 102\\ 122\\ 112\\ 100\\ 97\\ 96\\ 137\\ 99\\ 106\\ 97\\ 100\\ 111\\ 110\\ 132\\
122\\ 123\\ 110\\ 117\\ 131\\ 98\\ 141\\ 94\\ 93\\ 114\\ 106\\ 104\\ 96\\ 122\\ 82\\ 98\\ 125\\ 88\\ 122\\
127\\ 127\\ 125\\ 85\\ 96\\
    };
\end{axis}
\end{tikzpicture}
\end{minipage}
\begin{minipage}[t]{7cm}
\vspace*{0mm}
%Basilica 500 boundary size
\begin{tikzpicture}
\begin{axis}[
    ytick={1},
    yticklabels={Boundary size\phantom(},
    yticklabel style = {align=center, font=\small, rotate=90},
    height=3cm,
    width=7cm,
    grid=major,
    grid style={dashed, gray!30}
    ]
    \addplot+ [boxplot, fill=blue!60]
        table [row sep=\\,y index=0] {
            data\\
88\\ 91\\ 107\\ 90\\ 116\\ 114\\ 132\\ 101\\ 138\\ 119\\ 107\\ 120\\ 92\\ 114\\ 125\\ 103\\ 119\\ 125\\ 102\\
111\\ 127\\ 115\\ 108\\ 102\\ 90\\ 124\\ 114\\ 90\\ 94\\ 140\\ 81\\ 105\\ 99\\ 111\\ 82\\ 142\\ 98\\ 123\\
98\\ 98\\ 106\\ 97\\ 111\\ 110\\ 115\\ 122\\ 145\\ 102\\ 98\\ 102\\ 97\\ 83\\ 95\\ 144\\ 118\\ 95\\ 74\\
120\\ 107\\ 134\\ 96\\ 112\\ 102\\ 122\\ 112\\ 100\\ 97\\ 96\\ 137\\ 99\\ 106\\ 97\\ 100\\ 111\\ 110\\ 132\\
122\\ 123\\ 110\\ 117\\ 131\\ 98\\ 141\\ 94\\ 93\\ 114\\ 106\\ 104\\ 96\\ 122\\ 82\\ 98\\ 125\\ 88\\ 122\\
127\\ 127\\ 125\\ 85\\ 96\\
    };
\end{axis}
\end{tikzpicture}\\[0.4cm]
%Basilica 500: time
\begin{tikzpicture}
\begin{axis}[
    ytick={1},
    yticklabels={Time (ms)},
    yticklabel style = {align=center, font=\small, rotate=90},
    height=3cm,
    width=7cm,
    grid=major,
    grid style={dashed, gray!30}
    ]
    \addplot+ [boxplot, fill=blue!60]
        table [row sep=\\,y index=0] {
            data\\
78\\ 78\\ 94\\ 78\\ 125\\ 188\\ 125\\ 109\\ 125\\ 125\\ 94\\ 125\\ 94\\ 125\\ 109\\ 109\\ 110\\ 125\\ 94\\
109\\ 125\\ 109\\ 110\\ 109\\ 94\\ 125\\ 109\\ 94\\ 94\\ 125\\ 78\\ 94\\ 93\\ 110\\ 93\\ 141\\ 109\\ 110\\
109\\ 94\\ 109\\ 157\\ 140\\ 156\\ 235\\ 219\\ 171\\ 172\\ 157\\ 125\\ 125\\ 78\\ 109\\ 141\\ 109\\ 109\\
79\\ 140\\ 110\\ 125\\ 109\\ 109\\ 125\\ 125\\ 125\\ 125\\ 94\\ 109\\ 141\\ 94\\ 94\\ 109\\ 94\\ 125\\ 109\\
125\\ 125\\ 109\\ 125\\ 125\\ 110\\ 94\\ 125\\ 109\\ 94\\ 172\\ 93\\ 110\\ 93\\ 125\\ 79\\ 93\\ 125\\ 94\\
125\\ 125\\ 125\\ 125\\ 94\\ 94\\
    };
\end{axis}
\end{tikzpicture}
\end{minipage}\\
\hrulefill\\
Word length 1000:\\
\begin{minipage}[t]{9cm}
\vspace*{0mm}
\begin{tikzpicture}
\begin{axis}[
    title={},
    xlabel={Boundary size},
    ylabel={Frequency},
    ymajorgrids=true,
    grid style={dashed,gray!40},
    width=9cm,
    height=6.3cm,
    ymin=0,
    xtick distance=25,
    ytick distance=5,
]

\addplot[
    ybar,
    hist={
        bins=10
    },
    fill=blue!60,
    draw=black
]         table [row sep=\\,y index=0] {
            data\\
208\\ 177\\ 154\\ 199\\ 155\\ 147\\ 164\\ 161\\ 162\\ 191\\ 151\\ 157\\ 177\\ 155\\ 202\\ 128\\ 162\\ 121\\
189\\ 154\\ 146\\ 151\\ 166\\ 186\\ 170\\ 175\\ 151\\ 172\\ 143\\ 151\\ 175\\ 205\\ 149\\ 182\\ 195\\ 165\\
171\\ 219\\ 148\\ 155\\ 170\\ 162\\ 154\\ 240\\ 163\\ 215\\ 204\\ 181\\ 172\\ 170\\ 197\\ 223\\ 166\\ 150\\
156\\ 156\\ 192\\ 156\\ 185\\ 146\\ 165\\ 182\\ 206\\ 189\\ 160\\ 154\\ 197\\ 175\\ 173\\ 141\\ 172\\ 153\\
179\\ 138\\ 180\\ 174\\ 152\\ 183\\ 172\\ 173\\ 172\\ 162\\ 185\\ 179\\ 153\\ 188\\ 175\\ 177\\ 140\\ 209\\
148\\ 174\\ 168\\ 192\\ 173\\ 196\\ 171\\ 206\\ 182\\ 168\\
    };
\end{axis}
\end{tikzpicture}
\end{minipage}
\begin{minipage}[t]{7cm}
\vspace*{0mm}
%Basilica 1000 boundary size
\begin{tikzpicture}
\begin{axis}[
    ytick={1},
    yticklabels={Boundary size\phantom(},
    yticklabel style = {align=center, font=\small, rotate=90},
    height=3cm,
    width=7cm,
    grid=major,
    grid style={dashed, gray!30}
    ]
    \addplot+ [boxplot, fill=blue!60]
        table [row sep=\\,y index=0] {
            data\\
208\\ 177\\ 154\\ 199\\ 155\\ 147\\ 164\\ 161\\ 162\\ 191\\ 151\\ 157\\ 177\\ 155\\ 202\\ 128\\ 162\\ 121\\
189\\ 154\\ 146\\ 151\\ 166\\ 186\\ 170\\ 175\\ 151\\ 172\\ 143\\ 151\\ 175\\ 205\\ 149\\ 182\\ 195\\ 165\\
171\\ 219\\ 148\\ 155\\ 170\\ 162\\ 154\\ 240\\ 163\\ 215\\ 204\\ 181\\ 172\\ 170\\ 197\\ 223\\ 166\\ 150\\
156\\ 156\\ 192\\ 156\\ 185\\ 146\\ 165\\ 182\\ 206\\ 189\\ 160\\ 154\\ 197\\ 175\\ 173\\ 141\\ 172\\ 153\\
179\\ 138\\ 180\\ 174\\ 152\\ 183\\ 172\\ 173\\ 172\\ 162\\ 185\\ 179\\ 153\\ 188\\ 175\\ 177\\ 140\\ 209\\
148\\ 174\\ 168\\ 192\\ 173\\ 196\\ 171\\ 206\\ 182\\ 168\\
    };
\end{axis}
\end{tikzpicture}\\[0.4cm]
%Basilica 1000: time
\begin{tikzpicture}
\begin{axis}[
    ytick={1},
    yticklabels={Time (ms)},
    yticklabel style = {align=center, font=\small, rotate=90},
    height=3cm,
    width=7cm,
    grid=major,
    grid style={dashed, gray!30}
    ]
    \addplot+ [boxplot, fill=blue!60]
        table [row sep=\\,y index=0] {
            data\\
390\\ 344\\ 313\\ 406\\ 297\\ 297\\ 328\\ 297\\ 328\\ 375\\ 328\\ 344\\ 343\\ 375\\ 375\\ 313\\ 297\\ 265\\
375\\ 297\\ 297\\ 297\\ 344\\ 375\\ 344\\ 343\\ 313\\ 344\\ 328\\ 297\\ 406\\ 390\\ 313\\ 359\\ 407\\ 328\\
343\\ 422\\ 297\\ 328\\ 360\\ 312\\ 313\\ 468\\ 313\\ 406\\ 406\\ 360\\ 344\\ 328\\ 359\\ 406\\ 313\\ 297\\
343\\ 282\\ 359\\ 297\\ 359\\ 297\\ 344\\ 375\\ 437\\ 391\\ 313\\ 328\\ 375\\ 328\\ 359\\ 360\\ 343\\ 313\\
359\\ 281\\ 344\\ 360\\ 281\\ 328\\ 406\\ 328\\ 375\\ 375\\ 375\\ 453\\ 375\\ 516\\ 672\\ 750\\ 625\\ 844\\
547\\ 656\\ 812\\ 953\\ 938\\ 703\\ 594\\ 594\\ 750\\ 703\\
    };
\end{axis}
\end{tikzpicture}
\end{minipage}\\
\hrulefill\\
Word length 5000:\\
\begin{minipage}[t]{9cm}
\vspace*{0mm}
\begin{tikzpicture}
\begin{axis}[
    title={},
    xlabel={Boundary size},
    ylabel={Frequency},
    ymajorgrids=true,
    grid style={dashed,gray!40},
    width=9cm,
    height=6.3cm,
    ymin=0,
    xtick distance=25,
    ytick distance=5,
]

\addplot[
    ybar,
    hist={
        bins=10
    },
    fill=blue!60,
    draw=black
]         table [row sep=\\,y index=0] {
            data\\
508\\ 510\\ 572\\ 473\\ 526\\ 475\\ 525\\ 546\\ 500\\ 523\\ 465\\ 537\\ 476\\ 509\\ 528\\ 526\\ 561\\ 489\\
457\\ 542\\ 460\\ 471\\ 571\\ 547\\ 558\\ 459\\ 489\\ 560\\ 464\\ 478\\ 501\\ 576\\ 560\\ 513\\ 435\\ 454\\
559\\ 482\\ 482\\ 479\\ 500\\ 482\\ 584\\ 469\\ 472\\ 462\\ 540\\ 503\\ 504\\ 445\\ 486\\ 474\\ 512\\ 553\\
489\\ 485\\ 450\\ 452\\ 493\\ 467\\ 541\\ 497\\ 509\\ 540\\ 506\\ 529\\ 466\\ 496\\ 435\\ 532\\ 469\\ 499\\
522\\ 467\\ 507\\ 401\\ 513\\ 534\\ 457\\ 488\\ 429\\ 488\\ 509\\ 497\\ 478\\ 505\\ 517\\ 482\\ 491\\ 474\\
521\\ 511\\ 538\\ 482\\ 518\\ 506\\ 459\\ 505\\ 509\\ 479\\
    };
\end{axis}
\end{tikzpicture}
\end{minipage}
\begin{minipage}[t]{7cm}
\vspace*{0mm}
%Basilica 5000 boundary size
\begin{tikzpicture}
\begin{axis}[
    ytick={1},
    yticklabels={Boundary size\phantom(},
    yticklabel style = {align=center, font=\small, rotate=90},
    height=3cm,
    width=7cm,
    grid=major,
    grid style={dashed, gray!30}
    ]
    \addplot+ [boxplot, fill=blue!60]
        table [row sep=\\,y index=0] {
            data\\
508\\ 510\\ 572\\ 473\\ 526\\ 475\\ 525\\ 546\\ 500\\ 523\\ 465\\ 537\\ 476\\ 509\\ 528\\ 526\\ 561\\ 489\\
457\\ 542\\ 460\\ 471\\ 571\\ 547\\ 558\\ 459\\ 489\\ 560\\ 464\\ 478\\ 501\\ 576\\ 560\\ 513\\ 435\\ 454\\
559\\ 482\\ 482\\ 479\\ 500\\ 482\\ 584\\ 469\\ 472\\ 462\\ 540\\ 503\\ 504\\ 445\\ 486\\ 474\\ 512\\ 553\\
489\\ 485\\ 450\\ 452\\ 493\\ 467\\ 541\\ 497\\ 509\\ 540\\ 506\\ 529\\ 466\\ 496\\ 435\\ 532\\ 469\\ 499\\
522\\ 467\\ 507\\ 401\\ 513\\ 534\\ 457\\ 488\\ 429\\ 488\\ 509\\ 497\\ 478\\ 505\\ 517\\ 482\\ 491\\ 474\\
521\\ 511\\ 538\\ 482\\ 518\\ 506\\ 459\\ 505\\ 509\\ 479\\
    };
\end{axis}
\end{tikzpicture}\\[0.4cm]
%Basilica 5000: time
\begin{tikzpicture}
\begin{axis}[
    ytick={1},
    yticklabels={Time (ms)},
    yticklabel style = {align=center, font=\small, rotate=90},
    height=3cm,
    width=7cm,
    grid=major,
    grid style={dashed, gray!30}
    ]
    \addplot+ [boxplot, fill=blue!60]
        table [row sep=\\,y index=0] {
            data\\
6812\\ 8204\\ 11578\\ 9640\\ 10141\\ 9766\\ 9984\\ 10328\\ 10047\\ 10812\\ 9907\\ 10953\\ 10062\\ 10000\\
11203\\ 10782\\ 10812\\ 9547\\ 8688\\ 9906\\ 9609\\ 10547\\ 11359\\ 11250\\ 11422\\ 9094\\ 9469\\ 12187\\
9953\\ 9907\\ 11562\\ 12985\\ 11250\\ 11265\\ 8422\\ 9594\\ 11047\\ 5750\\ 9281\\ 11891\\ 10984\\ 9641\\
11062\\ 9297\\ 10469\\ 9906\\ 10016\\ 9484\\ 10609\\ 10282\\ 10078\\ 10343\\ 10282\\ 11437\\ 10297\\ 10422\\
8750\\ 7656\\ 10047\\ 9188\\ 10031\\ 9328\\ 9844\\ 11297\\ 9796\\ 11454\\ 9328\\ 11312\\ 9203\\ 10172\\
9063\\ 9609\\ 12672\\ 10391\\ 9765\\ 9125\\ 9610\\ 10015\\ 9781\\ 9329\\ 8843\\ 9625\\ 10672\\ 9844\\ 9578\\
9859\\ 9782\\ 10390\\ 10094\\ 9625\\ 10281\\ 9813\\ 10078\\ 10203\\ 10813\\ 9750\\ 8406\\ 11156\\ 10328\\
9281\\
    };
\end{axis}
\end{tikzpicture}
\end{minipage}
\caption{The histograms of the sizes of boundaries of contracting portraits of 100 elements in Basilica group $\B$ represented by words of length 500 (top), 1000 (middle), and 5000 (bottom), and corresponding boxplots for the boundary sizes and times required to compute them (in milliseconds).\label{fig:basilica_hist}}
\end{figure}

%%%%%%%%%%%%  7-Basilica %%%%%%%%%%%%%%%%%%%%%%%%%%%%%%%%%%
\begin{figure}[H]
\centering
Word length 500:\\
\begin{minipage}[t]{9cm}
\vspace*{0mm}
\begin{tikzpicture}
\begin{axis}[
    title={},
    xlabel={Boundary size},
    ylabel={Frequency},
    ymajorgrids=true,
    grid style={dashed,gray!40},
    width=9cm,
    height=6.3cm,
    ymin=0,
    xtick distance=50,
    ytick distance=5,
]

\addplot[
    ybar,
    hist={
        bins=10
    },
    fill=blue!60,
    draw=black
]         table [row sep=\\,y index=0] {
            data\\
451\\ 205\\ 391\\ 277\\ 337\\ 367\\ 349\\ 325\\ 289\\ 253\\ 259\\ 337\\ 397\\ 283\\ 355\\ 301\\ 343\\ 295\\
379\\ 295\\ 313\\ 319\\ 277\\ 325\\ 283\\ 241\\ 289\\ 271\\ 439\\ 265\\ 421\\ 277\\ 223\\ 337\\ 265\\ 319\\
397\\ 511\\ 415\\ 301\\ 319\\ 133\\ 331\\ 373\\ 247\\ 349\\ 325\\ 391\\ 163\\ 397\\ 313\\ 397\\ 361\\ 283\\
235\\ 319\\ 343\\ 307\\ 307\\ 337\\ 379\\ 349\\ 259\\ 199\\ 319\\ 307\\ 313\\ 205\\ 259\\ 319\\ 391\\ 421\\
361\\ 271\\ 217\\ 433\\ 301\\ 223\\ 445\\ 325\\ 313\\ 259\\ 325\\ 337\\ 247\\ 451\\ 391\\ 307\\ 313\\ 295\\
331\\ 361\\ 229\\ 289\\ 319\\ 337\\ 253\\ 247\\ 421\\ 313\\
    };
\end{axis}
\end{tikzpicture}
\end{minipage}
\begin{minipage}[t]{7cm}
\vspace*{0mm}
%Basilica 500 boundary size
\begin{tikzpicture}
\begin{axis}[
    ytick={1},
    yticklabels={Boundary size\phantom(},
    yticklabel style = {align=center, font=\small, rotate=90},
    height=3cm,
    width=7cm,
    grid=major,
    grid style={dashed, gray!30}
    ]
    \addplot+ [boxplot, fill=blue!60]
        table [row sep=\\,y index=0] {
            data\\
451\\ 205\\ 391\\ 277\\ 337\\ 367\\ 349\\ 325\\ 289\\ 253\\ 259\\ 337\\ 397\\ 283\\ 355\\ 301\\ 343\\ 295\\
379\\ 295\\ 313\\ 319\\ 277\\ 325\\ 283\\ 241\\ 289\\ 271\\ 439\\ 265\\ 421\\ 277\\ 223\\ 337\\ 265\\ 319\\
397\\ 511\\ 415\\ 301\\ 319\\ 133\\ 331\\ 373\\ 247\\ 349\\ 325\\ 391\\ 163\\ 397\\ 313\\ 397\\ 361\\ 283\\
235\\ 319\\ 343\\ 307\\ 307\\ 337\\ 379\\ 349\\ 259\\ 199\\ 319\\ 307\\ 313\\ 205\\ 259\\ 319\\ 391\\ 421\\
361\\ 271\\ 217\\ 433\\ 301\\ 223\\ 445\\ 325\\ 313\\ 259\\ 325\\ 337\\ 247\\ 451\\ 391\\ 307\\ 313\\ 295\\
331\\ 361\\ 229\\ 289\\ 319\\ 337\\ 253\\ 247\\ 421\\ 313\\
    };
\end{axis}
\end{tikzpicture}\\[0.4cm]
%7-Basilica 500: time
\begin{tikzpicture}
\begin{axis}[
    ytick={1},
    yticklabels={Time (ms)},
    yticklabel style = {align=center, font=\small, rotate=90},
    height=3cm,
    width=7cm,
    grid=major,
    grid style={dashed, gray!30}
    ]
    \addplot+ [boxplot, fill=blue!60]
        table [row sep=\\,y index=0] {
            data\\
500\\ 282\\ 562\\ 375\\ 391\\ 672\\ 515\\ 500\\ 438\\ 328\\ 422\\ 578\\ 625\\ 453\\ 469\\ 547\\ 531\\ 390\\
579\\ 421\\ 657\\ 718\\ 579\\ 593\\ 485\\ 375\\ 390\\ 313\\ 531\\ 375\\ 688\\ 437\\ 328\\ 438\\ 453\\ 515\\
563\\ 562\\ 454\\ 390\\ 563\\ 203\\ 453\\ 609\\ 313\\ 390\\ 375\\ 422\\ 219\\ 516\\ 437\\ 578\\ 625\\ 453\\
469\\ 360\\ 421\\ 500\\ 516\\ 375\\ 422\\ 453\\ 469\\ 281\\ 359\\ 438\\ 391\\ 296\\ 563\\ 1141\\ 1156\\
1109\\ 844\\ 969\\ 765\\ 1250\\ 1032\\ 921\\ 1563\\ 1219\\ 1078\\ 1031\\ 1266\\ 1156\\ 687\\ 1079\\ 1140\\
938\\ 1109\\ 1016\\ 1109\\ 1063\\ 859\\ 984\\ 985\\ 1078\\ 1109\\ 922\\ 1563\\ 968\\
    };
\end{axis}
\end{tikzpicture}
\end{minipage}\\
\hrulefill\\
Word length 1000:\\
\begin{minipage}[t]{9cm}
\vspace*{0mm}
\begin{tikzpicture}
\begin{axis}[
    title={},
    xlabel={Boundary size},
    ylabel={Frequency},
    ymajorgrids=true,
    grid style={dashed,gray!40},
    width=9cm,
    height=6.3cm,
    ymin=0,
    xtick distance=50,
    ytick distance=5,
]

\addplot[
    ybar,
    hist={
        bins=10
    },
    fill=blue!60,
    draw=black
]         table [row sep=\\,y index=0] {
            data\\
613\\ 421\\ 529\\ 469\\ 565\\ 385\\ 493\\ 463\\ 541\\ 529\\ 457\\ 439\\ 457\\ 553\\ 463\\ 463\\ 487\\ 601\\
595\\ 445\\ 523\\ 565\\ 547\\ 481\\ 475\\ 409\\ 493\\ 487\\ 469\\ 421\\ 505\\ 595\\ 469\\ 487\\ 481\\ 523\\
601\\ 421\\ 457\\ 493\\ 457\\ 601\\ 373\\ 517\\ 481\\ 391\\ 523\\ 481\\ 505\\ 517\\ 595\\ 457\\ 535\\ 541\\
343\\ 343\\ 523\\ 403\\ 583\\ 547\\ 457\\ 415\\ 409\\ 517\\ 463\\ 457\\ 475\\ 487\\ 493\\ 565\\ 451\\ 535\\
475\\ 541\\ 679\\ 397\\ 577\\ 487\\ 499\\ 625\\ 481\\ 559\\ 391\\ 493\\ 607\\ 559\\ 481\\ 427\\ 505\\ 583\\
517\\ 463\\ 505\\ 589\\ 553\\ 517\\ 511\\ 493\\ 541\\ 493\\
    };
\end{axis}
\end{tikzpicture}
\end{minipage}
\begin{minipage}[t]{7cm}
\vspace*{0mm}
%7-Basilica 1000 boundary size
\begin{tikzpicture}
\begin{axis}[
    ytick={1},
    yticklabels={Boundary size\phantom(},
    yticklabel style = {align=center, font=\small, rotate=90},
    height=3cm,
    width=7cm,
    grid=major,
    grid style={dashed, gray!30}
    ]
    \addplot+ [boxplot, fill=blue!60]
        table [row sep=\\,y index=0] {
            data\\
613\\ 421\\ 529\\ 469\\ 565\\ 385\\ 493\\ 463\\ 541\\ 529\\ 457\\ 439\\ 457\\ 553\\ 463\\ 463\\ 487\\ 601\\
595\\ 445\\ 523\\ 565\\ 547\\ 481\\ 475\\ 409\\ 493\\ 487\\ 469\\ 421\\ 505\\ 595\\ 469\\ 487\\ 481\\ 523\\
601\\ 421\\ 457\\ 493\\ 457\\ 601\\ 373\\ 517\\ 481\\ 391\\ 523\\ 481\\ 505\\ 517\\ 595\\ 457\\ 535\\ 541\\
343\\ 343\\ 523\\ 403\\ 583\\ 547\\ 457\\ 415\\ 409\\ 517\\ 463\\ 457\\ 475\\ 487\\ 493\\ 565\\ 451\\ 535\\
475\\ 541\\ 679\\ 397\\ 577\\ 487\\ 499\\ 625\\ 481\\ 559\\ 391\\ 493\\ 607\\ 559\\ 481\\ 427\\ 505\\ 583\\
517\\ 463\\ 505\\ 589\\ 553\\ 517\\ 511\\ 493\\ 541\\ 493\\
    };
\end{axis}
\end{tikzpicture}\\[0.4cm]
%7-Basilica 5000: time
\begin{tikzpicture}
\begin{axis}[
    ytick={1},
    yticklabels={Time (ms)},
    yticklabel style = {align=center, font=\small, rotate=90},
    height=3cm,
    width=7cm,
    grid=major,
    grid style={dashed, gray!30}
    ]
    \addplot+ [boxplot, fill=blue!60]
        table [row sep=\\,y index=0] {
            data\\
1578\\ 1297\\ 1469\\ 1203\\ 1469\\ 1297\\ 1359\\ 1110\\ 2531\\ 3437\\ 3157\\ 2484\\ 2969\\ 3265\\ 3266\\
2719\\ 2968\\ 3532\\ 4125\\ 2609\\ 3297\\ 3437\\ 3844\\ 3344\\ 3219\\ 2609\\ 2984\\ 3047\\ 2578\\ 2563\\
3000\\ 3375\\ 2734\\ 3266\\ 2875\\ 3281\\ 3610\\ 2828\\ 2875\\ 2984\\ 2781\\ 3141\\ 2453\\ 3438\\ 3187\\
2438\\ 3656\\ 2953\\ 3047\\ 3390\\ 3329\\ 2859\\ 3453\\ 2828\\ 2422\\ 2359\\ 2907\\ 2328\\ 2844\\ 3250\\
2609\\ 2469\\ 2453\\ 3078\\ 2891\\ 2828\\ 2750\\ 2750\\ 2765\\ 3125\\ 2782\\ 3328\\ 3250\\ 3265\\ 3860\\
2343\\ 2844\\ 2938\\ 2718\\ 3219\\ 3063\\ 3078\\ 2375\\ 3594\\ 3578\\ 3484\\ 2641\\ 2703\\ 3578\\ 3812\\
3516\\ 2672\\ 3359\\ 3313\\ 2937\\ 3141\\ 2906\\ 3125\\ 3047\\ 3109\\
};
\end{axis}
\end{tikzpicture}
\end{minipage}\\
\hrulefill\\
Word length 5000:\\
\begin{minipage}[t]{9cm}
\vspace*{0mm}
\begin{tikzpicture}
\begin{axis}[
    title={},
    xlabel={Boundary size},
    ylabel={Frequency},
    ymajorgrids=true,
    grid style={dashed,gray!40},
    width=9cm,
    height=6.3cm,
    ymin=0,
    xtick distance=100,
    ytick distance=5,
]

\addplot[
    ybar,
    hist={
        bins=10
    },
    fill=blue!60,
    draw=black
]         table [row sep=\\,y index=0] {
            data\\
1183\\ 1129\\ 1291\\ 1027\\ 937\\ 1093\\ 1093\\ 1261\\ 1045\\ 1039\\ 1219\\ 1327\\ 1315\\ 1069\\ 1183\\
1309\\ 1255\\ 1273\\ 1117\\ 1141\\ 1225\\ 1099\\ 1441\\ 1225\\ 1135\\ 1387\\ 1177\\ 1099\\ 1069\\ 1117\\
1141\\ 1159\\ 1123\\ 1171\\ 997\\ 1027\\ 1111\\ 1189\\ 1081\\ 1231\\ 1105\\ 1351\\ 1141\\ 1105\\ 1057\\
1069\\ 1195\\ 1105\\ 1321\\ 1123\\ 1051\\ 1189\\ 1153\\ 1195\\ 1183\\ 1117\\ 1105\\ 1111\\ 1027\\ 1177\\
1237\\ 1063\\ 1165\\ 1135\\ 1309\\ 1123\\ 1147\\ 1225\\ 1261\\ 1141\\ 1141\\ 1129\\ 1153\\ 1489\\ 1201\\
1213\\ 1111\\ 1261\\ 1165\\ 1255\\ 1165\\ 1123\\ 1057\\ 1261\\ 1345\\ 1027\\ 1135\\ 1225\\ 1135\\ 1117\\
1153\\ 1015\\ 1063\\ 1069\\ 1087\\ 1159\\ 1153\\ 1183\\ 1021\\ 1171\\
    };
\end{axis}
\end{tikzpicture}
\end{minipage}
\begin{minipage}[t]{7cm}
\vspace*{0mm}
%7Basilica 5000 boundary size
\begin{tikzpicture}
\begin{axis}[
    ytick={1},
    yticklabels={Boundary size\phantom(},
    yticklabel style = {align=center, font=\small, rotate=90},
    height=3cm,
    width=7cm,
    grid=major,
    grid style={dashed, gray!30}
    ]
    \addplot+ [boxplot, fill=blue!60]
        table [row sep=\\,y index=0] {
            data\\
1183\\ 1129\\ 1291\\ 1027\\ 937\\ 1093\\ 1093\\ 1261\\ 1045\\ 1039\\ 1219\\ 1327\\ 1315\\ 1069\\ 1183\\
1309\\ 1255\\ 1273\\ 1117\\ 1141\\ 1225\\ 1099\\ 1441\\ 1225\\ 1135\\ 1387\\ 1177\\ 1099\\ 1069\\ 1117\\
1141\\ 1159\\ 1123\\ 1171\\ 997\\ 1027\\ 1111\\ 1189\\ 1081\\ 1231\\ 1105\\ 1351\\ 1141\\ 1105\\ 1057\\
1069\\ 1195\\ 1105\\ 1321\\ 1123\\ 1051\\ 1189\\ 1153\\ 1195\\ 1183\\ 1117\\ 1105\\ 1111\\ 1027\\ 1177\\
1237\\ 1063\\ 1165\\ 1135\\ 1309\\ 1123\\ 1147\\ 1225\\ 1261\\ 1141\\ 1141\\ 1129\\ 1153\\ 1489\\ 1201\\
1213\\ 1111\\ 1261\\ 1165\\ 1255\\ 1165\\ 1123\\ 1057\\ 1261\\ 1345\\ 1027\\ 1135\\ 1225\\ 1135\\ 1117\\
1153\\ 1015\\ 1063\\ 1069\\ 1087\\ 1159\\ 1153\\ 1183\\ 1021\\ 1171\\
    };
\end{axis}
\end{tikzpicture}\\[0.4cm]
%7-Basilica 5000: time
\begin{tikzpicture}
\begin{axis}[
    ytick={1},
    yticklabels={Time (ms)},
    yticklabel style = {align=center, font=\small, rotate=90},
    height=3cm,
    width=7cm,
    grid=major,
    grid style={dashed, gray!30}
    ]
    \addplot+ [boxplot, fill=blue!60]
        table [row sep=\\,y index=0] {
            data\\
18671\\ 19438\\ 42703\\ 37984\\ 36094\\ 39125\\ 28875\\ 44969\\ 36297\\ 37781\\ 44125\\ 41375\\ 47438\\
41281\\ 45703\\ 46594\\ 41218\\ 44922\\ 43360\\ 40547\\ 45968\\ 40782\\ 47781\\ 46265\\ 40032\\ 46593\\
44641\\ 42719\\ 36781\\ 20781\\ 20953\\ 36844\\ 45016\\ 44109\\ 36563\\ 37937\\ 40578\\ 31469\\ 16547\\
22453\\ 21234\\ 22250\\ 18657\\ 19078\\ 17375\\ 18719\\ 20203\\ 18875\\ 22172\\ 18859\\ 19797\\ 19062\\
19797\\ 22422\\ 20719\\ 18875\\ 19265\\ 20579\\ 19343\\ 20938\\ 20562\\ 19266\\ 19219\\ 20843\\ 20813\\
19109\\ 19703\\ 21485\\ 19594\\ 20000\\ 19875\\ 19062\\ 18094\\ 23781\\ 20250\\ 19797\\ 19594\\ 20781\\
20312\\ 21610\\ 19344\\ 19656\\ 20844\\ 26953\\ 29156\\ 23766\\ 24515\\ 21828\\ 20828\\ 20391\\ 20922\\
19000\\ 18547\\ 19625\\ 20031\\ 21438\\ 21421\\ 22469\\ 19375\\ 21641\\
    };
\end{axis}
\end{tikzpicture}
\end{minipage}
\caption{The histograms of the sizes of boundaries of contracting portraits of 100 elements in 7-Basilica group $\B_7$ represented by words of length 500 (top), 1000 (middle), and 5000 (bottom), and corresponding boxplots for the boundary sizes and times required to compute them (in milliseconds).\label{fig:7basilica_hist}}
\end{figure}

% Revert the arrows back for the diagrams of automata
\tikzset{
        ->,  
        >=stealth',
        node distance=3cm, 
        every state/.style={thick, fill=gray!10}, 
        initial text=$ $,
        }
        
\begin{proposition}
\label{prop:complexity_product}
    Let $G$ be a contracting self-similar group with nucleus $\N$, and let $\pi(g)$ and $\pi(h)$ be nucleus portraits of elements $g,h\in G$ with boundary sizes $l_g=s(g)$ and $l_h=s(h)$, respectively. The time complexity of Algorithm~\ref{alg:portrait_product} for computing the nucleus portrait $\pi(gh)$ given $\pi(g)$ and $\pi(h)$ is $O\big(d\log|\mathcal N|\cdot(\frac{d}{d-1}(l_g+l_h )-\frac{2}{d-1})\big)$ and the space complexity is $O\big((|\mathcal N|+d)(\frac{d}{d-1}(l_g+l_h )-\frac{2}{d-1})\big)$. For fixed $|N|$ and $d$ (when only the portraits are considered as the input for the algorithm) both these complexities reduce to $O(l_g+l_h)$.
\end{proposition}

\begin{proof}
    Let $i_g$ and $i_h$ denote the numbers of non-terminal nodes of $\pi(g)$ and $\pi(h)$, respectively.
    The labels of the leaves of a nucleus portrait belong to the nucleus $\N$ and an arbitrary extension of the nucleus portrait results in a tree with all leaves in the same finite set $\N$. 
    
    Given Algorithm~\ref{alg:portrait_product}, we note that the complexity of computing the product of portraits is proportional to the number of iterations of the function \textsc{Product}. Each iteration of this function results in one of the following three cases:
    
    \begin{description}
        \item[Case I] If both arguments are in the nucleus, it returns a precomputed value from the array $NucleusProducts$.
        \item[Case II] If exactly one of two arguments is in the nucleus, it uses a precomputed value from the array $\N_1$ to expand the other argument, calls $d$ instances of \textsc{Product}, and returns the outcomes.        
        \item[Case III] If both arguments are not in the nucleus, it calls another $d$ instances of \textsc{Product} and returns the outcomes.

    \end{description} 

    The time and space complexity of Algorithm~\ref{alg:portrait_product} consists of three stages. The precomputation stage is done only once to compute the portraits of the pairwise products of the elements of the nucleus. Each pairwise product may have at most $|\mathcal N|^2$ states. To calculate the decomposition of each state one needs to compute the product of two permutations from $\Sym(d)$ and $d$ sections. Since we need to compute $|\mathcal N|^2$ portraits, the total time and space complexity of this stage is $O(d|\mathcal N|^4)$.
    
    The first stage of the main algorithm implemented in function \textsc{PortraitProduct}, due to its recursive nature, is linear (both in space and time) in the number of calls of the function \textsc{Product}.  Each of the iterations of this function takes time bounded by $O(d\log|\mathcal N|)$, where the factor $d$ comes from calculating the product $\sigma_g\sigma_h$ in line 21, and $\log|\mathcal N|$ comes from the searches within $\mathcal N_0$ in the \textbf{if} statement. To compute the number of calls of the function \textsc{Product}, we observe that the input portraits are extended to carry out the computation. These extensions occur when we are in \textbf{Case~II}. The maximum number of nodes added to the input portrait $\pi(g)$ by such extensions is bounded above by total number of nodes of the other input portrait $\pi(h)$. Therefore, by symmetry, the total number of nodes for each of the extended portraits required for the computation is bounded above by the sum of the numbers of nodes of the input portraits, i.e. $(i_g+l_g)+(i_h+l_h)$. Let $U_g$ and $U_h$ denote the sets of all nodes of the extended portrait of $\pi(g)$ and $\pi(h)$, respectively, required for computing their product. Then $|U_g| = |U_h|\leq (i_g+l_g)+(i_h+l_h)$. Let $E$ denote an enumerating set of calls of the function \textsc{Product} for computing the portrait product $\pi(g)*\pi(h)$. Then there is bijective map $U_j \rightarrow E$ for $j\in\{g,h\}$ because each instance of the recursive function \textsc{Product} uniquely identifies to exactly one node in each of the extended portraits. Therefore, the number of calls of the function is bounded above by $(i_g+l_g)+(i_h+l_h)$. From Lemma~\ref{lem:non-terminal-nodes}, this sum can be rewritten as follows:
    
    % Let $U$ denote the set of nodes of the input portraits and $V$ denote the set of nodes that are added to the portraits to compute the product. Then each call of the nested function \textsc{Product} corresponds to exactly two unique elements of the set $U\cup V$ and one of the two elements must belong to the set $U$. Thus, if a an element of the set  corresponds to  one of which must belong to  
    
    % To get the bound on the number of call of the nested function we start by labeling each vertex of $\pi(g)$ and $\pi(h)$ by a label $U$ (for unchecked). Each call of the function corresponds uniquely to computing a portraits originating at the 
    % Each call uniquely corresponds to a pair of vertices at the same level. 
    
    % Each call of the function \textsc{Product} can be identified with a unique vertex of one of the input portraits. The aforementioned cases imply that the maximum recalls of the nested function \textsc{Product} and precomputed data are each bounded above by the sum of vertices of the input portraits given in the following expression: 

\begin{equation}
\label{eqn:nodes}
(i_g+l_g)+(i_h+l_h) = \left( \frac{l_g-1}{d-1}+ l_g \right) + \left( \frac{l_h-1}{d-1}+l_h \right) = \frac{d}{d-1}(l_g+l_h )-\frac{2}{d-1} \ .
\end{equation}

Therefore, the time complexity of the first stage is $O\big(d\log|\mathcal N|\cdot(\frac{d}{d-1}(l_g+l_h )-\frac{2}{d-1})\big)$, while the space complexity is bounded above by $O\big((|\mathcal N|+d)(\frac{d}{d-1}(l_g+l_h )-\frac{2}{d-1})\big)$ since every node is labeled either by an element of $\mathcal N$ or by a permutation from $\Sym(d)$.

The second stage of the main algorithm (pruning: Steps~(f) and~(g)) is implemented in Algorithm~\ref{alg:portrait_product} as a recursive function~\textsc{Prune}. The number of its calls is equal to the number of nodes in the obtained portrait, which is again equal to $|U_g|=|U_h|$ and bounded by~\eqref{eqn:nodes}. Each iteration of this function requires $O(\log|\mathcal N|)$ operations in the \textbf{if} statement on lines 29 and 33. The whole pruning procedure does not require any additional space beyond the space occupied by the input portrait computed in the first stage of the algorithm. 

We conclude that the time complexity of Algorithm~\ref{alg:portrait_product} for computing the product of two nucleus portraits is $O\big(d\log|\mathcal N|\cdot(\frac{d}{d-1}(l_g+l_h )-\frac{2}{d-1})\big)$ and the space complexity is $O\big((|\mathcal N|+d)(\frac{d}{d-1}(l_g+l_h )-\frac{2}{d-1})\big)$.
\end{proof}

We note that the size of the boundary sizes is a natural characterization of the size of the portrait as it is proportional to the amount of memory a portrait takes. However, one can also get a (generally less efficient) estimate of the complexity of the algorithm in terms of the depth of the portraits using an obvious inequality $s(g)\leq d^{\partial(g)}$.

\begin{corollary}
\label{cor:prod_depth}
Let $G$ be a contracting self-similar group with nucleus $\N$, and let $\pi(g)$ and $\pi(h)$ be nucleus portraits of elements $g,h\in G$ of depth $\partial(g)$ and $\partial(h)$, respectively. The time and space complexity of Algorithm~\ref{alg:portrait_product} for computing the nucleus portrait $\pi(gh)$ given $\pi(g)$ and $\pi(h)$ is bounded above by $O(d^{\partial(g)} +d^{\partial(h)})$.
\end{corollary}

     %%%%%%%%%%%%%%%%%%

    % Similarly, the maximum number of recalls from the precomputed data is bounded by the above sum. 
    
    % Among the aforementioned cases, \textbf{Case III} has the highest complexity as it involves both recalling a precomputed entry and $d$ recursions of the nested function. 

    % The exact number of iterations of the nested function and recalls from precomputed data required to compute the product of two portraits will depend on the permutation structures and the boundary sizes of the nucleus portraits.

    % Thus, the number of times the nested function is called for computing a product is bounded by sum of the nodes of the input portraits.

    % From Lemma~\ref{lem:non-terminal-nodes}, we see that this sum equals
    % \[ i_g+l_g+i_h+l_g = \frac{l_g-1}{d-1}+ l_g + \frac{l_h-1}{d-1}+l_h = \frac{d}{d-1}(l_g+l_h)+\frac{2}{d-1}\]
    
    % Then recursions of the nested function and therefore the computation for portrait products is linear in $l_g+l_h$, i.e.,
    
    % $T(l_g+l_h) = \mathcal{O}(l_g+l_h)$.

     %%%%%%%%%%%%%%%%%%

Now we obtain similar estimates for Algorithm~\ref{alg:portrait_inverse}. 

\begin{proposition}
\label{prop:complexit_inverse}
   Let $G$ be a contracting self-similar group and $\pi(g)$ be the nucleus portrait of $g\in G$ with boundary size $l_g$. The time and space complexities of Algorithm~\ref{alg:portrait_inverse} for computing the nucleus portrait $\pi(g^{-1})$ of $g^{-1}$  is $O(dl_g)$. For fixed $d$ (when only the portraits are considered as the input for the algorithm) both these complexities reduce to $O(l_g)$.
\end{proposition}

\begin{proof}
    We recall that the labels of the boundary nodes of a nucleus portrait belong the nucleus $\N$, which is closed under taking inverses. The complexity of Algorithm~\ref{alg:portrait_inverse} is linear in the number of calls of the function \textsc{Inverse}. In order to compute the portrait inverse, the function is called at each node of the portrait. Recall that $i_g$ denote the number of non-terminal nodes of $\pi(g)$. From Lemma~\ref{lem:non-terminal-nodes}, we see that the total number of these calls equals $i_g+l_g = \frac{d}{d-1}\cdot l_g-\frac{1}{d-1}$. Each iteration of function \textsc{Inverse} requires up to $d$ operations to compute $\sigma_g^{-1}$ in line 7.  Therefore, the time and space complexities of the computation is $O(dl_g)$.
\end{proof}

Similarly to Corollary~\ref{cor:prod_depth}, we can rewrite the complexity estimates in terms of the depth of the portrait.

\begin{corollary}
Let $G$ be a contracting self-similar group and $\pi(g)$ be the nucleus portrait of an element $g\in G$ of depth $\partial(g)$. The time and space complexity of Algorithm~\ref{alg:portrait_inverse} for computing the nucleus portrait $\pi(g^{-1})$ given $\pi(g)$ is bounded above by $O(d^{\partial(g)})$.
\end{corollary}

Finally, we can rewrite the complexities in terms of the lengths of the elements $g$ and $h$ as follows.

\begin{proposition}
Suppose $G$ is a contracting group generated by a finite self-similar set $S$ containing the nucleus of $G$ and let $\pi(g)$ and $\pi(h)$ be nucleus portraits of elements $g,h\in G$ with $|g|_S\leq n$ and $|h|_S\leq n$. The time and space complexities of Algorithms~\ref{alg:portrait_product} and~\ref{alg:portrait_inverse} for computing the nucleus portraits $\pi(gh)$ and $\pi(g^{-1})$, given words of length at most $n$ representing $g$ and $h$, are polynomial in $n$.
\end{proposition}

\begin{proof}
   Follows directly from Propositions~\ref{prop:complexity_product} and~\ref{prop:complexit_inverse},  and Corollary~\ref{cor:log_depth}.
\end{proof}

%%%%%%%%% Begin product and inverse algorithms %%%%%%%
\algblock{BeginPrecomputation}{EndPrecomputation}
\begin{algorithm} 
\caption{Computing the portrait of the product of group elements}\label{alg:portrait_product} 

\begin{algorithmic}[1]

\BeginPrecomputation

\Comment{Precompute arrays of portraits of elements of the group nucleus $\N$}

\For{$n \in \N$}
    \State\do
    \State$\N_0[n]\gets[n]$
    \State$\N_1[n] \gets$ [\Call{Perm}{$n$}, \Call{Sections}{$n$}[1],$\ldots$, \Call{Sections}{$n$}[$d$]]
\EndFor

% \State $\N_1\gets $ (level-one decompositions of elements of the group nucleus $\N$) 

\Comment{Precompute the array of portraits of pairwise products of nucleus elements}

\For{$n_1\in \N$}
    \do \\
    \For{$n_2 \in \N$}
        \State\do 
            \State $NucleusProducts$[$n_1,n_2$] $\gets$ \Call{AutomPortrait}{$n_1n_2$}
        \EndFor
\EndFor

% \State $\N_0 \times \N_0 \gets$ (portraits identified with ordered pairwise products of elements in $\N_0$)

%     \Comment given $P_{n_1},P_{n_2} \in \N_0$, we denote the portrait of product by $P_{n_1*n_2}$ computed via \verb|AutomPortrait|($P_{n_1}[1]*P_{n_2}[1]$) %function in the package \verb|AutomGrp|

\State $d\gets$ (degree of the tree)

\EndPrecomputation
\State
\Function{PortraitProduct}{$Portrait_g,Portrait_h$}

\Function{Product}{$P_g,P_h$}
    \If{$P_g \in \N_0$ \textbf{and} $P_h \in \N_0$ }
        \State\Return $NucleusProducts$[$P_{g}[1],P_{h}[1]$]        
    \ElsIf{$P_g \notin \N_0$ \textbf{and} $P_h \notin \N_0$}
    \State $\sigma_g \gets P_g[1]$
    \State $\sigma_h \gets P_h[1]$
    
    \Comment{$\sigma_g$ and $\sigma_h$ are permutations induced by $g$ and $h$ at the first level}

    \State\Return[$\sigma_g\sigma_h$, \Call{Product}{$P_g[2],P_h[\sigma_g(1)+1]$},$\ldots$,\Call{Product}{$P_g[d+1],P_h[\sigma_g(d)+1]$}]
    
    \ElsIf{$P_g \in \N_0$ \textbf{and} $P_h \notin \N_0$}
    \State\Return\Call{Product}{$\N_1[P_g[1]],P_h$}
    \Else
    \State\Return\Call{Product}{$P_g,\N_1[P_h[1]]$}
    
    % {$P_g \notin \N_0$ and $P_h \in \N_0$}
    % \State $g\gets P_g[1]$
    % \State $P_g \gets$  [\Call{Perm}{$g$},[\Call{Sections}{$g$}[1]],$\ldots$,[\Call{Sections}{$g$}[$d$]]]
    %\Comment{precomputed in $\N_1$}
    
    \EndIf
    
%    \State\Return\Call{Product}{$P_g,P_h$}
    %\Comment{both portraits have depth $\geq 1$ }
\EndFunction
\Function{Prune}{$P_g$}
    \If{$P_g \in \N_0$}
        \State\Return $P_{g}$
    \Else
        \State $P\gets[P_g[1],\Call{Prune}{P_g[2]},\Call{Prune}{P_g[2]},\ldots,\Call{Prune}{P_g[d+1]}]$
        \If{$P \in \N_1$}
        \Comment{Pruning}
            \State\Return $[n]$ such that $\N_1[n]=P$
        \Else
            \State\Return $P$
        \EndIf
    \EndIf
\EndFunction
\State $P\gets$\Call{Product}{$Portrait_g,Portrait_h$}
\State \Return\Call{Prune}{$P$}
\EndFunction

\end{algorithmic} 
\end{algorithm}

%%%%%%%%%%%%%%%%%%%%%%%%%%%%%

\begin{algorithm} 
\caption{Computing the portrait of the inverse of a group element}\label{alg:portrait_inverse} 
\begin{algorithmic}[1]
\Function{PortraitInv}{$P_g$}
    \State $d\gets$ (degree of the tree)
    \If{$Length(P_g)=1$}
        \State\Return $[(P_g[1])^{-1}]$        
    \Else
        \State $\sigma_g \gets P_g[1]$
        \State\Return[$\sigma_g^{-1}$,\Call{PortraitInv}{$P_g[\sigma_g^{-1}(1)+1]$},\ldots,\Call{PortraitInv}{$P_g[\sigma_g^{-1}(d)+1]$}]
    \EndIf
\EndFunction
\end{algorithmic} 
\end{algorithm}

%%%%%%%%% End product and inverse algorithms %%%%%%%

\begin{figure}
    \centering
    \begin{forest}
    [  $(123)$ [ $(12)$ [$n_1$][$n_2$][$n_3$] ] [$n_4$] [$n_5$]]
    \end{forest}\qquad\qquad
    \begin{forest}
    [  $(132)$ [$n_5^{-1}$][ $(12)$ [$n_2^{-1}$][$n_1^{-1}$][$n_3^{-1}$] ] [$n_4^{-1}$]]
    \end{forest}
    \caption{An example of nucleus portraits of inverse group elements}
    \label{fig:portrait_inverse}
\end{figure}
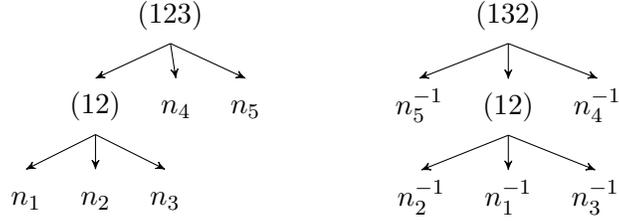

\begin{comment}
Other elementary computations such as conjugation for nucleus portraits can be performed following a sequence of the aforementioned procedures for multiplication and taking inverse.   
\end{comment}

\subsection{An explicit example of AAG key generation using nucleus portraits}
\label{key-gen-portraits}

The key generation process for the AAG protocol described in Subsection~\ref{AAG} requires computations using nucleus portraits. We illustrate here the techniques from the previous subsection using the example of Grigorchuk group $\G$ to demonstrate the steps of AAG protocol. In this example, the public keys of Alice and Bob comprise of 3 elements of $\G$ represented by reduced words of length 10 each.

Following AAG key-generation mechanism using contracting automaton group $G$:

\begin{enumerate}
    \item Given the set of states $Q = \{q_1,\ldots, q_N\}$ of an automaton $\A$ generating a contracting group $G$, Alice chooses a public set $\overline{a}=(a_1,\ldots,a_{N_1})$ where $a_i$ are reduced words over $Q^{\pm1}$. Then she develops nucleus portraits for each element of the set and publishes them as her public key.
    
    For example, let Alice choose the following tuple from Grigorchuk group:
    \[ \overline{a}= \left(a_1 = (daba)^2ba , a_2 = b(ad)^2abada, a_3 = (ad)^3abac\right).\] 
    The nucleus portraits for this set as shown in Table~\ref{table:alice public} will be published as her public key.
    
    \item Bob chooses a public set $\overline{b}=(b_1,\ldots,b_{N_2})$ where $b_i$ are reduced words over $Q^{\pm1}$. Then he develops the nucleus portraits for each element of the set and publishes them as his public key.

    For example, let Bob choose the following tuple from Grigorchuk group:
    \[\overline{b}= \left( b_1 = d(ab)^2acada , b_2 = cab(ad)^3a , b_3 = bac(ab)^3a\right).\] 
    Then he develops the nucleus portrait for each word and publishes them as his public key. This is shown in Table~\ref{table:bob public}.
    
    %\item Alice and Bob find the nucleus portraits for each element of their sets and publish them as their respective public sets.
    
    \item Alice then chooses  $A=a_{s_1}^{\varepsilon_1} \cdots a_{s_L}^{\varepsilon_L}$ where $a_{s_i} \in \overline{a}$ and $\varepsilon_i \in \{\pm1\}$. The nucleus portrait of $A$ is her private key.

    For our example, Alice chooses her private key to be the nucleus portrait of $A = a_1a_2a_3$ as shown in Figure~\ref{fig:alice private}.

    \item Bob chooses $B=b_{t_1}^{\delta_1} \cdots b_{t_M}^{\delta_M}$ where $b_{t_i} \in \overline{b}$ and $\delta_i \in \{\pm1\}$. Then he develops the nucleus portrait of $B$ that serves as his private key.

    In our example, Bob chooses his private key to be the nucleus portrait of $ B = b_1b_2b_3$ as shown in Figure~\ref{fig:bob private}.

    \item Using the nucleus portraits, Alice computes $b'_i=A^{-1}b_iA$ for all $b_i \in \overline{b}$ and sends them to Bob (as nucleus portraits). Similarly, Bob computes $a'_i=B^{-1}a_iB$ for all $a_i \in \overline{a}$ as nucleus portraits and sends them to Alice. Now the shared secret key is the nucleus portrait of $K=A^{-1}B^{-1}AB$, which Alice computes as $A^{-1}a'_1a'_2a'_3$ and Bob computes as $B^{-1}b'_1b'_2b'_3$.  
    
    To conclude our example, the shared secret key is the nucleus portrait of $K=A^{-1}B^{-1}AB$ as shown in Figure~\ref{fig:shared-key}.
\end{enumerate}

We emphasize that the public keys of Alice and Bob comprise only of nucleus portraits and all computations and communication between the parties takes place in this form. The reduced words used to develop those portraits are never in public domain at any stage of the key generation process. 

%%% Alice Public key %%%%%%%%%%%%%%%%%%%%%

\hspace{1cm}
\begin{table}[h]
\centering
\begin{tabular}{|m{3cm}||c|c|c|}
    \hline
    Group element & $a_1 = (daba)^2ba$ & $a_2 = b(ad)^2abada$ & $a_3 = (ad)^3abac$ \\
    
    \hline
\raisebox{-60pt}{\parbox{2.5cm}{Nucleus portrait}} & \begin{forest}
[  $(\sigma)$ [ $a$ ] [ $()$[ () [ $d$ ] [ $a$ ] ] [ $(\sigma)$ [ $(\sigma)$ [ $c$ ] [ $a$ ] ] [ $d$ ] ] ] 
]
\end{forest} & 
\begin{forest}
   [ $(\sigma)$ [$(\sigma)$ [$b$][1]]
               [$(\sigma)$[$c$][$(\sigma)$[a][c]]]
                   ]   
\end{forest} &
\begin{forest}
[ $()$  [$(\sigma)$ [ $b$ ] [ 1 ] ] [ $(\sigma)$ [ 1 ] [ $b$ ] ] ]
\end{forest} \\
   
    \hline
\end{tabular}
\caption{ Nucleus portraits for $a_1, a_2$ and $a_3$ that constitute the public key of Alice.} 
\label{table:alice public}
\end{table}

%%%%%%%%%%%%%%%%%%%%%%%%%%%%%%%%%

%%% Bob public key %%%%%%%%%%%%%%

\hspace{1cm}
\begin{table}[h]
\centering
\begin{tabular}{|p{2.5 cm}||c|c|c|}
    \hline
    Group element & $b_1 = d(ab)^2acada$ & $b_2 = cab(ad)^3a$ & $b_3 = bac(ab)^3a$ \\
    \hline
    \raisebox{-60pt}{\parbox{2.5cm}{Nucleus portrait}} & \begin{forest}
[  $ (\sigma)$ [ $(\sigma)$ [ $(\sigma)$ [ $c$ ] [ $a$ ] ] [ $d$ ] ] [ () [ () [ $b$ ] [ 1 ] ] [ $(\sigma)$ [ $(\sigma)$ [ $b$ ] [ 1 ] ] [ $(\sigma)$ [ 1 ] [ $b$ ] ] ] ] ]
\end{forest} & 
\begin{forest}
   [  $(\sigma)$ [ $(\sigma)$ [ $b$ ] [ 1 ] ] [ $(\sigma)$ [ 1 ] [ $b$ ] ] ]  
\end{forest}   &
\begin{forest}
[  $ (\sigma)$ [ $(\sigma)$ [ $(\sigma)$ [ $a$ ] [ $c$ ] ] [ $d$ ] ] [ () [ () [ $b$ ] [ 1 ] ] [ $(\sigma)$ [ $b$ ] [ $b$ ] ] ] ]
\end{forest} \\
   
    \hline
\end{tabular}
\caption{ Nucleus portraits for $b_1, b_2$ and $b_3$ that constitute the public key of Bob.} 
\label{table:bob public}
\end{table}

%%%%%%%%%%%%%%%%%%%%%%%%%%%%%%%

%%% Private key of Alice %%%%%%

\begin{figure}
    \centering
    \begin{forest}
    [  $(\sigma)$ [ $(\sigma)$ [$(\sigma)$ [$(\sigma)$ [$d$][$a$]] [$(\sigma)$[$a$][$d$]] ][$c$] ] [ $()$ [ $()$ [$d$][$a$] ] [$(\sigma)$ [$(\sigma)$ [$c$][$a$] ][$d$] ] ]]
    \end{forest}
    \caption{Nucleus portrait of Alice's private key $A = a_1a_2a_3$.}
    \label{fig:alice private}
\end{figure}
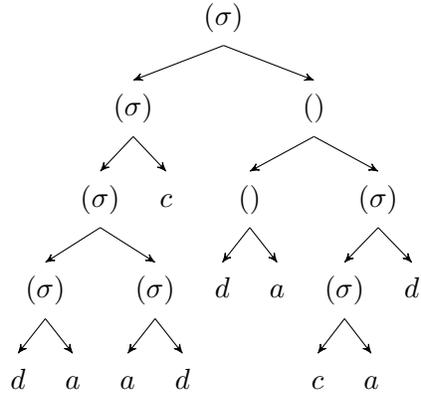

%%%%%%%%%%%%%%%%%%%%%%%%%%%%%%%

%%% Private key of Bob %%%%%%%%

\begin{figure}
    \centering

    \begin{forest}
    [$(\sigma$) [$(\sigma$) [$()$ [$c$][$a$]][1] ] [$(\sigma$) [$(\sigma$) [$(\sigma$)[$(\sigma$)[$d$][$a$] ][$(\sigma$)[$a$][$d$] ] ][$d$] ] [$(\sigma$) [$(\sigma$)[$b$][1] ][$(\sigma$) [$c$][$(\sigma$) [$a$][$c$] ] ] ] ] ]
    \end{forest}
    \caption{Nucleus portrait of Bob's private key $ B = b_1b_2b_3$.}
    \label{fig:bob private}
\end{figure}
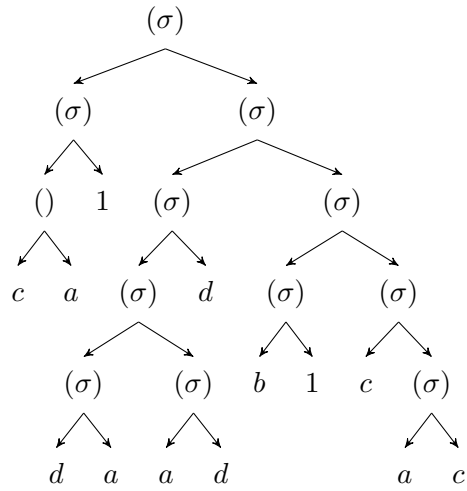

%%%%%%%%%%%%%% Shared key K %%%%%%%%%%%

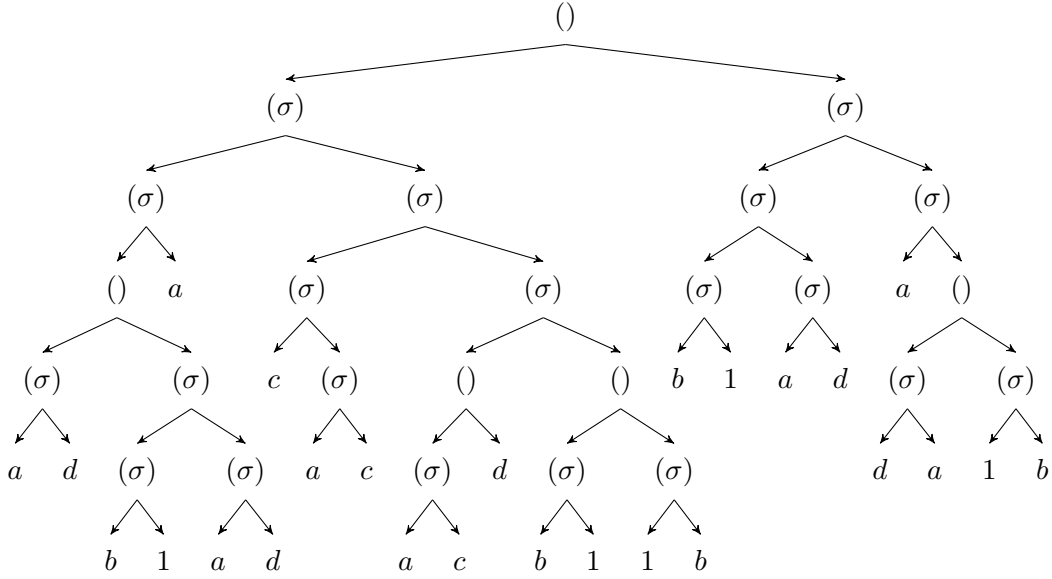
\begin{figure}
    \centering
    \begin{forest}
    [()[$(\sigma)$ [$(\sigma)$ [$()$ [$(\sigma)$[$a$][$d$]][$(\sigma)$ [$(\sigma)$[$b$][1]][$(\sigma)$[$a$][$d$]]]][$a$]][$(\sigma)$[$(\sigma)$[$c$][$(\sigma)$[$a$][$c$]]][$(\sigma)$ [() [$(\sigma)$[$a$][$c$]][$d$]][() [$(\sigma)$[$b$][1]][$(\sigma)$[$1$][$b$]]]]]]
     [$(\sigma)$[$(\sigma)$[$(\sigma)$[$b$][1]][$(\sigma)$[$a$][$d$]]][$(\sigma)$[$a$][()[$(\sigma)$[$d$][$a$]][$(\sigma)$[1][$b$]]]]]]   
    \end{forest}
    \caption{Nucleus portrait of the shared key $K = A^{-1}B^{-1}AB$.}
    \label{fig:shared-key}
\end{figure}

%%%%%%%%%%%%%%%%%%%%%%% Recovering word from nucleus portrait %%%%%%%%%%%%%%%%%%%%

\section{Recovering Words from Nucleus Portraits in Regular Branch Groups}
\label{sec:word_from_portrait}
In this section we provide an algorithm that recovers a word representing an element $g$ of a regular branch group $G$ from its portrait of a fixed depth or, if a group is contracting, from its nucleus portrait. Moreover, we show that if $G$ is contracting, then, given the nucleus portrait of an element $g$ of length $n$ in $G$, this algorithm constructs a word of polynomial in $n$ length. The algorithm is a generalization of a similar algorithm for Grigorchuk group described in~\cite{grigorch:solved,lysenok_mu:conjugacy_problem_in_grigorchuk_group_polynomial10}. The authors are grateful to Rostislav Grigorchuk for pointing out the connection to the original algorithm that served as a basis for the result below.

We begin from recalling the definition of regular branch groups. Consider a self-similar group $G$ and its normal subgroup $\St_G(1)$ consisting
of all elements in $G$ that stabilize the first level of $X^*$.
There is a natural embedding
\[
\Psi\colon \St_G(1)\hookrightarrow G\times G\times\dots\times G
\]
given by
\[
g\stackrel{\Psi}{\mapsto} (g|_0,g|_1,\dots,g|_{d-1}).
\]

\begin{definition}
Let $K,K_0,\dots,K_{d-1}$ be subgroups of a self-similar group $G$
acting on $X^*$. We say that $K$ \emph{geometrically contains} $K_0
\times \dots \times K_{d-1}$ and write
\[
K_0 \times \dots \times K_{d-1} \preceq K
\]
if $K_0 \times \dots \times K_{d-1} \leq \Psi(\St_G(1) \cap K)$.
\end{definition}

\begin{definition}
A group $G$ of tree automorphisms of $X^*$ that acts transitively on
the levels of the tree $X^*$ is called a \emph{regular weakly branch
group} over its nontrivial subgroup $K$ if
\[
K \times \dots \times K \preceq K.
\]
If, in addition, $[G:K]<\infty$ and $[\Psi(\St_G(1) \cap K):K \times \dots \times K]<\infty$  (i.e., $K$ geometrically contains $K\times\cdots\times K$ as a subgroup of finite index), then $G$ is called a \emph{regular branch group} over $K$. The group $K$ is then called a \emph{branching subgroup} in $G$.
\end{definition}

\begin{lemma}
\label{lem:uniform_liftingK}
Let $G$ be a regular branch group generated by a finite set $S$ acting on a $d$-ary rooted tree $X^*$ with a branching subgroup $K$. Then, given a tuple $(g_1,g_2,\ldots,g_d)\in K^d$, where $g_i$'s are given as words in $S^{\pm1}$, there is a word $w$ in $S^{\pm1}$ of length at most $C\sum_{i=1}^d |g_i|_S$ that represents $(g_1,g_2,\ldots,g_d)\in K<G$ for some $C>0$ not depending on $g_i$'s.
\end{lemma}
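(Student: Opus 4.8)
\textbf{Proof plan.} The plan is to reduce the statement to the case where all but one of the $g_i$ are trivial, and then to exploit the geometric containment $K\times\cdots\times K\preceq K$ to transport a word for that single coordinate into a word for the full lift at the cost of only a bounded multiplicative factor. The point is that ``putting an element of $K$ into the $i$-th coordinate'' is a group homomorphism from $K$ into $K$, so it suffices to control the images of a fixed finite generating set of $K$.

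First I would fix, once and for all, some finite generating set $S_K$ of $K$. Since $[G:K]<\infty$ and $G=\langle S\rangle$ is finitely generated, $K$ is finitely generated, and the Reidemeister--Schreier procedure produces such an $S_K$ together with a rewriting process: writing $k\in K$ as a word $s_1\cdots s_n$ in $S^{\pm1}$ with $n=|k|_S$, one groups it as $\prod_{j=1}^n\bigl(r_{j-1}s_jr_j^{-1}\bigr)$, where $r_j$ is the chosen coset representative of $r_{j-1}s_j$ (with $r_0=1$) and $r_n=1$ precisely because $k\in K$; each factor is a Schreier generator or its inverse, so $|k|_{S_K}\le|k|_S$ with no additive term, and $|1|_{S_K}=0$. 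Next, for each coordinate $i\in\{1,\dots,d\}$ and each $s\in S_K$, the tuple $(1,\dots,1,s,1,\dots,1)$ with $s$ in position $i$ lies in $K\times\cdots\times K\le\Psi(\St_G(1)\cap K)$, hence has a unique $\Psi$-preimage $v_{i,s}\in\St_G(1)\cap K\le K$; I fix a word in $S^{\pm1}$ representing each $v_{i,s}$ and set $C=\max_{i,s}|v_{i,s}|_S$, a constant depending only on $G$, $S$, $K$, and these finitely many choices.

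Now, given $(g_1,\dots,g_d)\in K^d$ with each $g_i$ presented as a word of length $|g_i|_S$ in $S^{\pm1}$, the construction is: (i) rewrite each $g_i$ as a word $w_i=s_{i,1}^{\varepsilon_{i,1}}\cdots s_{i,n_i}^{\varepsilon_{i,n_i}}$ in $S_K^{\pm1}$ with $n_i\le|g_i|_S$; (ii) note that for each fixed $i$ the coordinate map $\phi_i\colon K\to\St_G(1)\cap K$, $k\mapsto\Psi^{-1}(1,\dots,1,k,1,\dots,1)$, is a well-defined injective homomorphism, being the composition of the $i$-th coordinate inclusion $K\hookrightarrow K^d\le\Psi(\St_G(1)\cap K)$ with the inverse of the injective homomorphism $\Psi$ restricted to $\St_G(1)\cap K$; hence $\phi_i(g_i)=\prod_j v_{i,s_{i,j}}^{\varepsilon_{i,j}}$ is represented by the concatenation of the fixed words for the $v_{i,s_{i,j}}$, a word of length at most $Cn_i\le C|g_i|_S$; and (iii) take $w$ to be the concatenation of these $d$ words, one per coordinate. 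Then $w$ represents $\prod_{i=1}^d\phi_i(g_i)\in K$, and applying $\Psi$ gives $\prod_{i=1}^d(1,\dots,g_i,\dots,1)=(g_1,\dots,g_d)$, so $w$ represents the asserted element of $K$, with $|w|_S\le C\sum_{i=1}^d|g_i|_S$.

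The only genuinely delicate point is the metric comparison in step (i): one must pass from generating set $S$ of $G$ to a generating set $S_K$ of $K$ at a purely multiplicative cost, with \emph{no} additive constant, since otherwise the clean bound $C\sum_i|g_i|_S$ fails when some $g_i=1$. The Schreier rewriting supplies exactly this (length $\le|g_i|_S$, and $0$ for the identity), so no loss is incurred. Everything else is bookkeeping: that $\phi_i$ is a homomorphism follows immediately from injectivity of $\Psi$ on $\St_G(1)$ together with $K\times\cdots\times K\preceq K$, and the elements $v_{i,s}$ form a finite list fixed in advance. (Note that finiteness of the index $[\Psi(\St_G(1)\cap K):K\times\cdots\times K]$ is not needed here; only $[G:K]<\infty$, used to guarantee $K$ is finitely generated, is essential.)
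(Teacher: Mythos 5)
Your proposal is correct and follows essentially the same route as the paper's proof: both use the Reidemeister--Schreier rewriting to pass from $S$ to a generating set $S_K$ of $K$ at no additive cost, and both lift the single-coordinate generators $(1,\dots,1,s,1,\dots,1)$ through the containment $K\times\cdots\times K\preceq K$ to a fixed finite list of words, yielding the multiplicative bound. The only cosmetic difference is that you fix words in $S^{\pm1}$ for the lifts directly, whereas the paper first lifts to words in $S_K^{\pm1}$ of length at most $L$ and then rewrites into $S^{\pm1}$ with a factor $M$, obtaining $C=ML$.
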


\begin{proof}
We first construct the Schreier graph $\Gamma=\Gamma(G,K,S)$ of $K$ in $G$ with respect to $S$ (which is finite since $[G:K]<\infty$) and use it to construct a finite generating set $S_K=\{a_1,a_2,\ldots,a_r\}$ of $K$ via the Reidemeister-Schreier process. For each $1\leq i\leq r$ and $1\leq j\leq d$ we define 
\[a_{i,j}=(1,\ldots,1,a_i,1,\ldots,1)\in K\times\cdots\times K,\]
where the nontrivial component $a_i$ is located in the $j$-th coordinate. By construction
\[K\times\cdots\times K=\langle a_{i,j}\colon 1\leq i\leq r, 1\leq j\leq d\rangle.\]

Since $K \times \dots \times K \preceq K$, for each $a_{i,j}$ there is a  word $w_{i,j}$ in $S_K^{\pm 1}$ representing $a_{i,j}$. Let
\[L=\max_{i,j}|w_{i,j}|_{S_K}.\]

Let $M=\max_{i}|a_i|_S$ be the maximal length of generators $a_i$. Then for every $h\in H$
\begin{equation}
\label{eqn:bound_S}
|h|_{S}\leq M|h|_{S_K}.
\end{equation}
On the other hand, since $S_K$ is the generating set constructed by the Reidemeister-Schreier process, every word $w$ in $S^{\pm1}$ representing an element of $K$ corresponds to the loop in $\Gamma$ that starts and ends at the coset $K$. Following this loop and recording labels of edges that are not in the spanning tree of $\Gamma$ that was used to construct $S_K$, yields a word in $S_K$ representing the same element of $K$ as $w$ represents of length not larger than the length of $w$. Therefore, for each $h\in K$
\begin{equation}
|h|_{S_K}\leq |h|_{S}.
\end{equation}

Now, given $(g_1,g_2,\ldots,g_d)\in K^d$, where $g_i$'s are given as words in $S^{\pm1}$, we can rewrite each of $g_i$'s as a word in $S_K^{\pm1}$ of length at most $|g_i|_S$, then lift every letter $a_{i,j}$ in each of these words to a word $w_{i,j}$ of length at most $L$ in $S_K$ and obtain a word $w'$ in $S_K^{\pm1}$ of length at most 
\[|w'|_{S_K}=L\sum_{i=1}^d |g_i|_S\] 
representing $(g_1,g_2,\ldots,g_d)$. Finally, rewriting $w'$ as a word in $S^{\pm1}$ will create a word $w$ in $S^{\pm1}$ of length at most $ML\sum_{i=1}^d |g_i|_S$ by~\eqref{eqn:bound_S}. We finish the proof by setting $C=ML$.
\end{proof}

\begin{lemma}
\label{lem:uniform_lifting}
Let $G$ be a regular branch group generated by a finite set $S$ acting on a $d$-ary rooted tree $X^*$ with a branching subgroup $K$. Suppose $(u_1,\ldots, u_d), (v_1,\ldots, v_d)\in G^d$ be such that $Ku_i=Kv_i$ for all $1\leq i\leq d$. If there is $u\in\St_G(1)$ such that $\Psi(u)=(u_1,\ldots, u_d)$, then there is $v\in \St_G(1)$ such that $\Psi(v)=(v_1,\ldots, v_d)$.
\end{lemma}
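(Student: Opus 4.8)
The plan is to compare the two tuples coordinate by coordinate and ``correct'' the given element $u$ using a suitable element of the branching subgroup product $K\times\cdots\times K$, which we know is geometrically contained in $K$ and hence realized by an element of $\St_G(1)\cap K$. First I would record that since $Ku_i=Kv_i$ for each $i$, the element $w_i:=u_i^{-1}v_i$ lies in $K$, so the tuple $(w_1,\ldots,w_d)$ belongs to $K^d$. Next I would invoke the hypothesis $K\times\cdots\times K\preceq K$, which by definition gives $K\times\cdots\times K\leq \Psi(\St_G(1)\cap K)$; applying this to $(w_1,\ldots,w_d)$ produces an element $t\in\St_G(1)\cap K\leq\St_G(1)$ with $\Psi(t)=(w_1,\ldots,w_d)$.

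Now I would set $v:=ut$. Since both $u$ and $t$ lie in $\St_G(1)$, so does $v$, as $\St_G(1)$ is a subgroup. It remains to compute $\Psi(v)$. Because $\Psi$ is the injective homomorphism $\St_G(1)\hookrightarrow G\times\cdots\times G$ sending $g\mapsto(g|_0,\ldots,g|_{d-1})$, and sections are multiplicative on $\St_G(1)$ coordinatewise, we get $\Psi(v)=\Psi(u)\Psi(t)=(u_1w_1,\ldots,u_dw_d)=(u_1u_1^{-1}v_1,\ldots,u_du_d^{-1}v_d)=(v_1,\ldots,v_d)$, as desired. (One must be mildly careful about whether the action is on the left or the right, and about the indexing convention $X=\{0,1,\ldots,d-1\}$ versus $\{1,\ldots,d\}$ used in the statement, but these are purely bookkeeping and match the conventions fixed in Section~\ref{sec:automaton_groups}.)

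The only real content is the second step: that ``$\preceq$'' delivers an honest preimage of $(w_1,\ldots,w_d)$ inside $\St_G(1)$. This is exactly what the relation $K\times\cdots\times K\leq\Psi(\St_G(1)\cap K)$ asserts, so there is no obstacle beyond correctly unwinding the definition of geometric containment; the finite-index part of the regular branch hypothesis is not even needed here. Thus the main (and rather mild) point to get right is simply ensuring that the correcting element $t$ can be chosen in $\St_G(1)$ — which it can, since $\St_G(1)\cap K\subseteq\St_G(1)$ — and that multiplication by $t$ acts on the tuple of sections in the expected componentwise way.
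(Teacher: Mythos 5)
Your argument is essentially the paper's proof: both extract from the coset condition a tuple in $K\times\cdots\times K$, use the geometric containment $K\times\cdots\times K\preceq K$ to realize it as $\Psi(t)$ for some $t\in\St_G(1)\cap K$, and then correct $u$ by $t$, using that $\Psi$ is a homomorphism on $\St_G(1)$ (sections multiply coordinatewise there because elements of the first-level stabilizer fix the letters). You are also right that only the weakly branch containment is used, not the finite-index conditions.

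One small point to fix: from the \emph{right}-coset equality $Ku_i=Kv_i$ you get $u_iv_i^{-1}\in K$, not $u_i^{-1}v_i\in K$; the latter is the \emph{left}-coset condition and follows only if $K$ is normal, which the paper's definition of a branching subgroup does not explicitly require. The paper sidesteps this by writing $u_i=k_iv_i$ with $k_i\in K$, choosing $k\in\St_G(1)\cap K$ with $\Psi(k)=(k_1,\ldots,k_d)$, and setting $v=k^{-1}u$, i.e.\ correcting on the left rather than the right. Your proof goes through verbatim after this one-line change.
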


\begin{proof}
By assumption since $u_i$ and $v_i$ lie in the same $K$-coset, there is $k_i\in K$ such that $u_i=k_iv_i$. Then
\[\Psi(u)=(u_1,\ldots, u_d)=(k_1v_1,\ldots, k_dv_d)=(k_1,\ldots,k_d)(v_1,\ldots, v_d).\]
Since $K \times \dots \times K \preceq K$, there is $k\in K$ such that $\Psi(k)=(k_1,\ldots,k_d)$ and thus, for $v=k^{-1}u$ we have 
\[\Psi(v)=\Psi(k)^{-1}\Psi(u)=(v_1,\ldots,v_d)\]
as required.
\end{proof}

The key component of the algorithm is the following Lemma
\begin{lemma}
\label{lem:lifting}
Suppose $G$ is a regular branch group generated by a finite set $S$ acting on a $d$-ary rooted tree $X^*$. Then, given a tuple $(g_1,g_2,\ldots,g_d)\in G^d$ and $\sigma\in\Sym(X)$ there is an algorithm deciding in a linear time of $\sum_{i=1}^d |g_i|_S$ if there is $g\in G$ such that $g=(g_1,g_2,\ldots,g_d)\sigma$, and finding a word in $S^{\pm1}$ representing $g$ of length at most $D\cdot\left(\sum_{i=1}^d |g_i|_S\right)+D$ for some $D>0$ not depending on $g_i$'s.
\end{lemma}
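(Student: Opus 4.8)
The plan is to reduce the problem about arbitrary tuples in $G^d$ to the case of tuples lying in the branching subgroup $K$, where Lemma~\ref{lem:uniform_liftingK} already does the work. First I would fix, once and for all, a transversal $T=\{t_1,\dots,t_m\}$ for $K$ in $G$ (with $t_1=1$ the representative of $K$ itself), chosen so that each $t_j$ is a short fixed word in $S^{\pm1}$; let $M_0=\max_j|t_j|_S$. Also fix, for every $(j_1,\dots,j_d)\in\{1,\dots,m\}^d$ and every $\sigma\in\Sym(X)$, the finite data of whether the element $(t_{j_1},\dots,t_{j_d})\sigma$ of $\mathrm{Aut}(X^*)$ actually lies in $G$; this is decidable because $[G:K]<\infty$ makes the relevant quotient $G/\mathrm{Core}$ finite, or more directly because membership of $(t_{j_1},\dots,t_{j_d})\sigma$ in $\Psi(\St_G(1))$ followed by a coset in $G$ depends only on $(j_1,\dots,j_d,\sigma)$ (this is essentially Lemma~\ref{lem:uniform_lifting}). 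Since this is a finite table independent of the input, looking it up is $O(1)$.

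Given the input $(g_1,\dots,g_d,\sigma)$, the algorithm runs as follows. In linear time in $\sum_i|g_i|_S$ compute for each $i$ the coset $Kg_i$, i.e.\ find the index $j_i$ with $Kg_i=Kt_{j_i}$; this is done by tracing the word for $g_i$ through the finite Schreier graph $\Gamma(G,K,S)$ from the base vertex, reading off the endpoint. Now $g=(g_1,\dots,g_d)\sigma$ lies in $G$ if and only if $(t_{j_1},\dots,t_{j_d})\sigma$ does, by Lemma~\ref{lem:uniform_lifting} applied with $u_i=g_i$, $v_i=t_{j_i}$ (the lemma is symmetric in $u,v$, so ``there is $g$'' transfers either direction). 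So consult the finite table: if $(t_{j_1},\dots,t_{j_d})\sigma\notin G$, output ``no''; otherwise fix once and for all a short word $\widehat{w}$ in $S^{\pm1}$ representing some preimage $\widehat g\in G$ with $\widehat g=(\widehat g_1,\dots,\widehat g_d)\sigma$ and $K\widehat g_i=Kt_{j_i}$ (again finite data, of bounded length $M_1$). Then $g\widehat g^{-1}\in\St_G(1)$ and $\Psi(g\widehat g^{-1})=(g_1\widehat g_1^{-1},\dots,g_d\widehat g_d^{-1})$ with each component in $K$ (same coset). Each $g_i\widehat g_i^{-1}$ has length at most $|g_i|_S+M_1$ as a word in $S^{\pm1}$, so Lemma~\ref{lem:uniform_liftingK} produces a word $w_0$ in $S^{\pm1}$ of length at most $C\sum_i(|g_i|_S+M_1)=C\sum_i|g_i|_S+CdM_1$ representing $g\widehat g^{-1}$. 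Output $w=w_0\widehat w$; this represents $g$ and has length at most $C\sum_i|g_i|_S+(CdM_1+M_1)$, so taking $D=\max\{C,CdM_1+M_1\}$ finishes the bound.

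For the running time: building $\Gamma$, the table of admissible $(j_1,\dots,j_d,\sigma)$, the words $\widehat w$, and invoking the Reidemeister--Schreier machinery behind Lemma~\ref{lem:uniform_liftingK} are all preprocessing that does not depend on the input. The per-input work is: $d$ walks through the finite graph $\Gamma$ of total cost $O(\sum_i|g_i|_S)$, one table lookup, $d$ word concatenations and one run of the linear-time lifting of Lemma~\ref{lem:uniform_liftingK}. Hence the whole thing is linear in $\sum_i|g_i|_S$ as claimed.

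The main obstacle I anticipate is the bookkeeping around the ``finite table'' step: one must be careful that deciding whether $(t_{j_1},\dots,t_{j_d})\sigma$ lies in $G$ (equivalently, selecting the correct preimage word $\widehat w$) is genuinely a finite, input-independent computation. The clean way to see this is exactly Lemma~\ref{lem:uniform_lifting}: whether a tuple of cosets $(Kt_{j_1},\dots,Kt_{j_d})$ together with $\sigma$ is realized by some $g\in\St_G(1)\sigma$ depends only on the cosets and $\sigma$, not on the chosen representatives, so there are only finitely many cases, each resolved once during preprocessing. A secondary, purely cosmetic point is that $\sigma$ ranges over $\Sym(X)$ which has size $d!$ — still a constant — so enlarging the table by this factor costs nothing asymptotically. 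Everything else is a routine combination of the Schreier-graph coset computation and the two preceding lemmas.
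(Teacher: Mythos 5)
Your proof is correct and follows essentially the same route as the paper's: compute the cosets $Kg_i$ in linear time via the Schreier graph of $K$ in $G$, use Lemma~\ref{lem:uniform_lifting} to reduce realizability to a finite precomputed table indexed by coset patterns and $\sigma$, and apply Lemma~\ref{lem:uniform_liftingK} to the resulting tuple in $K^d$. The only difference is cosmetic: the paper splits the precomputed correction into two factors (an element $h_\sigma$ realizing the permutation and a transversal tuple $(t_1,\ldots,t_d)$, giving $g=w_3w_2w_1$), whereas you merge them into a single representative $\widehat g$, which changes nothing in the bounds or the complexity.
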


\begin{proof}
Suppose $K$ is the branching subgroup in $G$. We first observe
\begin{equation}
\label{eqn:index}
K\times\cdots\times K<\Psi(\St_G(1))<G\times\cdots\times G.
\end{equation}
Since $[G:K]<\infty$ we get $[G\times\cdots\times G:K\times\cdots\times K]<\infty$ and thus~\eqref{eqn:index} yields $[G\times\cdots\times G:\Psi(\St_G(1))]<\infty$. Consider the Schreier graph $\widetilde\Gamma=\Gamma(G^d, \Psi(\St_G(1)),\tilde S)$ of $\Psi(\St_G(1))$ in $G^d$ with respect to the generating set $\tilde S$ of $G^d$:
\[\tilde S=\{(1,\ldots,1,s,1,\ldots,1)\colon s\in S, 1\leq i\leq d\},\]
where the nontrivial component in the expression above is at position $i$.

With the help of $\widetilde\Gamma$ we can determine if for a tuple $(g_1,\ldots,g_d)\in G^d$, where $g_i$'s are given by words over $S^{\pm1}$, there exists $g\in\St_G(1)$ with $\Psi(g)=(g_1,\ldots,g_d)$. This can be done in a linear time in $\sum_{i=1}^d|g_i|_S$ as we have to follow the path in $\widetilde\Gamma$ of that length. Moreover, for each particular tuple $(g_1,\ldots,g_d)\in\Psi(\St_G(1))$ we can find $g\in\Psi^{-1}(G^d)$ as a word in $S^{\pm1}$ simply by enumerating all elements in $\St_G(1)$ and checking whether their images under $\Psi$ coincide with $(g_1,\ldots,g_d)$. 

Let $\mathcal T$ denote the right transversal of $K$ in $G$ and let also $L$ be the maximal length of words representing elements of $\mathcal T$. Then $\mathcal T^d<G^d$ is a finite list of tuples such that for each $(t_1,\ldots,t_d)$ we can precompute if there exists $g\in\St_G(1)$ such that $\Psi(g)=(t_1,\ldots,t_d)$ and find the corresponding word in $S^{\pm1}$ representing such a $g$. Let $N$ denote the maximal length of all of such words.

Let $P$ be the finite subgroup of $\Sym(X)$ generated by permutations on the first level of the tree $X^*$ of all generators of $G$. For each $\sigma\in P$ we find an element $h_{\sigma}\in G$ as a word in $S^{\pm1}$ that induces permutation $\sigma$ on the first level of $X^*$. Let also $K$ be the maximal length of all $h_\sigma$, $\sigma\in P$ and of all their sections on the first level.

Now, suppose we have a tuple $(g_1,\ldots, g_d)\in G^d$, where $g_i$'s are given as words in $S^{\pm1}$, and $\sigma\in\Sym(X)$. This defines an automorphism $g=(g_1,\ldots, g_d)\sigma$ of $X^*$ that may or may not be in $G$. If $\sigma\notin P$, then $g\notin G$.
Otherwise, let $h=h_\sigma=(h_1,\ldots,h_d)\sigma\in G$ and let $w_1$ be a precomputed word in $S^{\pm1}$ of length at most $K$  representing $h$ with the sections of $h$ also given by words of length at most $K$. By construction, $g\in G$ if and only if
\[gh^{-1}=(g_1h^{-1}_{\sigma(1)},\ldots, g_dh^{-1}_{\sigma(d)})\in\St_G(1).\]

Using the Schreier graph $\Gamma$ of $K$ in $g$ for each $1\leq i\leq d$ we can find in a linear time in $|g_ih^{-1}_{\sigma(i)}|_S\leq|g_i|_S+K$ the coset representative $t_i$ of $Kg_ih^{-1}_{\sigma(i)}$ and define $k_i=g_ih^{-1}_{\sigma(i)}t_i^{-1}\in K$ and note that $k_i$ are words in $S^{\pm1}$ with
\[|k_i|_S=|g_ih^{-1}_{\sigma(i)}t_i^{-1}|_S\leq |g_i|_S+|h_{\sigma(i)}|_S+|t_i^{-1}|_S\leq |g_i|_S+K+L.\] 
Then we can write
\[gh^{-1}=(k_1t_1,\ldots,k_dt_d)=(k_1,\ldots,k_d)(t_1,\ldots,t_d).\]

By Lemma~\ref{lem:uniform_lifting} $gh^{-1}\in G$ if and only if $(t_1,\ldots,t_d)\in G$, which, as described above, can be precomputed. If $(t_1,\ldots,t_d)\notin G$ then $gh^{-1}\notin G$ and $g\notin G$. Otherwise, there exists a (precomputed) word $w_2$ in $S^{\pm1}$ of length at most $N$ such that \begin{equation}
\label{eqn:t}
    \Psi(w_2)=(t_1,\ldots,t_d).
\end{equation}

By Lemma~\ref{lem:uniform_liftingK} there is a constant $C>0$ (not dependent on $k_i$'s) and a word $w_3$ in $S^{\pm1}$ of length 
\[|w_3|_S\leq C\sum_{i=1}^d |k_i|_S\leq C\sum_{i=1}^\partial(|g_i|_S+K+L)=C\sum_{i=1}^d|g_i|_S+dC(K+L),\]
such that 
\begin{equation}
\label{eqn:k}
\Psi(w_3)=(k_1,\ldots,k_d).
\end{equation}
Finally, since 
\[g=gh^{-1}\cdot h=(k_1,\ldots,k_d)(t_1,\ldots,t_d)h ,
\]
we obtain a word $w=w_3w_2w_1$ in $S^{\pm1}$ representing $g$ of length 
\[|w|_S\leq |w_3|_S+|w_2|_S+|w_1|_S\leq C\sum_{i=1}^d|g_i|_S+ \bigl(dC(K+L) + N + K\bigr).\]
We finish the proof by setting
\[D=\max\{C,dC(K+L) + N + K\}.\]
\end{proof}

In the discussion above nucleus portraits are defined only for elements of contracting self-similar groups. However, we can extend this definition to a more general case and describe by nucleus portraits arbitrary automorphisms of $T_d$ that contract to the nucleus of $G$ along every branch of the tree. 

\begin{definition}
\label{def:cont_closure}
The \emph{contracting closure} $\mathcal N G$ of a contracting self-similar group $G<\Aut(T_d)$ with the nucleus $\mathcal N$ is a subgroup of $\Aut(T_d)$ consisting of automorphisms of $T_d$ that contract to $\mathcal N$ along every branch of the tree. In other words,
\[\mathcal NG=\{g\in\Aut(T_d)\mid \exists N\geq 1\ \text{ such that } \forall v\in X^N\colon g|_v\in\mathcal N\}.\]
\end{definition}

We define the nucleus portraits of elements of $\mathcal NG$ in the same way as for elements of $G$. Clearly, every element of $\mathcal NG$ can be uniquely defined by its nucleus portrait. Note that the fact that $\mathcal NG$ is a subgroup of $\Aut(T_d)$ follows from Algorithms~1 and~2 in Subsection~\ref{ssec:using_contracting}.
The following theorem addresses the membership problem of $G$ in $\mathcal NG$.  

\begin{theorem}
\label{thm:word_from_portrait_depth}
Let $G$ be a contracting self-similar regular branch group acting on $T_d$, generated by a symmetric set $S$ that contains the nucleus of $G$. There is an algorithm that, given a nucleus portrait of of an element $g\in\mathcal NG$ of depth $t$, decides whether $g\in G$ and in this case constructs a word in $S$ of length exponential in $t$, representing $g$. The complexity of this algorithm is exponential in $t$.
\end{theorem}

\begin{proof}
We will build the word representing $g$ from bottom up using Lemma~\ref{lem:lifting}. In the worst case, all leaves of the portrait of $g$ are at level $t$. Therefore, at the level $t$ all sections of $g$ belong to the nucleus, so their length with respect to $S$ is equal to 1. For every vertex of level $t-1$ by Lemma~\ref{lem:lifting} we can construct words of length $Dd+D$ representing section of $g$ at that vertex. Applying Lemma~\ref{lem:lifting} on each level, we inductively construct words of length up to
\[(Dd)^i+(Dd)^{i-1}D+(Dd)^{i-2}D+\cdots+(Dd)D+D=(Dd)^i\left(1-\frac{D}{1-Dd}\right)+\frac{D}{1-Dd}\]
that represent the sections of $g$ at vertices of level $t-i$, $0\leq i\leq t$.

Therefore, at level $0$, for $i=t$, we get a word $w$ representing $g$ of length
\begin{equation}
\label{eqn:exp_length_bound}
|w|_s\leq (Dd)^t\left(1-\frac{D}{1-Dd}\right)+\frac{D}{1-Dd},
\end{equation}
which is exponential in $t$ as desired.

The exponential-time complexity estimate also follows from iterative applications of Lemma~\ref{lem:lifting}.
\end{proof}

%We remark here that there is no known polynomial time algorithm that can test the aformentioned membership problem. Therefore it is not computationally feasible to randomly generate nucleus protraits of elements of the target group $G$ as arbitrary automorphisms of $T_d$ where internal vertices labeled by randomly chosen $\sigma\in Sym(X)$ and leaves randomly taken from the nucleus set of the target group $G$.

The next theorem provides implicit justification for the use of the portraits in cryptographic applications.  As stated in the introduction, we propose to use nucleus portraits to transmit elements of a contracting group since sending them as words in generators would potentialy reveal some information about the secret conjugator. Therefore, it is natural to ask if this is possible for the adversary to recover some word representing an element from the nucleus portrait of this element. Generally, we believe that this is a difficult task and no such fast algorithm is known. Theorem~\ref{thm:word_from_portrait_depth} implies that such an algorithm exists in the class of regular branch contracting groups, and that the length of the constructed word is polynomial in the length of the original word that was used to build the portrait. More specifically, we prove the following theorem.

\begin{theorem}
\label{thm:word_from_portrait}
Let $G$ be a contracting self-similar regular branch group acting on $T_d$, generated by a symmetric set $S$ that contains the nucleus of $G$. There is an algorithm that, given the nucleus portrait of an element $g\in G$ that can be defined by an (unknown) word in $S$ of length up to $n$, constructs a word in $S$ of polynomial in $n$ length, representing $g$. The complexity of this algorithm is polynomial in $n$.
\end{theorem}

% \begin{Thm}
% \label{thm:word_from_portrait}
% Let $G$ be a contracting self-similar regular branch group generated by the set $S$ that contains the nucleus of $G$. There is a polynomial time algorithm that, given $g\in G$ by its nucleus portrait and that can represented by a word in $S^{\pm1}$ of length $n$, constructs a word in $S^{\pm1}$ of  polynomial in $n$ length, representing $g$.
% \end{Thm}

% \begin{Thm}
% \label{thm:word_from_portrait}
% Let $G$ be a contracting self-similar regular branch group generated by the set $S$ that contains the nucleus of $G$. Suppose that $g\in G$ is an element represented by a word in $S^{\pm1}$ of length $n$. Then there is a polynomial-time algorithm that, given the nucleus portrait of $g$, constructs a word in $S^{\pm1}$ of  polynomial in $n$ length, representing $g$.
% \end{Thm}

\begin{proof}
Since $G$ is contracting, by Proposition~\ref{prop:log_depth} there is $R>0$ such that for each element of length $n$ the depth of nucleus portrait of this element is at most 
\begin{equation}
\label{eqn:t_from_n}
t=\ceil{R\log_2 n+R}.
\end{equation} 
Using the argument from the proof of Theorem~\ref{thm:word_from_portrait_depth} we build the word $w$ of length $|w|_S$ satisfying equation~\eqref{eqn:exp_length_bound} that  represents $g$. Expressing $t$ in terms of $n$ via equation~\eqref{eqn:t_from_n} yields:
\begin{multline*}
|w|_s\leq (Dd)^t\left(1-\frac{D}{1-Dd}\right)+\frac{D}{1-Dd}\leq (Dd)^{R\log_2 n+R+1}\left(1-\frac{D}{1-Dd}\right)+\frac{D}{1-Dd}\\
=(Dd)^{\log_{Dd}n^{R\log_2(Dd)}}(Dd)^{R+1}\left(1-\frac{D}{1-Dd}\right)+\frac{D}{1-Dd} \\
= n^{R\log_2(Dd)}(Dd)^{R+1}\left(1-\frac{D}{1-Dd}\right)+\frac{D}{1-Dd},
\end{multline*}
which is polynomial in $n$ as desired. The polynomial-time complexity estimate also follows from iterative applications of Lemma~\ref{lem:lifting}.
\end{proof}

\section{Length-Based Attack}
\label{sec:LBA}
The length-based attack (LBA) is a probabilistic attack that can be applied against many computational problems and cryptographic protocols, in particular those that are based on the variants of the SCSP, such as AAG and Ko-Lee protocols described in Section~\ref{sec:cryptosystems}.

Different variants of LBA against SCSP and other related problems were studied in the cases of braid groups~\cite{HughesTann02,GarberProbabilistic05,GarberLBA06,myasnikov_u:length-based_attack_on_braid_groups07}, Thompson's group~\cite{Thompson-LBA}, free groups and groups with generic free basis property~\cite{myasnikov_u:random_subgroups_and_analysis_of_length-based_and_quotient_attacks08}, and polycyclic groups~\cite{graber_kn:length-based_attack15}.

Let $G$ be a finitely generated group. By a \emph{length function} on $G^n$ we will call a function $l\colon G^n\to \mathbb N\cup\{0\}$ satisfying $l\big((g_1,\ldots,g_n)\big)=0$ if and only if $g_i=1_G$ for all $i$. The idea of LBA against the SCSP in $G$ (respectively, AAG problem over $G$) is the following. Given two conjugated tuples of group elements $\overline{a}=(a_1,\ldots,a_n)$ and $\overline{b}=(b_1,\ldots,b_n)$ LBA tries to find a conjugator $r$ in $G$ (respectively, in $\langle a_1,\ldots,a_n\rangle$) that conjugates $\overline{a}$ to $\overline{b}$ by iteratively choosing elements $r_1,r_2,\ldots, r_t$ from a precomputed finite set $\mathcal R$ such that for some predefined length function $l$:
\[l\left(\overline{a}^{r_1r_2\cdots r_kr_{k+1}}\overline{a}^{-1}\right)<l\left(\overline{a}^{r_1r_2\cdots r_k}\overline{b}^{-1}\right)\]
for all $0\leq k< t$ and such that 
\[l\left(\overline{a}^{r_1r_2\cdots r_{t}}\overline{b}^{-1}\right)=0,\] 
in which case $r=r_1r_2\cdots r_t$ is the required conjugator.

Since the set $\mathcal R$ is finite and $l(\overline{a}^{r_1r_2\cdots r_k}\overline{b}^{-1})$ decreases discretely by at least 1 every time, the whole attack represents a finite search. Note that under the assumption that the search is completed in its entirety, there is no difference in the success rates of the attacks but there may be some differences in time required to conduct these attacks depending on implementation details. In Algorithm~\ref{alg:lba} we recurrently go through all possible branches as soon as the length is decreased. Some of the cited implementations also impose additional memory restrictions that will affect the success rate, but also limit the time/space required for the attack. In our implementation we do not have any such limitations.

We will call elements of $\mathcal R$ \emph{conjugator factors}. The choice of the set $\mathcal R$ varies in different versions of LBA to improve the speed or effectiveness of the attack. In some versions of the attack only finite amount of memory is allocated, so that the search is not done over the entire search space. The algorithm for LBA that we implement and analyze in this paper, Algorithm~\ref{alg:lba}, does not make any assumptions on the finite memory size and includes one additional parameter $R=SearchRadius$ controlling the set $\mathcal R$ of conjugating factors. Namely, before running the attack we precompute $\mathcal R$ as the set of nontrivial elements in the ball $\mathcal B_R$ of radius $R$ in $G$. Thus, for $R=3$, our attack covers all of the attacks listed in~\cite{graber_kn:length-based_attack15} in the sense that the percentage of successes will be not smaller than the one in the listed attacks. We will also refer to the set $\mathcal B_R$ as the \emph{search ball} of the attack. Our source code for the attack written in GAP can be found in the GitHub repository~\cite{kahrobaei_ms:lba_github}.

\subsection{Complexity estimate for the LBA} 
Suppose we are given an instance of SCSP with the initial value of the length function $l(\overline{a}\overline{b}^{-1})=L$. To estimate the worst case complexity of LBA we observe that each step of the attack can be parameterized by a word over $\mathcal R$ of length at most $L$ since every additional conjugator $r\in\mathcal R$ reduces the length function by at least $1$. Therefore, the total number of steps that the attack can take is at most 
\begin{equation}
\label{eqn:complexity}
\sum_{i=0}^{L}|\mathcal R|^i=\frac{|\mathcal R|^{L+1}-1}{|\mathcal R|-1}.
\end{equation}
Thus, the worst time complexity will be exponential in $L$. In practice the number of steps is much smaller as the worst case complexity estimate assumes that each additional conjugator $r\in\mathcal R$ will decrease the length function exactly by 1, so that we have to go through all possible branches of the algorithm. However, if $r$ does not reduce the length, then the whole branch of the algorithm will be terminated at this stage.

In our implementation of LBA, as stated above, we use $\mathcal R=\mathcal B_R$ for some radius $R\geq 1$. Estimate~\eqref{eqn:complexity} suggests that the LBA may generally work faster for larger values of $R$ for groups that have smaller $\mathcal B_R$ (for example, if the group grows subexponentially or when the generators are involutions). However, we observed that this is not a sufficient condition as we found some instances of SCSP in $\textrm{IMG}(z^2+i)$ with $|r|=20$, $|a|=10$, and $R=5$ (with $|\mathcal B_5|=421$) that were terminated due to longer than 24 hour computation time.

%%%%%%%%%%%%%%%%%%%

\subsection{Length functions for LBA}
In the implementation of LBA, the choice of the length function is important (see, for example, \cite{GarberProbabilistic05,HT10}). Historically, the sum of the word lengths of the elements of was the main length function used in LBA attacks in the literature. However, since we propose to use nucleus portraits to uniquely identify elements of contracting groups, this length function is not available in this situation. Instead, we propose 2 length functions that can be computed from nucleus portraits of the involved elements.

\subsubsection{Portrait depth length function}
The main length function that we study here is the \emph{portrait depth length function}: 
\begin{equation}
\label{eqn:lengthd}
l_d\big((g_1,\ldots,g_n)\big)=\sum_{i=1}^n\partial(g_i),
\end{equation}
which is just the sum of the depths of the nucleus portraits of all the $g_i$'s.

\subsubsection{Portrait boundary size length function}

Another candidate for complexity function when group elements are given as nucleus portraits is the  \emph{portrait boundary size length function} based on the number of leaves of a nucleus portrait $s(g)$ as defined in Definition~\ref{portrait-depth-def}:
\begin{equation}
    \label{eqn:lengthb}
    l_b\big((g_1,\ldots,g_n)\big)=\sum_{i=1}^ns(g_i).
\end{equation}
While the range of values of this function is significantly larger than the one for the portrait depth length function (for the same input size), our experimental results showed that using $l_b$ for LBA was infeasible for even relatively short tuples $(g_1,\ldots,g_n)\in G^n$. The attack took too much time as there was a lot of branching of the algorithm involved.

%replace 'NewSize' in line 11 by 'NewLength' for consistency.
\begin{algorithm} 
\caption{The function LengthBasedAttack implementing the LBA}\label{alg:lba} 
\begin{algorithmic}[1]
\Function{LengthBasedAttack}{$a,b,SearchRadius$}
\State $B\gets $ (List of nontrivial elements in $G$ of length $\leq SearchRadius$) 
\Comment{compute conjugator factors}
\State $InitialConjugator\gets 1$\Comment{Initiate the candidate for the conjugator}
\State $InitialLength\gets$\Call{Length}{$a^{InitialConjugator}b^{-1}$}
\newline 
\Function{LengthBasedAttackRecursive}{$CurrentConjugator, CurrentLength$}
    \If{$CurrentLength=0$}
        \State \Return{$CurrentConjugator$}
    \EndIf
    \For{$ConjugatorFactor$ in $B$}
        \State $NewConjugator\gets CurrentConjugator * ConjugatorFactor$
        \State $NewSize\gets$\Call{Length}{$a^{NewConjugator}b^{-1}$}
        \If{$NewLength<CurrentLength$}
            \State $Outcome\gets$\Call{LengthBasedAttackRecursive}{$NewConjugator, NewSize$}
            \If{$Outcome\neq$ Fail}
                \State\Return{$Outcome$}
            \EndIf
        \EndIf
    \EndFor
    \State\Return{Fail}
\EndFunction 
\newline 
\State \Return{\Call{LengthBasedAttackRecursive}{$InitialConjugator, InitialLength$}}
\EndFunction
\end{algorithmic} 
\end{algorithm}

\begin{comment}
\begin{algorithm}
\caption{DBA Depth-First }
\label{algo1}
\begin{algorithmic}[1]
	\State Initialize $S=\{(\overline{b'},\rm{id_G})\}$.
	\While{$S \neq \emptyset$}
		\State Choose $(\overline{c},x) \in S$ such that $|\overline{c}|$ is minimal. Remove $(\overline{c},x)$
		\For{$i=1,\ldots, N_1$ and $\varepsilon=\pm 1$}
			\State Compute $\delta_{i,\varepsilon}=|\overline{c}|-|\overline{c}^{a_i^\varepsilon}|$
			\State \textbf{if} $\overline{c}^{a_i^\varepsilon}=\overline{b}$ \textbf{then} output inverse of $xa_i^\varepsilon$ and stop
			\If{$\delta_{i,\varepsilon}>0$} \Comment{length has been decreased}
				\State Add $(\overline{c}^{a_i^\varepsilon},xa_i^\varepsilon)$ to $S$
			\EndIf
		\EndFor
	\EndWhile
	\State Otherwise, output FAIL \Comment no more element to conjugate
\end{algorithmic}
\end{algorithm}
\end{comment}

%%%%%%%% Experimental Results for SCSP in Contracting Groups %%%%%%

\section{Experimental Results for LBA against SCSP in Contracting Groups}
\label{sec:statistics}

In this section we present experimental results for LBA against SCSP when group elements are only provided as nucleus portraits. The results incorporate variations in parameters of the SCSP and LBA such as the size of conjugator and radius $R$ of search ball $\B_R$ among others. Furthermore, groups are chosen to ensure variation in nucleus size and in complexity of algebraic structure to identify conspicuous associations between these parameters and degree of success of the attacks. In this regard, we performed calculations for groups with nucleus size ranging from 5 to 26 and the algebraic structure varying from virtually abelian groups to more complex group structures. Some of the selected groups are taken from the classification of groups generated by 3-state automata over 2-letter alphabet~\cite{bondarenko_gkmnss:full_clas32}. In that paper, every 3-state automaton over 2-letter alphabet is assigned a number $m$ from 1 to 5832 and is denoted by $\mathcal A_m$. The group generated by $\mathcal A_m$ is denoted by $\mathbb G(\mathcal A_m)$. Note that the notation in~\cite{bondarenko_gkmnss:full_clas32} was different: unlike in the present paper, left actions were used there, which explains the difference in group descriptions (e.g., the groups $(\Z \times \Z)\rtimes C_2$ and $\Z\wr C_2$, where $C_2$ is the acting group, are denoted in~\cite{bondarenko_gkmnss:full_clas32} by $C_2\ltimes (\Z \times \Z)$ and $C_2\wr\Z$, respectively). Additionally, all words in generators given in~\cite{bondarenko_gkmnss:full_clas32} need to be reversed in the notation of the present paper.

\subsection{Notation for the attack} 
\label{notation}

\subsubsection{Simultaneous Conjugacy Search Problem (SCSP)}

We will describe our results using the notation for SCSP given in the Introduction. Namely, we assume that we have two $n$-tuples $(a_1, a_2 \ldots, a_n)$ and $(b_1,b_2,\ldots,b_n)$ of elements of $G$ that are conjugate by an element $r\in G$ (i.e., $a_i^r=b_i$ for $1\leq i\leq n$). Following the description of LBA in Section~\ref{sec:LBA}, the radius of the search ball $\B_R$ is referred to as $R$.

\subsubsection{Runtime}

We report the time taken for computation as GAP-seconds. This is the \textit{runtime} in GAP (recorded in milliseconds) converted into seconds and rounded to the nearest integer. As per GAP-Reference Manual, Section~7.6, this is the time spent by the Central Processing Unit (CPU) with GAP functions (without child processes). We note that both, the overall time taken and GAP-seconds for calculations vary depending on the system specifications. 

Table~\ref{runtime} provides specifications of the system used for running the LBA and the comparison of runtime in GAP with the actual time taken for the computations.

\hspace{1cm}
\begin{table}[h]
\centering
\begin{tabular}{|p{6.0 cm}||c|c|c|c|c|}
    \hline
    
    Processor-specifications & \multicolumn{5}{|c|}{Intel(R) Core(TM)i7-5500U CPU @ 2.40GHz }  \\
    \hline
    RAM & \multicolumn{5}{|c|}{Installed RAM = 8.00 GB - Usable RAM = 7.89 GB} \\
    \hline
    System type & \multicolumn{5}{|c|}{64-bit operating system, x64 - based processor} \\
    \hline
    \hline
    GAP-Runtime & 263562 & 206844  & 253406 & 116250 & 191781 \\
    \hline
    Overall time (ms) & 264938 & 207710  & 253660 & 116424 & 191931 \\
    \hline 
    Time difference (ms) & 1376 & 866 &  254 & 174  & 150 \\
    \hline
    Time difference as \% of GAP-Runtime & 0.52\% & 0.42\% & 0.10 \% & 0.15 \% & 0.08\% \\ 
    \hline
\end{tabular}
\caption{ Comparison of GAP-Runtime with overall time taken in milliseconds to complete computations.} 
\label{runtime}
\end{table}

\subsubsection{Parameters}

We use the following set of parameters for each computation (corresponding to each table entry) recorded in the following subsections: 
\begin{itemize}
    \item In all experiments in this section we use the portrait depth length function $l_d$ defined in~\eqref{eqn:lengthd}.
    \item In all experiments we set $n = 5$, i.e., computations are performed using randomly generated 5-tuples $(a_1, a_2, \ldots, a_5)$ from the respective groups.
    \item Random elements of groups are generated by random freely reduced words over the symmetric generating sets of groups with additional general reduction that for each generator $s$ of a group of order 2, $s^{-1}$ is replaced by $s$. Additionally, in the case of Grigorchuk group and Universal Grigorchuk group, the random words are reduced further using the fact that $\langle b,c,d\rangle$ is the Klein group of size 4, to the standard reduced words of the form $(*)a*a*\cdots *a(*)$ with $*\in\{b,c,d\}$.
    \item The reduced word length of each $a_i$, that we denote by $|a|$, was set to either $10$ or $100$.
    \item Based upon an automaton group, the radius $R$ of search ball $\B_R$ is varied to analyze efficacy of the attack. Overall, the search radius varies from $1$ to $5$ in our study. Larger values of $R$ increase the success rate of the attack but considerably slow down the calculations. 
    \item The reduced word length $|r|$ of the conjugator $r$ is varied from $5$ up to $100$ to examine effectiveness of the attack.
\end{itemize} 

\subsection{The group $\mathbb G(\A_{750})\cong\Z\wr C_2$}
The automaton generating this group is shown in Figure~\ref{fig:750} and is of exponential growth in the sense of Sidki. The nucleus size $|\N|$ is 9. The group itself is virtually abelian with all the sections of generators belonging to an abelian subgroup $\langle a,c\rangle$. The nucleus $\N$ can be found using \verb"AutomGrp" as follows: 

\vspace{2mm}

\begin{verbatim}
G_750:= AutomatonGroup("a = (c,a)(1,2), b = (c,a), c = (a,a)");
< a, b, c >
gap> GeneratingSetWithNucleus(G_750);
[ 1, a, b, c, a^-1, b^-1, c^-1, c*a^-1, a*c^-1 ]
\end{verbatim}
\vspace{2mm}

Experimental results for LBA attack for $\mathbb G(\A_{750})$ are recorded in Table~\ref{table-750}. 

We note that, given the parameters, the smallest non-trivial ball $\B_1$ (that only contains the generators and their inverses) succeeds about half of the time in finding the conjugator $r$. Moreover, increasing the radius $R$ of the search ball $\B_R$ from 1 to 2 increases the success of attack by at least 20 percentage points resulting in at least 70 percent success. This shows that the attack works very well for this contracting group. Given the very high success rate across our choice of parameters for SCSP, we have omitted results for search balls with larger radii for this group. The time taken to perform iterations for $\A_{750}$ is relatively small compared to most other contracting groups in our study (when compared with similar test parameters of course).

\begin{figure}%[ht]
    \centering
    \begin{tikzpicture}

    \node[state] (q1) {$a$};
    %\node[state, above of=q1] (q2) {$b$};
    \node[state, right of=q1] (q2) {$b$};
    %\node[state, below of=q4] (q0) {$1$};
    \node[state] (q3) at (1.5, 2.5) {$c$};

    \draw (q1) edge[bend left , left] node{$0|1$} (q3);
    \draw (q1) edge[loop left] node{$1|0$} (q1);

    \draw (q2) edge[bend left ,below] node{$1|1$} (q1);
    \draw (q2) edge[bend right , right] node{$0|0$} (q3);

    \draw (q3) edge[right] node{$0|0 , 1|1$} (q1);

    \end{tikzpicture}
\caption{Automaton $\A_{750}$ generating the group $\Z \wr C_2$}
    \label{fig:750}
\end{figure}
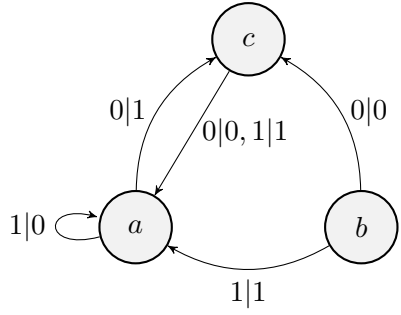

\hspace{1cm}
\small{
\begin{table}[h!]
\centering
\begin{tabular}{|p{6.2cm}||c|c||c|c||c|c|}
    \hline
    Automaton group & \multicolumn{6}{|c|}{$\mathbb G(\mathcal A_{750})\cong\Z \wr C_2$} \\
    \hline
    \hline
    Reduced word length of conjugator $r$ & \multicolumn{2}{|c||}{20} & \multicolumn{2}{|c||}{30} & \multicolumn{2}{|c|}{100} \\ 
    \hline
    Radius $R$ of the search ball $\B_R$ & 1 & 2 & 1 & 2 & 1 & 2 \\
    \hline
    Percentage of success (\%) & 70.00 & 96.67 & 60.00 & 86.67 & 46.67 & 70.00 \\
    \hline
    GAP-Seconds (average) & 10.87 & 13.53 & 8.26 & 965.33 & 157.73 & 525.63  \\
    % GAP-Seconds & 326 & 406 & 248 & 28960 & 4732 & 15769  \\
    \hline 
\end{tabular}

\caption{Average time taken and success rate for 30 iterations each for different radii of search ball $\mathcal{B}_R$ and reduced word length of conjugator  for the contracting automaton group $\Z \wr C_2$ generated by $\A_{750}$; $|a| = 10$ and $n = 5$ for all iterations.} 
\label{table-750}
\end{table}
}

\subsection{The group $\mathbb G(\A_{775})\cong \textrm{IMG} \left( \left( {\frac{z-1}{z+1}}\right)^2\right)\rtimes C_2$}

The automaton generating this group is shown in Figure~\ref{fig:775} and is of exponential growth in the sense of Sidki. The nucleus size $|\N|$ is 8. The nucleus $\N$ can be found using \verb"AutomGrp" as follows: 

\vspace{2mm}

\begin{verbatim}
G_775 := AutomatonGroup("a = (a,a)(1,2), b = (c,b), c = (a,a)");
< a, b, c >
gap> GeneratingSetWithNucleus(G_775);
[ 1, a, b, c, a*b, a*c, b^-1*a^-1, a*b^-1*a^-1 ]
\end{verbatim}
\vspace{2mm}

Experimental results for LBA attack for $\mathbb G(\A_{775})$ are recorded in Table~\ref{table-775}.

Although both contracting groups, generated by automata $\A_{750}$ and $\A_{775}$ have comparable nucleus sizes $|\N|$, there is a notable difference in the success of the LBA. This is evident by little to no success when the conjugator has a relatively large reduced word length such as $|r| = 100$. Here we observe that although there is increase in the success rate and time taken for LBA as we increase the search radius $R$, the pairs of search balls $\{\B_2 , \B_3 \}$, and $\{\B_4 , \B_5\}$ produce very similar results.    
  
Given the low success rate of LBA for the aforementioned parameters, we performed further simulations for a different set of parameters that have significantly greater reduced length $|a|$ of conjugated elements. These results are recorded in Table~\ref{tab:table-775-big}. In our simulations LBA is completely unsuccessful for these parameters for $\mathbb G(\A_{775})$.

\hspace{1cm}
\begin{center}
\begin{table}
\centering
\setlength{\tabcolsep}{1.8pt}
\begin{tabular}{|p{5.1cm}||c|c|c|c||c|c|c|c||c|c|c|c|}
    \hline    
    Automaton group & \multicolumn{12}{|c|}{$\mathbb G(\mathcal A_{775})\cong \textrm{IMG} \left( \left( {\frac{z-1}{z+1}}\right)^2\right)\rtimes C_2$} \\
    \hline
    \hline
    Reduced word length of conjugator $r$ & \multicolumn{4}{|c||}{20} & \multicolumn{4}{|c||}{30} & \multicolumn{4}{|c|}{100} \\
    \hline
    Radius $R$ of the search ball $\B_R$ & 2 & 3 & 4 & 5 & 2 & 3 & 4 & 5 & 2 & 3 & 4 & 5 \\
    \hline
    Percentage of success (\%) & 10 & 13.33 & 46.67 & 46.67 & 13.33 & 13.33 & 33.33 & 36.67 & 0 & 0 & 3.33 & 0.00 \\
    \hline
    GAP-seconds (average) & 3.7 & 9.97 & 32.67 & 47.9 & 7.23 & 10.93 & 62.5 & 83.8 & 11.3 & 28.23 & 72.47 & 234 \\
    % GAP-seconds & 111 & 299 & 980 & 1437 & 217 & 328 & 1875 & 2514 & 339 & 847 & 2174 & 7020 \\

    \hline 
\end{tabular}
\caption{Average time taken and success rate for 30 iterations each for different radii of search ball $\mathcal{B}_R$ and reduced word length of conjugator  for the automaton group $\textrm{IMG} \left( \left( \frac{z-1}{z+1}\right)^2\right)\rtimes C_2$ generated by $\A_{775}$; $|a| = 10$ and $n = 5$ for all iterations.} 
\label{table-775}
\end{table}
\end{center}

%%%%%%%%% Table 775 - different parameters %%%%%%%%%%%

\hspace{1cm}
\begin{table}[h]
    \centering
    \begin{tabular}{|p{8.0cm}||c|c|c|}
    \hline
    Automaton group & \multicolumn{3}{|c|}{$\mathbb G(\mathcal A_{775})\cong \textrm{IMG} \left( \left( {\frac{z-1}{z+1}}\right)^2\right)\rtimes C_2$} \\
    \hline
    \hline
    Reduced word length of conjugator $r$ & 100 & 100 & 100  \\
    \hline
    Reduced word length of conjugated elements, $|a|$ & 100 & 100 & 100  \\
    \hline
    Radius $R$ of the search ball $\B_R$ & 2 & 3 & 4 \\
    \hline 
    Percentage of success (\%) & 0 & 0 & 0 \\
    \hline
    GAP-seconds (average) & 64.6 & 274.17 & 3009.33 \\
    % GAP-seconds & 1938 & 8225 & 90280 \\
    \hline
    \end{tabular}
    \caption{Average time taken and success rate for 30 iterations each for different radii of search ball $\mathcal{B}_R$ and SCSP parameters  for the automaton group $\textrm{IMG} \left( \left( {\frac{z-1}{z+1}}\right)^2\right)\rtimes C_2$ generated by $\A_{775}$. Number of conjugated elements $n = 5$ for all iterations.}
    \label{tab:table-775-big}
\end{table}

%%%%%%%%%%%%%%%%%%%%%%%%%%%%%%%%%%%%%%%%%%%%%%%%%%%%%%

%%%%%%%%% 775 - automaton %%%%%%%%%%%

\begin{figure}%[ht]
    \centering
    \begin{tikzpicture}

    \node[state] (q1) {$a$};
    \node[state, right of=q1] (q2) {$b$};
    \node[state] (q3) at (1.5, 2.5) {$c$};

    \draw (q1) edge[loop left] node{$1|0 , 0|1$} (q1);

    \draw (q2) edge[loop right] node{$1|1$} (q2);
    \draw (q2) edge[right] node{$0|0$} (q3);

    \draw (q3) edge[left] node{$0|0 , 1|1$} (q1);

    \end{tikzpicture}
\caption{Automaton $\A_{775}$ generating the group $\textrm{IMG} \left( \left( {\frac{z-1}{z+1}}\right)^2\right)\rtimes C_2$.}
    \label{fig:775}
\end{figure}
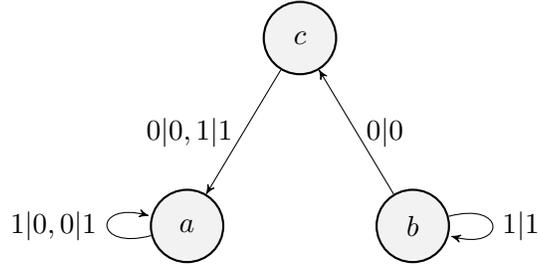

\subsection{The group $\mathbb G(\A_{2277})\cong (\Z \times \Z)\rtimes C_2$}

The automaton generating this group is shown in Figure~\ref{fig:2277} and is of exponential growth in the sense of Sidki. The nucleus size $|\N|$ is 16. The group contains $\Z^2$ as a subgroup of index 2. The nucleus $\N$ can be found using \verb"AutomGrp" as follows: 

\vspace{2mm}

\begin{verbatim}
gap> G_2277:= AutomatonGroup("a = (c,c)(1,2), b = (a,a)(1,2), c = (b,a)");
< a, b, c >
gap> GeneratingSetWithNucleus(G_2277);
[ 1, a, b, c, a*b, c*a, b*c, a*c, c*b, b*a, b*c*a, a*b*c, c*a*b, a*c*a, c*a*c, c*b*c ]
\end{verbatim}
\vspace{2mm}

Experimental results for LBA attack for $\mathbb G(\A_{2277})$ are recorded in Table~\ref{table-2277}.

Similarly to $\mathbb G(\A_{750})$, the smallest non-trivial ball $\B_1$ succeeds every time in finding the conjugator $r$. Given the hundred percent success rate across all SCSP parameters displayed in the Table~\ref{table-2277}, no further simulations were performed using search balls with larger radii.
 
\hspace{1cm}
\begin{table}
 \centering
\begin{tabular}{|l||c||c||c|}
    \hline
    
    Automaton group & \multicolumn{3}{|c|}{$\mathbb G(\mathcal A_{2277})\cong(\Z \times \Z)\rtimes C_2$} \\
    \hline
    \hline
    Reduced word length of conjugator $r$& 20 & 30 & 100 \\
    \hline
    Radius $R$ of the search ball $\B_R$ & 1 & 1 & 1  \\
    \hline
    Percentage of success (\%) & 100 & 100 & 100  \\
    \hline
    GAP-seconds (average) & 3.93 & 11.3 & 23.97  \\
    % GAP-seconds & 118 & 339 & 719  \\

    \hline 
\end{tabular}
\caption{
Average time taken and success rate for 30 iterations each for different radii of search ball $\mathcal{B}_R$ and SCSP parameters  for the automaton group $(\Z \times \Z)\rtimes C_2$ generated by $\A_{2277}$; $|a| = 10$ and $n = 5$ for all iterations. } 
\label{table-2277}
\end{table}

\begin{figure}%[ht]
    \centering
    \begin{tikzpicture}

    \node[state] (q1) {$a$};
    \node[state, right of=q1] (q2) {$b$};
    \node[state] (q3) at (1.5, 2.5) {$c$};

    \draw (q1) edge[bend left , left] node{$0|1 , 1|0$} (q3);

    \draw (q2) edge[bend left , below] node{$0|1 , 1|0$} (q1);

    \draw (q3) edge[bend left , right] node{$0|0$} (q2);
    \draw (q3) edge[right] node{$1|1$} (q1);

    \end{tikzpicture}
\caption{Three-state Automaton $\A_{2277}$ generating the group $(\Z \times \Z)\rtimes C_2$}
    \label{fig:2277}
\end{figure}
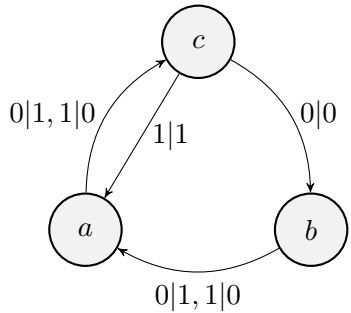

\subsection{The group $\mathbb G(\A_{2287})\cong \textrm{IMG}  \left( {\frac{z^2+2}{1-z^2}}\right)$}

The automaton generating this group is shown in Figure~\ref{fig:2287} and is of exponential growth in the sense of Sidki. The nucleus size $|\N|$ is 26 which is the largest among groups generated by 3-state automata in our study. There is not much information about the algebraic structure of this group. The nucleus $\N$ can be found using \verb"AutomGrp" as follows: 

\vspace{2mm}

\begin{verbatim}
gap> G_2287:= AutomatonGroup("a = (a,a)(1,2), b = (c,a)(1,2), c = (b,a)");
< a, b, c >
gap> GeneratingSetWithNucleus(G_2287);
[ 1, a, b, c, b^-1, c^-1, a*b, a*c, b^-1*a^-1, c^-1*a^-1, a*b^-1, a*c^-1, 
  b*a^-1, c*a^-1, a*b^-1*a^-1, a*c^-1*a^-1, a*b*a^-1, a*c*a^-1, b*a*c^-1, 
  a*b*a*c^-1, c*a^-1*b^-1*a^-1, a*c*a^-1*b^-1*a^-1, b^-1*c, c^-1*b, a*b^-1*c, 
  a*c^-1*b ]
\end{verbatim}
\vspace{2mm}

Experimental results for LBA attack for $\mathbb G(\A_{2287})$ are recorded in Table~\ref{table-2287}. 
Among the 3-state automaton generated groups in our study, the simulations take the greatest time to complete for this group. One reason for this is the big nucleus size $|\N|$. We note that the attack is not successful for modest SCSP parameters. The simulations involving greater search radius $R$ and conjugator reduced length $|r|$ were terminated after 24 hours. We conclude that this group has the highest cost of running LBA attack simulations.

\hspace{1cm}
\begin{table}
\centering
\begin{tabular}{|l||c||c||c|}
    \hline
    Automaton group & \multicolumn{3}{|c|}{$\mathbb G(\mathcal A_{2287})\cong \textrm{IMG}  \left( {\frac{z^2+2}{1-z^2}}\right)$} \\
    \hline
    \hline
    Reduced word length of conjugator $r$ & 5 & 10 & 20 \\
    \hline
    Radius of the search ball $\B_R$ & 2 & 2 & 2  \\
    \hline
    Percentage of success (\%) & 36.67 & 3.33 & 0.00  \\
    \hline
    GAP-seconds (average) & 103.13 & 154.67 & 235.7  \\
    % GAP-Runtime (seconds) & 3094 & 4640 & 7071  \\

    \hline 
\end{tabular}
\caption{Average time taken and success rate for 30 iterations each for different radii of search ball $\mathcal{B}_r$ and reduced word length of conjugator  for the contracting group $\textrm{IMG}  \left( {\frac{z^2+2}{1-z^2}}\right)$ generated by automaton $\A_{2287}$; $|a| = 10$ and $n = 5$ for all iterations.} 
\label{table-2287}
\end{table}

\begin{figure}%[ht]
    \centering
    \begin{tikzpicture}

    \node[state] (q1) {$a$};
    \node[state, right of=q1] (q2) {$b$};
    \node[state] (q3) at (1.5, 2.5) {$c$};

    \draw (q1) edge[loop left] node{$0|1 , 1|0$} (q1);

    \draw (q2) edge[left] node{$0|1$} (q3);
    \draw (q2) edge[below] node{$1|0$} (q1);

    \draw (q3) edge[bend left , right] node{$0|0$} (q2);
    \draw (q3) edge[left] node{$1|1$} (q1);

    \end{tikzpicture}
\caption{Automaton $\A_{2287}$ generating the group $\textrm{IMG}  \left( {\frac{z^2+2}{1-z^2}}\right)$}
    \label{fig:2287}
\end{figure}
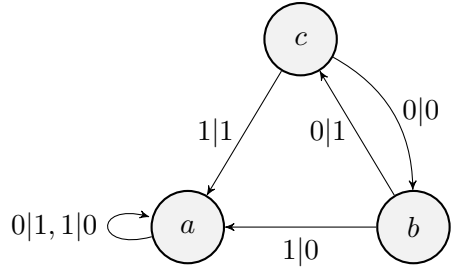

\subsection{The group $\textrm{IMG}(z^2 + i)$}

The group $G$ generated by a 4-state automaton in Figure~\ref{fig:img(z^2+i)} of bounded growth in the sense of Sidki, is isomorphic to $\textrm{IMG}(z^2+i)$ and was shown to have intermediate growth by Bux and P\'erez in~\cite{bux_p:iter_monodromy}. A finite $L$-presentation of $G$ was explicitly calculated by Grigorchuk, \v Suni\'c, and the third author in~\cite{grigorch_ss:img}. In the last paper it was also shown that $G$ is regular branch. The nucleus $\mathcal N$ has size 4 and can be found using \verb"AutomGrp" as follows: 

\vspace{2mm}

\begin{verbatim}
G := AutomatonGroup("a = (1,1)(1,2), b = (a,c), c = (b,1)");
< a, b, c >
gap> GeneratingSetWithNucleus(G);
[ 1, a, b, c ]
\end{verbatim}
\vspace{2mm}

Experimental results for LBA attack for $\textrm{IMG}(z^2+i)$ are recorded in Table~\ref{table-img(z^2+i)}. We also note that we ran some tests for this group for $|r|=20$, $|a|=10$, and $R=5$ and observed a higher success rate (up to 30\%), but also a large time variability. While some of the runs finished successfully, there was one run that we had to terminate after about 50 hours of computation without. The initial length function value for that instance was $38$.

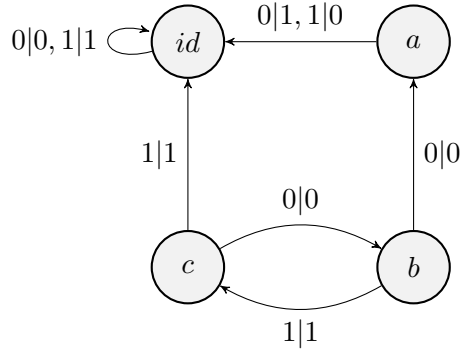
\begin{figure}%[ht]
    \centering
    \begin{tikzpicture}

    \node[state] (q1) {$c$};
    \node[state, right of=q1] (q2) {$b$};
    \node[state, above of=q2] (q3) {$a$};
    \node[state, above of=q1] (q4) {$id$};

    \draw (q1) edge[bend left , above] node{$0|0$} (q2);
    \draw (q1) edge[left] node{$1|1$} (q4);

    \draw (q2) edge[bend left , below] node{$1|1$} (q1);
    \draw (q2) edge[right] node{$0|0$} (q3);

    \draw (q3) edge[above] node{$0|1 , 1|0$} (q4);

    \draw (q4) edge[loop left] node{$0|0 , 1|1$} (q4);

    \end{tikzpicture}
\caption{Automaton generating the group $\textrm{IMG}(z^2 +i)$}
    \label{fig:img(z^2+i)}
\end{figure}

\hspace{1cm}
\begin{table}[h]
\centering
\begin{tabular}{|p{6.2cm}||c|c||c|c||c|c|}
    \hline    
    Automaton group & \multicolumn{6}{|c|}{$\textrm{IMG}(z^2 + i)$} \\
    \hline
    \hline
    Reduced word length of conjugator $r$ &  \multicolumn{2}{|c||}{10} & \multicolumn{2}{|c||}{20} & \multicolumn{2}{|c|}{30} \\
    \hline
    Radius $R$ of the search ball $\B_R$ & 3 & 4 & 3 & 4 & 3 & 4 \\
    \hline
    Percentage of success (\%) & 16.67 & 33.33 & 0.00 & 3.33 & 0.00 & 0.00 \\
    \hline
    GAP-seconds (average) & 42.27 & 115.63 & 50.6 & 660.2 & 111.87 & 355.1  \\
    % GAP-seconds & 1268 & 3469 & 1518 & 19806 & 3356 & 10653  \\

    \hline 
\end{tabular}
\caption{Average time taken and success rate for 30 iterations each for different radii of search ball $\mathcal{B}_R$ and reduced word length of conjugator  for the contracting group $\textrm{IMG}(z^2 + i)$; $|a| = 10$ and $n = 5$ for all iterations.} 
\label{table-img(z^2+i)}
\end{table}

\subsection{Grigorchuk group $\G$}
\label{ssec:grig_empirical}
Grigorchuk group $\G$ is with no doubt the most famous and studied automaton group. We refer the reader to~\cite{grigorch:solved} for a detailed survey of properties and problems related to this group. We will point out here that it is not a linear group as a group of intermediate growth. It was suggested as a possible platform group for AAG cryptosystem in~\cite{grigorchuk2019keyagreement}. It is generated by the bounded (in the sense of Sidki) automaton displayed in Figure~\ref{fig:aut_grig} and has the nucleus of size 5. The nucleus $\N$ can be found using \verb"AutomGrp" as follows: 

\vspace{2mm}

\begin{verbatim}
gap> G := AG_Groups.GrigorchukGroup;
< a, b, c, d >
gap> AssignGeneratorVariables(G);
#I  Assigned the global variables [ a, b, c, d ]
gap> GeneratingSetWithNucleus(G);
[ 1, a, b, c, d ]
\end{verbatim}
\vspace{2mm}

Experimental results for LBA attack for $\G$ are recorded in Table~\ref{table-grig}.
Given the wide interest in Grigorchuk group, we ran further simulations for additional set of parameters that have significantly greater reduced length $|a|$ of conjugated elements. These results are recorded in Table~\ref{tab:table-grig-big}. In these simulations LBA fails to find a conjugator for SCSP.

\hspace{1cm}
\begin{table}
\setlength{\tabcolsep}{2pt}
\centering
\begin{tabular}{|p{6.2cm}||c|c|c||c|c|c||c|c|c|}
    \hline
    Automaton group & \multicolumn{9}{|c|}{Grigorchuk group $\G$} \\
    \hline
    \hline
    Reduced word length of conjugator $r$& \multicolumn{3}{|c||}{20} & \multicolumn{3}{|c||}{30} & \multicolumn{3}{|c|}{100} \\
    \hline
    Radius $R$ of the search ball $\B_R$ & 2 & 3 & 4 & 2 & 3 & 4 & 2 & 3 & 4 \\
    \hline
    Percentage of success (\%) & 0.00 & 3.33 & 0.00 & 0.00 & 3.33 & 3.33 & 0.00 & 0.00 & 0.00 \\
    \hline
    GAP-seconds (average) & 3.63 & 14.5 & 40.53 & 4.5 & 22.23 & 39.8 & 6.7 & 13.9 & 88.13 \\
    % GAP-seconds & 109 & 435 & 1216 & 135 & 667 & 1194 & 201 & 417 & 2644 \\

    \hline 
\end{tabular}
\caption{Average time taken and success rate for 30 iterations each for different radii of search ball $\mathcal{B}_R$ and reduced word length of conjugator  for the Grigorchuk group; $|a| = 10$ and $n = 5$ for all iterations.} 
\label{table-grig}
\end{table}

\hspace{1cm}
\begin{table}[h]
    \centering
    \begin{tabular}{|p{8.0cm}||c|c|c|}
    \hline
    Automaton group & \multicolumn{3}{|c|}{Grigorchuk group} \\
    \hline
    \hline
    Reduced word length of conjugator $r$ & 100 & 100 & 100  \\
    \hline
    Reduced word length of conjugated elements, $|a|$ & 100 & 100 & 100  \\
    \hline
    Radius $R$ of the search ball $\B_R$ & 2 & 3 & 4 \\
    \hline 
    Percentage of success (\%) & 0.00 & 0.00 & 0.00 \\
    \hline
    GAP-seconds (average) & 8.2 & 59.53 & 200.2 \\
    \hline
    % GAP-seconds & 246 & 1786 & 6006 \\
    % \hline
    \end{tabular}
    \caption{Average time taken and success rate for 30 iterations each for different radii of search ball $\mathcal{B}_R$ and SCSP parameters for the Grigorchuk group. Number of conjugated elements $n = 5$ for all iterations.}
    \label{tab:table-grig-big}
\end{table}

\subsection{Universal Grigorchuk group $\G_U$}

The Universal Grigorchuk group $\G_U$ was introduced in~\cite[Problem~9.5]{grigorch:solved} as the group that has all Grigorchuk groups $G_{\omega}$ from~\cite{grigorch:degrees} as quotients. Therefore, it is not a linear group as it has a quotient $\G$ of intermediate growth. Moreover, it is generated by an automaton of exponential growth, so the results from~\cite{bondarenko_bsz:conjugacy} cannot be used to construct a conjugator in a larger group. It was observed by Nekrashevych that $\G_U$ has a self-similar action on the 6-ary tree defined by the following wreath recursion:
\[\begin{array}{l}
a = (1, 1, 1, 1, 1, 1)(1,4)(2,5)(3,6),\\
b = (a, a, 1, b, b, b),\\
c = (a, 1, a, c, c, c),\\
d = (1, a, a, d, d, d).
\end{array}\]
The nucleus $\N$ can be found using \verb"AutomGrp" as follows: 
\vspace{2mm}
\begin{verbatim}
gap> G_U := AG_Groups.UniversalGrigorchukGroup;
< a, b, c, d >
gap> GeneratingSetWithNucleus(G_U);
[ 1, a, b, c, d ]
\end{verbatim}
\vspace{2mm}

Experimental results for LBA attack for $\G_U$ are recorded in Table~\ref{table-uni-grig}. We note that due to the large degree of the tree on which $\G_U$ acts, the simulations for this group are the most time consuming among all contracting groups in our study. Some of the reported simulations took over 48 hours to complete. Therefore, the experimental results are reported for relatively small range of parameters. 

\hspace{1cm}
\begin{table}[h]
\centering
\begin{tabular}{|p{6.2cm}||c|c||c|c|}
    \hline
    Automaton group & \multicolumn{4}{|c|}{Universal Grigorchuk group $\G_U$} \\
    \hline
    \hline
    Reduced word length of conjugator $r$ & \multicolumn{2}{|c||}{5} & \multicolumn{2}{|c|}{7}  \\
    \hline
    Radius $R$ of the search ball $\B_R$ & 2 & 3 & 3 & 4   \\
    \hline
    Percentage of success (\%) & 0.00 & 50.00 & 0.00 & 30.00   \\
    \hline
    GAP-seconds (average) & 458.79 & 2174.71 & 4009.83 & 9408.09    \\

    \hline 
\end{tabular}
\caption{Average time taken and success rate for 30 iterations each for different radii of search ball $\mathcal{B}_R$ and reduced word length of conjugator  for the Universal Grigorchuk group; $|a| = 10$ and $n = 5$ for all iterations.} 
\label{table-uni-grig}
\end{table}

\subsection{Basilica group $\mathcal{B}\cong \textrm{IMG}(z^2-1)$}
\label{ssec:basilica_empirical}

Basilica group $\B=\textrm{IMG}(z^2-1)$ is among the automaton groups proposed as platform for AAG protocol in~\cite{grigorchuk2019keyagreement}. The group was first studied in~\cite{grigorch_z:basilica,grigorch_z:basilica_sp} and has been an important example in many situations. It is torsion free, has exponential growth, amenable but not elementary (and not even subexponentially) amenable~\cite{bartholdi_v:amenab}, has trivial Poisson boundary, is weakly branch, and has many other interesting properties. It is generated by bounded automaton in the The automaton generating the Basilica group is shown in Figure~\ref{fig:basilica}.

The nucleus $\N$ can be found in GAP using the package \verb"AutomGrp" as follows: 

\vspace{2mm}

\begin{verbatim}
gap> B := AutomatonGroup("u = (v,1)(1,2), v = (u,1)");
< u, v >
gap> GeneratingSetWithNucleus(B);
[ 1, u, v, u^-1, v^-1, u^-1*v, v^-1*u ]
\end{verbatim}
\vspace{2mm}

Experimental results for LBA attack for $\B$ are recorded in Table~\ref{table-basilica}, which shows that the attack had no success for conjugatros of length at least 20. We note that there is no essential time difference for computations with search radius $2$ even when the reduced word length of conjugator $r$ is increased from $30$ to $100$. This is because in most cases iterations terminate with relatively little search for the conjugator since very few (if any) elements in $\B_R$ tend to decrease the value of the length function.

Moreover, the time taken for LBA iterations to conclude significantly increases as the radius of search ball $\B_R$ is increased from $2$ to $3$. In particular, for $|r| = 30$ and $R = 3$, a set of 10 iterations was terminated after running for over $60$ hours without conclusion.

\hspace{1cm}
\begin{table}[h]
\centering
\begin{tabular}{|p{6.2cm}||c|c||c|c||c||c|}
    \hline
    
    Automaton group & \multicolumn{6}{|c|}{Basilica group $\B$} \\
    \hline
    \hline
    Reduced word length of conjugator $r$ & \multicolumn{2}{|c||}{10} & \multicolumn{2}{|c||}{20} & \multicolumn{1}{|c||}{30} & \multicolumn{1}{|c|}{100} \\
    \hline
    Radius $R$ of the search ball $\B_R$ & 2 &3 & 2 & 3 & 2  & 2\\
    \hline
    Percentage of success (\%) & 6.67 & 33.33 & 0.00 & 0.00 & 0.00 & 0.00 \\
    \hline
    GAP-seconds (average) & 41.04 & 6871.21 & 62.6 & 5089.57 & 80.1  & 51.77  \\
    % GAP-seconds & 1878 & 152687 & 2403  & 1553  \\

    \hline 
\end{tabular}
\caption{Average time taken and success rate for 30 iterations each for different radii of search ball $\mathcal{B}_R$ and reduced word length of conjugator  for the Basilica group; $|a| = 10$ and $n = 5$ for all iterations.} 
\label{table-basilica}
\end{table}

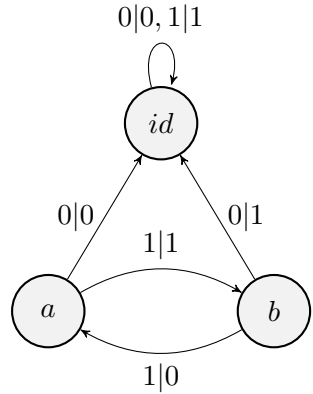
\begin{figure}%[ht]
    \centering
    \begin{tikzpicture}

    \node[state] (q1) {$a$};
    \node[state, right of=q1] (q2) {$b$};
    \node[state] (q3) at (1.5, 2.5) {$id$};

    \draw (q1) edge[left] node{$0|0$} (q3);
    \draw (q1) edge[bend left , above] node{$1|1$} (q2);

    \draw (q2) edge[right] node{$0|1$} (q3);
    \draw (q2) edge[bend left , below] node{$1|0$} (q1);

    \draw (q3) edge[loop above] node{$0|0 , 1|1$} (q3);
    %\draw (q3) edge[left] node{$1|1$} (q1);

    \end{tikzpicture}
\caption{Automaton $\A_{\mathcal{B}}$ generating the Basilica group.}
    \label{fig:basilica}
\end{figure}

\subsection{$p$-Basilica groups}
\label{ssec:7basilica_empirical}
A natural generalization of the Basilica group to the $p$-ary tree, called $p$-Basilica groups, was recently proposed in~\cite{didomenico_fnt:p-basilica22}. The groups in this class share many properties with the original Basilical group, such as being torsion-free, just-non-solvable, etc.

\[\begin{array}{l}
a = (1, \ldots, 1, b),\\
b = (1, \ldots, 1, a)\sigma,\\
\end{array}\]
where $\sigma=(1,2,\ldots,p)$ represents the cycle in $\Sym(p)$.

It is also shown in~\cite[Theorem~3.8]{didomenico_fnt:p-basilica22} that these groups are contracting with the nucleus $\{1,a,b,a^{-1},b^{-1}\}$.

Experimental results for LBA attack for $\B_p$ for $p=3,7,11$ are recorded in Tables~\ref{table-3-basilica},~\ref{table-7-basilica} and~\ref{table-11-basilica} respectively.

We also mention another generalization of Basilica group defined and studied in~\cite{petschick_r:on_the_basilica_operation23}. Namely, given any self-similar group $G$ and $s\geq 1$ the authors define the group $\mathrm{Bas}_s(G)$ that preserves many properties of $G$, including being contracting.     

\hspace{1cm}
\begin{table}[h]
\centering
\begin{tabular}{|p{6.2cm}||c|c||c|c||c||c|}
    \hline
    
    Automaton group & \multicolumn{6}{|c|}{$3$-Basilica group} \\
    \hline
    \hline
    Reduced word length of conjugator $r$ & \multicolumn{2}{|c||}{10} & \multicolumn{2}{|c||}{20} & \multicolumn{1}{|c||}{30} & \multicolumn{1}{|c|}{100} \\
    \hline
    Radius $R$ of the search ball $\B_R$ & 2 &3 & 2 & 3 & 2  & 2\\
    \hline
    Percentage of success (\%) & 0.00 & 33.33 & 0.00 & 6.67 & 0.00 & 0.00 \\
    \hline
    GAP-seconds (average) & 6.8 & 82.5 & 7.2 & 1884.9 & 11.3  & 10.8  \\
    % GAP-seconds & 1878 & 152687 & 2403  & 1553  \\

    \hline 
\end{tabular}
\caption{Average time taken and success rate for 30 iterations each for different radii of search ball $\mathcal{B}_R$ and reduced word length of conjugator  for the $3$-Basilica group; $|a| = 10$ and $n = 5$ for all iterations.} 
\label{table-3-basilica}
\end{table}

\hspace{1cm}
\begin{table}[h]
\centering
\begin{tabular}{|p{6.2cm}||c|c||c|c||c||c|}
    \hline
    
    Automaton group & \multicolumn{6}{|c|}{$7$-Basilica group} \\
    \hline
    \hline
    Reduced word length of conjugator $r$ & \multicolumn{2}{|c||}{10} & \multicolumn{2}{|c||}{20} & \multicolumn{1}{|c||}{30} & \multicolumn{1}{|c|}{100} \\
    \hline
    Radius $R$ of the search ball $\B_R$ & 2 &3 & 2 & 3 & 2  & 2\\
    \hline
    Percentage of success (\%) & 0.00 & 23.3 & 0.00 & 3.3 & 0.00 & 0.00 \\
    \hline
    GAP-seconds (average) & 6.0 & 219.0 & 9.2 & 2311.0 & 11.6 & 9.7  \\

    \hline 
\end{tabular}
\caption{Average time taken and success rate for 30 iterations each for different radii of search ball $\mathcal{B}_R$ and reduced word length of conjugator  for the $7$-Basilica group; $|a| = 10$ and $n = 5$ for all iterations.} 
\label{table-7-basilica}
\end{table}

\hspace{1cm}
\begin{table}[h]
\centering
\begin{tabular}{|p{6.2cm}||c|c||c|c||c||c|}
    \hline
    
    Automaton group & \multicolumn{6}{|c|}{$11$-Basilica group} \\
    \hline
    \hline
    Reduced word length of conjugator $r$ & \multicolumn{2}{|c||}{10} & \multicolumn{2}{|c||}{20} & \multicolumn{1}{|c||}{30} & \multicolumn{1}{|c|}{100} \\
    \hline
    Radius $R$ of the search ball $\B_R$ & 2 &3 & 2 & 3 & 2  & 2\\
    \hline
    Percentage of success (\%) & 0.00 & 3.33 & 0.00 & 0.00 & 0.00 & 0.00 \\
    \hline
    GAP-seconds (average) & 1.6 & 20.8 & 4.2 & 59.9 & 8.2 & 37.6  \\

    \hline 
\end{tabular}
\caption{Average time taken and success rate for 30 iterations each for different radii of search ball $\mathcal{B}_R$ and reduced word length of conjugator  for the $11$-Basilica group; $|a| = 10$ and $n = 5$ for all iterations.} 
\label{table-11-basilica}
\end{table}

\subsection{Discussion of experimental results}
From the experimental results provided in subsections above we draw the following conclusions:
\begin{itemize}
\item The variant of LBA attack against SCSP described in Algorithm~\ref{alg:lba} is ineffective against all tested groups except virtually abelian. Generally, virtually abelian automaton groups are not recommended platforms and appear here only as negative controls in experiments.

\item The attack takes generally longer for groups acting on trees of larger degrees. However, this is not always the case as can be seen for Basilica and $p$-Basilica groups for $p=3,7,11$. 

\item There is often no essential time difference for computations with a fixed search radius  when the reduced word length of conjugator $r$ is increased. This is because in most cases iterations terminate with relatively little search for the conjugator since very few (if any) elements in $\B_R$ tend to decrease the value of the length function. However, the success rate of the attack generally decreases to 0 except for the virtually abelian groups.
 
\item  The time taken for LBA iterations to conclude significantly increases as the radius $R$ of search ball $\B_R$ is increased, which is expected since in the number of branches at each step of the LBA attack is equal to the size of $\B_R$. 

\end{itemize}

\subsection{Key generation and recommendation for security parameters}
In this subsection we discuss possible practical parameters for AAG cryptosystem based on contracting groups.

Suppose that a contracting group $G$ generated by set $S$ with nucleus $\N$ is selected as a platform group for AAG cryptosystem as described in Subsection~\ref{AAG}. To make analysis simpler, we will assume that $N_1=N_2=N$. 

First, Alice and Bob need to select their public keys $\overline a=(a_1,\ldots,a_N)\in G^N$ and $\overline b=(b_1,\ldots,b_N)\in G^N$. They can do it by randomly choosing reduced words by following a simple random walk in the group corresponding to the uniform measure concentrated on the generating set. There is usually no efficient way to find the shortest word representing the obtained element, though.  The length of a random element of $G$ obtained with a simple random walk of length $n$ depends on the speed of this random walk. This is an active area of study in the theory of random walks and, in the context of contracting groups, Tianyi Zheng  in a recent paper~\cite[Proposition~1.5]{zheng:randow_walks_on_grigorchuk_group25} proved asymptotic bounds for the speed of simple random walk $W_n$ on the Grigorchuk group $\G$: 
\[n^{\frac1{2\alpha_0}}\lesssim \mathbb E|W_n|\lesssim n^{\frac34},\] 
where $\alpha_0=\frac{\log 2}{\log 2/\eta}\approx 0.7674$, $\eta$ is the positive root of the polynomial $X^3+X^2+X-2$. Such bounds can give theoretical justification of how long should a random word be in order to achieve the desired length of an element.

However, we suggest to use the benefits of the contracting portraits to generate a key of desired size since by Corollary~\ref{cor:log_depth} the size of a portrait (proportional to its boundary size) of a word of length $n$ is bounded above by a polynomial in $n$ of degree that is equal to the level at which all pairwise products of nucleus elements contract to the nucleus. Moreover, empirical results in Figures~\ref{fig:grigorchuk_hist},~\ref{fig:basilica_hist}, and~\ref{fig:7basilica_hist} show that the actual boundary size of a portrait generated by a random word is significantly smaller than the length of this word. For example, in the case of random words of length 5,000 representing elements of Grigorchuk group, the average boundary size of the contracting portraits was only $145.99$ and the maximum was $169$ (the bottom of Figure~\ref{fig:grigorchuk_hist}). Moreover, the ratio $\frac{|w|}{s(g_w)}$ of the length of the word $w$ (which may be not a word of the shortest length) representing an element $g_w$ to the size of the portrait boundary of this element is increasing monotonically with $n$. This is generally an anticipated phenomenon since the expected distance of a random walk of length $n$ on the group from the identity, depending on a group, may be asymptotically negligible compared to $n$ as, for example, was shown by Zheng in the case of Grigorchuk group. Therefore, random words will represent elements that can be presented by much shorter words and thus have much smaller portraits. Thus contracting portraits represent much more memory efficient way to store group element and operate with them, compared to random words in generators. Finally, the size of the contracting portrait of an element gives a linear lower bound for the size of the word representing this element. Thus, unlike random words, the portraits can guarantee the desired complexity of an element.

Thus, to generate the keys, we suggest to sample random words in the generators of the group in such a way that the resulting contracting portrait will be of a desired size. The random generation can be done with any cryptographically secure pseudorandom number generator, such as Blum Blum Shub (B.B.S.) generator~\cite{blum_bs:pseudorandom_number_generator86}. We also refer the reader to the NIST recommendations~\cite{nist800-90a:random_generation}. Since both sampling of random words and contracting portrait computations are fast, it is very a very practical procedure. Once the words are obtained, Alice and Bob convert them into the portraits. The memory required to store the portrait is linear in the size of the portrait boundary by Corollary~\ref{cor:memory}. Note that we are unaware of any other easy way to take a random group element with the contracting portrait of a given size without generating random words of sufficiently large length.

Now, for the AAG cryptosystem, we estimate the memory required to store the private and public keys, and to transmit the information needed to establish the shared secret key. Of course, these parameters depend on the chosen group, so we provide some particular parameters for Grigorchuk group, Basilica group, and 7-Basilica group.

To estimate the bit-size of the public key, we specialize estimate~\eqref{eqn:portrait_size} for each of the mentioned groups. The values of the coefficient $\lceil\log_2|\N|\rceil+\frac{d(\lceil\log_2d\rceil+2)}{d-1}$ that converts the size of the portrait boundary to bits for four groups are given in Table~\ref{tab:coeff}. These values are used for empirical calculations given in Tables~\ref{tab:key_grig},~\ref{tab:key_basilica}, and~\ref{tab:key_7basilica} that show various security options and required transmission sizes for Grigorchuk group, Basilica group, and 7-Basilica group, respectively. Each of these tables has 4 data columns corresponding to 4 different combinations of private and public key sizes. 

Recall that the public key that Alice chooses is 
a tuple $\overline{a}(a_1,\ldots,a_N)\in G^N$. We propose to use $N=8$ for all cases and 2 options for the sizes of boundaries of $a_i$'s: 10 and 20. The bit-sizes of the public keys are computed according to coefficients from Table~\ref{tab:coeff}. For example, for Grigorchuk group 8 elements with boundary size 10 will take $8\cdot10\cdot 9=720$ bits.

The private key of Alice is a word $A=a_{s_1}^{\varepsilon_1} \cdots a_{s_L}^{\varepsilon_L}$ of length $L$, where $a_{s_i} \in \overline{a}$ and $\varepsilon_i \in \{\pm1\}$. Storing this word requires 
\begin{equation}
    \label{eqn:private_key_bits}
    L\cdot(\lceil\log N\rceil+1)
\end{equation} 
bits, where the term $\lceil\log N\rceil$ comes from the selection of $a_{s_i}\in\overline{a}$, and one additional bit is required to store $\varepsilon_i$. Note that while the portrait size of $A$ depends on the size of portraits of $a_i$'s in the public key, the quantity~\eqref{eqn:private_key_bits}, representing the security level of the private key, does not depend on these sizes. Based on empirical results on the ineffectiveness of the LBA attack against Grigorchuk and Basilica groups in Sebsections~\ref{ssec:grig_empirical},~\ref{ssec:basilica_empirical}, and~\ref{ssec:7basilica_empirical}, we propose 2 options for the (word) length $L$ of the private key: 32 and 64. According to expression~\ref{eqn:private_key_bits}, for $N=8$ these choices result in 128- and 256-bit private keys, respectively. We note that there is not enough information at this time on what security levels such a length of the private key corresponds to in general, or in the cases of specific groups, as the security level depends on existence of general attacks against the SCSP in the class of contracting groups, which were not yet proposed or studied. Thus, the length of the private key represents only the upper bound on the security level of the AAG system.

Tables~\ref{tab:key_grig},~\ref{tab:key_basilica}, and~\ref{tab:key_7basilica} show also the average (of 10 runs) of one-way transmission size and the shared secret key size in the number of leaves of corresponding portraits and in bits. This information is useful for bandwidth selection and choosing appropriate applications of the scheme. Recall that in order to establish a shared secret key, Alice sends to Bob contracting portraits of $b'_i=A^{-1}b_iA$ for all $b_i$ in Bob's public key $\overline{b}=(b_1,\ldots,b_N)$ and Bob does the symmetric operation. The average sum of the boundary sizes of $b_i'$ and the corresponding bit-size are given in rows 5 and 6 of the tables. Finally, the average boundary size and the bit-size of the shared secret key are given in the last 2 rows of the tables.

For this analysis we did not chose other values for the number $N$ of elements in the public keys as it will only slightly affect the size of the private key (due to $\log N$ term in~\eqref{eqn:private_key_bits}) and will not affect the transmission size and the shared secret key size. However, it may  have an impact on the general security of the system since it changes the number of conjugate pairs for the SCSP, which can be important for some of the potential attacks.

\begin{table}[]
\centering
\begin{tabular}{|l|c|c|c|}
\hline
 \cellcolor[HTML]{DDDDDD}~& \cellcolor[HTML]{FFFFC7}$d$ 
 & \cellcolor[HTML]{FFFFC7}$|\mathcal{N}|$ 
 & \cellcolor[HTML]{FFFFC7}$\left\lceil\log_2|\mathcal{N}|\right\rceil+\frac{d(\left\lceil\log_2 d\right\rceil+2)}{d-1}$\rule{0pt}{3ex} 
\\[1mm] \hline
\multicolumn{1}{|l|}{\cellcolor[HTML]{FFFFC7}Grigorchuk group} & 2 & 5 & 9 \\ \hline
\multicolumn{1}{|l|}{\cellcolor[HTML]{FFFFC7}Basilica group}   & 2 & 7 & 9 \\ \hline \multicolumn{1}{|l|}{\cellcolor[HTML]{FFFFC7}7-Basilica group\rule{0pt}{2.7ex}} & 7 & 7 & $\frac{53}{6}\approx 8.83$ \\[1mm] \hline
\multicolumn{1}{|l|}{\cellcolor[HTML]{FFFFC7}11-Basilica group\rule{0pt}{2.7ex}} & 11 & 7 & $\frac{48}{5}=9.6$ \\[1mm] \hline
\end{tabular}
\caption{Conversion constants from the portrait boundary size to bits\label{tab:coeff}}
\end{table}

\begin{center}
\begin{table}[]
\centering
\begin{tabular}{|
>{\columncolor[HTML]{FFFFC7}}l |l|l|l|l|}
\hline
\textbf{Private key length $L$}    & 32    & 32     & 64    & 64     \\ \hline
\textbf{Private key size (bits)}    & 128    & 128     & 256    & 256     \\ \hline
\textbf{Boundary size of portraits in public key}     & 10    & 20    & 10    & 20    \\ \hline
\textbf{Public key size (bits)}     & 720    & 1440    & 720    & 1440    \\ \hline
\textbf{Average transmission size (leaves)} & 638.9  & 1165.1  & 890.5  & 1710.4  \\ \hline
\textbf{Average transmission size (bits)}   & 5750.1 & 10485.9 & 8014.5 & 15393.6 \\ \hline
\textbf{Average shared secret key size (leaves)}   & 114.2  & 213.2   & 163.1  & 295.5   \\ \hline
\textbf{Average shared secret key size (bits)}     & 1027.8 & 1918.8  & 1467.9 & 2659.5  \\ \hline
\end{tabular}
\caption{Keys and transmission sizes of AAG cryptosystem based on Grigorchuk group\label{tab:key_grig}}
\end{table}
\end{center}

\begin{center}
\begin{table}[]
\centering
\begin{tabular}{|
>{\columncolor[HTML]{FFFFC7}}l |l|l|l|l|}
\hline
\textbf{Private key length $L$}    & 32    & 32     & 64    & 64     \\ \hline
\textbf{Private key size (bits)}    & 128    & 128     & 256    & 256     \\ \hline
\textbf{Boundary size of portraits in public key}     & 10    & 20    & 10    & 20    \\ \hline
\textbf{Public key size (bits)}     & 720    & 1440    & 720    & 1440    \\ \hline
\textbf{Average transmission size (leaves)} & 1144.4  & 2294.5  & 1660.5  & 3617.6  \\ \hline
\textbf{Average transmission size (bits)}   & 10299.6 & 20650.5 & 14944.5 & 32558.4 \\ \hline
\textbf{Average shared secret key size (leaves)}   & 226.9   & 458.1   & 372.8   & 731     \\ \hline
\textbf{Average shared secret key size (bits)}     & 2042.1  & 4122.9  & 3355.2  & 6579    \\ \hline
\end{tabular}
\caption{Keys and transmission sizes of AAG cryptosystem based on Basilica group\label{tab:key_basilica}}
\end{table}
\end{center}

\begin{center}
\begin{table}[]
\centering
\begin{tabular}{|
>{\columncolor[HTML]{FFFFC7}}l |l|l|l|l|}
\hline
\textbf{Private key length $L$}    & 32    & 32     & 64    & 64     \\ \hline
\textbf{Private key size (bits)}    & 128    & 128     & 256    & 256     \\ \hline
\textbf{Boundary size of portraits in public key}     & 10    & 20    & 10    & 20    \\ \hline
\textbf{Public key size (bits)}     & 707    & 1414    & 707    & 1414    \\ \hline
\textbf{Average transmission size (leaves)} & 1239.90   & 2345     & 1825.30   & 3673.70   \\ \hline
\textbf{Average transmission size (bits)}   & 10952.45 & 20714.17 & 16123.48 & 32451.02 \\ \hline
\textbf{Average shared secret key size (leaves)}   & 239      & 464.60    & 379.50    & 728.20    \\ \hline
\textbf{Average shared secret key size (bits)}     & 2111.17 & 4103.97 & 3352.25  & 6432.43 \\ \hline\end{tabular}
\caption{Keys and transmission sizes of AAG cryptosystem based on 7-Basilica group\label{tab:key_7basilica}}
\end{table}
\end{center}

\subsection{Cryptanalysis challenge}
We have provided some challenge problems in the GitHub repository~\cite{kahrobaei_ms:lba_github}. The problems provide the publicly available information that is used to establish the AAG-key and asks to compute the private key or the shared secret key. The publicly available information comprises of the public keys of Alice and Bob, and the conjugated elements transmitted over the open channel by Alice and Bob as described in Subsection~\ref{AAG}. The platform groups used for these problems are $7$-Basilica, $11$-Basilica and Universal Grigorchuk groups.
\vspace{5mm}

\noindent\textbf{Acknowledgement.} The authors would like to express sincere gratitude to Rostislav Grigorchuk for fruitful discussions that helped to improve the paper. The first author conducted this work partially with the support of ONR Grant 62909-24-1-2002. The first author thanks Institut des Hautes \'Etudes Scientifiques - IHES for providing stimulating environment while this project was partially done. The third author was partially supported by NSF grant DMS-2342254. The second and the third authors greatly acknowledge the support of American Institute of Mathematics (AIM). Part of this work was done in June 2024 during the workshop ``Groups of dynamical origin'' funded by AIM. All authors greatly appreciate a meticulous work of anonymous referees that helped to improve exposition of the paper.

The authors acknowledge the support from
the Institut Henri Poincare (UAR 839 CNRS-Sorbonne Universite) and LabEx CARMIN (ANR-10-LABX-59-01).

%%%%%%%%%%%%%%%%%%%%%%% BIBLIOGRAPHY & END %%%%%%%%%%%%%%%%%%%%%%
\newcommand{\etalchar}[1]{$^{#1}$}
\def\cprime{$'$} \def\cydot{\leavevmode\raise.4ex\hbox{.}} \def\cprime{$'$} \def\cprime{$'$} \def\cprime{$'$} \def\cprime{$'$} \def\cprime{$'$} \def\cprime{$'$} \def\cprime{$'$} \def\cprime{$'$} \def\cprime{$'$} \def\cprime{$'$} \def\cprime{$'$} \def\cprime{$'$} \def\cprime{$'$} \def\cprime{$'$}

\end{document}